\newtheorem{thm}{Theorem}[section]
\newtheorem{lem}[thm]{Lemma}
\newtheorem{pro}[thm]{Proposition}
\newtheorem{cor}[thm]{Corollary}
\newtheorem{conj}[thm]{Conjecture}
\theoremstyle{remark} 
\newtheorem{rem}[thm]{Remark}
\theoremstyle{definition}
\newcommand{\GL}{\mathrm{GL}}
\newcommand{\Gal}{\mathrm{Gal}}
\newcommand{\blegendre}[2]{\left (\frac{#1}{#2}\right)}
\newcommand{\Frob}[0]{\mathrm{Frob}}
\newcommand{\ord}[0]{\mathrm{ord}}
\DeclareSymbolFont{cyrletters}{OT2}{wncyr}{m}{n}
\DeclareMathSymbol{\Sha}{\mathalpha}{cyrletters}{"58}
\begin{document}

\pagestyle{plain}
\setcounter{page}{1}

\title{Character sums for elliptic curve densities}
\author{Julio Brau}

\begin{abstract}
\noindent If $E$ is an elliptic curve over $\mathbb{Q}$, then it follows from work of Serre and Hooley that, under the assumption of the Generalized Riemann Hypothesis, the density of primes $p$ such that the group of $\mathbb{F}_p$-rational points of the reduced curve $\tilde{E}(\mathbb{F}_p)$ is cyclic can be written as an infinite product $\prod \delta_\ell$ of local factors $\delta_\ell$ reflecting the degree of the $\ell$-torsion fields, multiplied by a factor that corrects for the entanglements between the various torsion fields. We show that this correction factor can be interpreted as a character sum, and the resulting description allows us to easily determine non-vanishing criteria for it. We apply this method in a variety of other settings. Among these, we consider the aforementioned problem with the additional condition that the primes $p$ lie in a given arithmetic progression. We also study the conjectural constants appearing in Koblitz's conjecture, a conjecture which relates to the density of primes $p$ for which the cardinality of the group of $\mathbb{F}_p$-points of $E$ is prime. \end{abstract}

\maketitle

\section{Introduction}
The motivation for this paper comes from the classical conjecture of Artin from 1927 which predicts the density of primes $p$ for which a given rational number is a primitive root modulo $p$. More precisely, let $g$ be an integer different from $\pm 1$, and let $h$ be the largest integer such that $g=g_0^h$ with $g_0\in \mathbb{Z}$. The heuristic reasoning described by Artin was the following. If $p$ is a prime number coprime to $g$, then $g$ is a primitive root modulo $p$ if and only if there is no prime $\ell$ dividing $p-1$ such that $g\equiv y^\ell \pmod{p}$ for some $y$. Note that this congruence condition can be given as a splitting condition on the prime $p$ in the field $F_\ell:=\mathbb{Q}(\zeta_\ell, \sqrt[\ell]{g})$. Indeed, the condition on $p$ is equivalent to $p$ not splitting completely in the aforementioned field. In other words, $g$ is a primitive root modulo $p$ if and only if for every prime $\ell<p$ we have that $\Frob_p$ is not the identity element in $\Gal(F_\ell/\mathbb{Q})$.

For a fixed $\ell$, the density of primes which do not split completely in $F_\ell$ is equal to 
$$
\delta_\ell := 1-\frac{1}{[F_\ell : \mathbb{Q}]},
$$
and this equals $1-\frac{1}{\ell-1}$ for $\ell \mid h$ and $1-\frac{1}{\ell(\ell-1)}$ otherwise. If we assume the splitting conditions in the various fields $F_\ell$ to be independent, then it is reasonable to expect that the density of primes $p$ for which $g$ is a primitive root modulo $p$ is equal to $\prod_\ell \delta_\ell$. This was the density originally conjectured by Artin, however years later (see \cite{StevenhagenCF}) he himself noticed that this assumption of independence is not correct, as the fields $F_\ell$ can have non-trivial intersections. If $F_2 = \mathbb{Q}(\sqrt{g})$ has discriminant $D\equiv 1\pmod{4}$, then $F_2$ is contained in the compositum of the fields $F_\ell$ with $\ell \mid D$. The corrected version of the conjecture was proven by Hooley under the assumption of the Generalized Riemann Hypothesis (GRH). He showed in \cite{Hooley} that, conditional on GRH, the density of primes such that $g$ is a primitive root modulo $p$ equals
\begin{equation}\label{HooleySum}
C_g = \sum_{n=1}^\infty \frac{\mu(n)}{[F_n:\mathbb{Q}]}
\end{equation}
where $F_n = \mathbb{Q}(\zeta_n,\sqrt[n]{g})$ and $\mu$ is the M\"{o}bius function. In the same paper Hooley shows that (\ref{HooleySum}) can be rewritten as
\begin{equation}\label{HooleyProduct}
C_g = \mathfrak{C}_g \prod_{\ell \mid h} \Big( 1- \frac{1}{\ell-1} \Big) \prod_{\ell \nmid h} \Big( 1- \frac{1}{\ell(\ell-1)} \Big),
\end{equation}
where $\mathfrak{C}_g$ is an \emph{entanglement correction factor}, a rational number which depends on $g$. In fact it is given explicitly by
$$
\mathfrak{C}_g:=1-\prod_{\substack{\ell \mid D \\ \ell \mid h}} \frac{-1}{\ell-2} \cdot \prod_{\substack{\ell \mid D \\ \ell \nmid h}} \frac{-1}{\ell^2-\ell-1}.
$$
One advantage of having $C_g$ in the form given by (\ref{HooleyProduct}) is that it makes it easy to see when the density $C_g$ vanishes. Vanishing of $C_g$ implies that, conjecturally, there exist only finitely many primes $p$ such that $g$ is a primitive root modulo $p$, and the multiplicative structure of $C_g$ and $\mathfrak{C}_g$ allows one to identify precisely when and why this can happen.

There are many interesting generalisations to Artin's conjecture on primitive roots. For instance, one could consider only primes $p$ which lie in a prescribed congruence class modulo some integer $f$. One could also study the set of primes $p$ such that $g$ generates a subgroup of a given index in $(\mathbb{Z}/p \mathbb{Z})^\times$. As is shown in \cite{Lenstra}, in both of these cases one can again obtain a density under GRH via a formula similar to (\ref{HooleySum}). However, it is not clear how to describe the non-vanishing criteria of such densities from such a sum. 

In \cite{LMP}, the authors develop an efficient method to compute entanglement correction factors $\mathfrak{C}_g$ for Artin's original conjecture and several of its generalisations. Their method consists in expressing $\mathfrak{C}_g$ as a sum of quadratic characters. More precisely, they show that $\mathfrak{C}_g$ has the form
$$
\mathfrak{C}_g = 1 + \prod_{\ell} E_\ell
$$
where each $E_\ell$ is the average value of a character $\chi_\ell$ over an explicit set. One crucial fact used to arrive at this form is that when $D\equiv 1 \pmod{4}$, then for $n$ divisible by $2D$ we have that the subgroup
$$
\Gal(F_n/\mathbb{Q}) \hookrightarrow \prod_{\ell \mid n} \Gal(F_\ell /\mathbb{Q})
$$
is cut out by a quadratic character $\chi$ measuring the nature of the intersections of the fields $F_\ell$. The structure of $C_g$ as an Euler product and the description of $\mathfrak{C}_g$ naturally lead to non-vanishing criteria. 

In this paper we attempt to generalize this method to the setting of elliptic curves. There are many problems concerning the study of the set of primes $p$ such that the reduced curve $\tilde{E}(\mathbb{F}_p)$ satisfies a certain condition. One of these arises as a natural analogue of Artin's conjecture on primitive roots. Namely, given an elliptic curve $E$ over $\mathbb{Q}$, the problem is to determine the density of primes $p$ such that $\tilde{E}(\mathbb{F}_p)$ is cyclic. The first thing to note is that the condition of $\tilde{E}(\mathbb{F}_p)$ being cyclic is completely determined by the splitting behaviour of $p$ in the various torsion fields $\mathbb{Q}(E[\ell])$ for different $\ell$. Given this, we can proceed similarly by defining local densities $\delta_\ell$ and attempting to find the entanglement correction factor $\mathfrak{C}_E$, however one quickly runs into various difficulties which were not present in the case of classical Artin. One of these is that it is not necessarily true that $\Gal(\mathbb{Q}(E[m])/\mathbb{Q}) \hookrightarrow \prod_{\ell\mid m} \Gal(\mathbb{Q}(E[\ell])/\mathbb{Q})$ is a normal subgroup and even if so, the quotient need not be $\{\pm 1\}$ or even abelian for that matter.

This leads us to the study in Section \ref{AbEntang} of \emph{abelian entanglements}. If $G$ is a subgroup of $G_1\times\dots\times G_n$ such that the projection maps $\pi_i:G\rightarrow G_i$ are surjective for $1\leqslant i \leqslant n$, then we show that $G$ is normal in $G_1\times \dots\times G_n$ with abelian quotient if and only if $G$ has abelian entanglements.

In Section \ref{CurvesAbEntang} we define elliptic curves with abelian entanglements to be those elliptic curves with the property that $G(m_E)$ has abelian entanglements in the sense of Section \ref{AbEntang}. We show that this definition is equivalent to $\mathbb{Q}(E[m_1]) \cap \mathbb{Q}(E[m_2])$ being an abelian extension of $\mathbb{Q}$ for every coprime $m_1,m_2$. It is for this class of curves that we will be able to apply our character sum method, with Theorem \ref{ThmGoodFrobeniusCalc} being a crucial ingredient.

Section \ref{CyclicRedEC} applies Theorem \ref{ThmGoodFrobeniusCalc} to the aforementioned problem of cyclic reduction of elliptic curves. We explicitly evaluate the density $C_E$ as an Euler product $\prod_\ell \delta_\ell $ times an entanglement correction factor $\mathfrak{C}_E$. We then compute $\mathfrak{C}_E$ in the case of Serre curves and give examples of a few other elliptic curves with more complicated Galois Theory, as well as establishing non-vanishing criteria for these conjectural densities.

In Section \ref{CyclicRedAP} we study a variant of the problem of cyclic reduction on elliptic curves. Namely, we impose the additional condition that $p$ lie in a prescribed congruence class modulo some integer $f$. This introduces new difficulties as the splitting conditions on $p$ become more complicated, but it also illustrates the way in which our method can be used to handle a variety of different scenarios. In the end the computation of $\mathfrak{C}_E$ is again reduced to fairly mechanical local computations. Again Serre curves and several other examples are treated in detail. 

Section \ref{KobConj} we study a different type of problem. We look at a classical conjecture of Koblitz on the asymptotic behaviour of the number of primes $p$ for which the cardinality of $\tilde{E}(\mathbb{F}_p)$ is prime. We see that the character sum approach can also be applied to describe the constant appearing in this asymptotic. In this case there are not even conditional results, and the constant computed is purely conjectural. However the constant we compute has previously been described via different methods by Zywina in \cite{ZywKob}, where he provides some convincing numerical evidence for it.

The study of conjectural constants led us to investigate the class of elliptic curves with abelian entanglements, and naturally leads to the question of whether there exist elliptic curves whose entanglements are not all abelian. To be precise, can one classify the triples $(E,m_1,m_2)$ with $E$ an elliptic curve over $\mathbb{Q}$ and $m_1,m_2$ a pair of coprime integers for which the entanglement field $\mathbb{Q}(E[m_1]) \cap \mathbb{Q}(E[m_2])$ is non-abelian over $\mathbb{Q}$?  In Section \ref{NonAbEntang} we exhibit an infinite family of elliptic curves for which this is the case.

\vspace{5 mm}

\noindent \textbf{Acknowledgements.} I would like to thank Peter Stevenhagen for the discussion that originally led to this work, as well as Hendrik Lenstra and Nathan Jones for many helpful conversations. While writing this paper I was partially supported by EPSRC grant EP/M016838/1 as well as Leiden University.

\section{Abelian entanglements}\label{AbEntang}
In this section we define the property of having abelian entanglements and study some of its consequences. We will first give some useful preliminaries on fibered products of groups.

\subsection{Fibered products of groups}

Let $G_1$, $G_2$  and $Q$ be groups, $\psi_1:G_1\rightarrow Q$, $\psi_2:G_2\rightarrow Q$ be surjective homomorphisms, and let $\psi$ denote the abbreviation for the ordered pair $(\psi_1,\psi_2)$. We define the \emph{fibered product} of  $G_1$ and $G_2$ over $\psi$, denoted $G_1 \times_\psi G_2$, to be the group
\begin{equation}\label{FibProd}
G_1 \times_\psi G_2 := \{(g_1,g_2) \in G_1 \times G_2 : \psi_1(g_1) = \psi_2(g_2) \}
\end{equation}
Note that $G_1 \times_\psi G_2$ is a subdirect product of $G_1$ and $G_2$, that is, it is a subgroup of $G_1\times G_2$ which maps surjectively onto $G_1$ and $G_2$ under the canonical projection homomorphisms. The following is a well-known lemma which tells us that the converse of this also holds. We present a proof here for completeness and because it will be useful later on in this section.

\begin{lem}[Goursat's Lemma]\label{Goursat}
Let $G_1$ and $G_2$ be groups and let $G\subseteq G_1\times G_2$ be a subgroup such that the projections $\pi_1:G\rightarrow G_1$ and $\pi_2:G\rightarrow G_2$ are surjective. Then there exists a group $Q$ and surjective homomorphisms $\psi_1:G_1\rightarrow Q$, $\psi_2:G_2\rightarrow Q$ such that $G=G_1\times_\psi G_2$. That is,
$$
G = \{(g_1,g_2) \in G_1 \times G_2 : \psi_1(g_1) = \psi_2(g_2) \}.
$$
\end{lem}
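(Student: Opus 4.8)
The plan is to recover the quotient group $Q$ and the maps $\psi_1,\psi_2$ intrinsically from $G$, by extracting the ``kernels'' that any fibered-product description of $G$ would force upon the hypothetical surjections. Concretely, I would set
$$
N_1 := \{g_1 \in G_1 : (g_1,e) \in G\}, \qquad N_2 := \{g_2 \in G_2 : (e,g_2)\in G\}.
$$
The first step is to check that $N_1 \trianglelefteq G_1$ and $N_2 \trianglelefteq G_2$. That each is a subgroup is immediate; for normality of $N_1$ I would use the surjectivity of $\pi_1$: given $g_1\in G_1$, pick $h_2$ with $(g_1,h_2)\in G$, and then for $n_1\in N_1$ observe that $(g_1,h_2)(n_1,e)(g_1,h_2)^{-1}=(g_1 n_1 g_1^{-1},e)\in G$, so $g_1 n_1 g_1^{-1}\in N_1$. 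An identical argument handles $N_2$.

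The heart of the argument — and the step I expect to be the main obstacle — is to produce a canonical isomorphism between $G_1/N_1$ and $G_2/N_2$. I would define a map $\phi: G_1 \to G_2/N_2$ by choosing, for each $g_1$, some $g_2$ with $(g_1,g_2)\in G$ (possible by surjectivity of $\pi_1$) and setting $\phi(g_1) = g_2 N_2$. The delicate points are: (i) well-definedness, which holds because two valid choices $g_2,g_2'$ satisfy $(e,g_2^{-1}g_2')\in G$, hence $g_2^{-1}g_2'\in N_2$; (ii) that $\phi$ is a homomorphism, immediate from closure of $G$ under multiplication; (iii) surjectivity, from surjectivity of $\pi_2$; and (iv) that $\ker\phi = N_1$, where the nontrivial inclusion uses that $\phi(g_1)=N_2$ gives some $(g_1,g_2)\in G$ with $g_2\in N_2$, whence $(g_1,e)\in G$. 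Granting these, the first isomorphism theorem yields an isomorphism $\bar\phi: G_1/N_1 \xrightarrow{\sim} G_2/N_2$.

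With $\bar\phi$ in hand I would set $Q := G_1/N_1$, take $\psi_1:G_1\to Q$ to be the canonical projection, and let $\psi_2:G_2\to Q$ be the composite $G_2 \to G_2/N_2 \xrightarrow{\bar\phi^{-1}} G_1/N_1 = Q$; both are surjective. It then remains to verify the set equality $G = \{(g_1,g_2) : \psi_1(g_1)=\psi_2(g_2)\}$. The inclusion ``$\subseteq$'' is just the definition of $\phi$ read through $\bar\phi$, since $(g_1,g_2)\in G$ forces $\bar\phi(g_1 N_1)=g_2 N_2$. For ``$\supseteq$'', given $(g_1,g_2)$ with $\psi_1(g_1)=\psi_2(g_2)$, I would choose any $(g_1,g_2^0)\in G$, note the condition forces $g_2 N_2 = g_2^0 N_2$, so $(e,(g_2^0)^{-1}g_2)\in G$, and then compute $(g_1,g_2^0)\cdot(e,(g_2^0)^{-1}g_2)=(g_1,g_2)\in G$. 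This closes the argument. The only genuine subtlety is the construction and well-definedness of $\bar\phi$; everything else is a routine diagram chase driven by the surjectivity of the two projections.
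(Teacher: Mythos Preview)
Your proof is correct and follows essentially the same route as the paper's: both define the same subgroups $N_1,N_2$ (the paper writes them as $(G_i\times\{1\})\cap G$ and then identifies $\pi_i(N_i)$), establish normality via the identical conjugation argument using surjectivity of the projections, and build the isomorphism $G_1/N_1\cong G_2/N_2$ from the observation that $(g_1,g_2)\in G$ determines each coset from the other. Your write-up is in fact more explicit than the paper's, which stops at ``the image of $f$ is the graph of a well-defined isomorphism'' and declares the result follows, whereas you spell out both inclusions of $G = G_1\times_\psi G_2$; the only cosmetic difference is that the paper takes $Q=G_2/N_2$ while you take $Q=G_1/N_1$.
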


\begin{proof}
 Let $N_1 = (G_1 \times \{1\})\cap G$ and $N_2 = (\{1\} \times G_2)\cap G$, where we use $1$ to denote the identity elements of both $G_1$ and $G_2$. Then $N_1 = \ker \pi_2$ and $N_2 = \ker \pi_1$. We now show that $\pi_1(N_1) \unlhd G_1$ and $\pi_2(N_2) \unlhd G_2$. Note that $N_1 \unlhd G$ as it is the kernel of $\pi_2$, and let $g_1\in G_1$. Then as $\pi_1:G\rightarrow G_1$ is surjective, there exists $g_2\in G_2$ such that $g:=(g_1,g_2) \in G$. It follows that
$$
g_1 \pi_1(N_1) = \pi_1(g)\pi_1(N_1) = \pi_1(gN_1) = \pi_1(N_1g) = \pi_1(N_1)\pi_1(g) = \pi_1(N_1)g_1
$$
so $\pi_1(N_1) \unlhd G_1$ as claimed. 	Similarly we have $\pi_2(N_2) \unlhd G_2$. Note that $\pi_i(N_i) \simeq N_i$ and hence $(G_i\times \{1\}) / N_i \simeq G_i/N_i$. Consider the map $f:G\rightarrow G_1/N_1 \times G_2/N_2$ defined by $(g_1,g_2) \mapsto (g_1N_1, g_2N_2)$ where we have written $N_i$ in place of $\pi_i(N_i)$. One can easily check that for $(g_1,g_2)\in G$ one has
$$
g_1N_1 = N_1 \Longleftrightarrow g_2N_2 = N_2
$$
hence the image of $f$ is the graph of a well-defined isomorphism $G_1/N_1 \xrightarrow{\sim} G_2/N_2$. The result now follows from setting $Q:=G_2/N_2$.
\end{proof}
We will refer to the $N_i$ in the proof as \emph{Goursat subgroups} and to $Q$ as the \emph{Goursat quotient} associated to this fibered product. 

Suppose now that $L_1/K, L_2/K$ are Galois extensions of fields, with $G_i = \Gal(L_i/K)$ and $G = \Gal(L_1L_2/K)$, where $L_1L_2$ denotes the compositum of $L_1$ and $L_2$. Then it is well known from Galois theory that 
$$
G = \{(g_1,g_2) \in G_1\times G_2 : g_1 \mid_{L_1\cap L_2} = g_2 \mid_{L_1\cap L_2}\} \leqslant G_1\times G_2.
$$

\begin{lem}\label{GoursatGalois}
Keeping the above notation, we have that
$$
G = G_1 \times_{\psi} G_2 
$$
with $\psi_i: G_i \rightarrow \Gal(L_1\cap L_2 / K)$ the canonical restriction maps.
\end{lem}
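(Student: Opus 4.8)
The plan is to identify the Galois-theoretic description of $G$ with the abstract fibered product construction by exhibiting the common Goursat quotient explicitly as $\Gal(L_1\cap L_2/K)$. The starting point is the well-known Galois-theoretic fact quoted just before the lemma, namely that
$$
G = \{(g_1,g_2) \in G_1\times G_2 : g_1\mid_{L_1\cap L_2} = g_2\mid_{L_1\cap L_2}\},
$$
so the task is really to verify that the two restriction maps $\psi_i:G_i\to\Gal(L_1\cap L_2/K)$ are surjective homomorphisms and that the equality $\psi_1(g_1)=\psi_2(g_2)$ is exactly the condition cutting out $G$ inside $G_1\times G_2$.

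First I would observe that each $\psi_i$ is well defined and a homomorphism: since $L_1\cap L_2$ is a subextension of $L_i/K$ that is Galois over $K$ (being the intersection of two Galois extensions, hence stable under $\Gal(\overline{K}/K)$), restriction of an automorphism of $L_i$ to $L_1\cap L_2$ yields an automorphism of $L_1\cap L_2$, and restriction is multiplicative. Next I would check surjectivity of $\psi_i$: this is the standard statement that the restriction map $\Gal(L_i/K)\to\Gal(L_1\cap L_2/K)$ is surjective because $L_1\cap L_2\subseteq L_i$ with both Galois over $K$. With $\psi_1,\psi_2$ surjective homomorphisms onto the common group $Q:=\Gal(L_1\cap L_2/K)$, the abstract definition \eqref{FibProd} gives
$$
G_1\times_\psi G_2 = \{(g_1,g_2)\in G_1\times G_2 : \psi_1(g_1)=\psi_2(g_2)\}.
$$

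Finally I would match the two conditions: by definition $\psi_i(g_i)=g_i\mid_{L_1\cap L_2}$, so the defining condition $\psi_1(g_1)=\psi_2(g_2)$ of $G_1\times_\psi G_2$ is literally $g_1\mid_{L_1\cap L_2}=g_2\mid_{L_1\cap L_2}$, which is precisely the condition characterizing $G$ in the displayed Galois-theoretic description. Hence $G=G_1\times_\psi G_2$, as claimed.

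I do not expect any serious obstacle here: the result is essentially a repackaging of standard Galois theory, and the only point requiring a moment of care is the surjectivity of the restriction maps $\psi_i$ together with the fact that $L_1\cap L_2$ is itself Galois over $K$. These are exactly the hypotheses that make the intersection field the correct candidate for the Goursat quotient $Q$, so the proof amounts to checking that the concrete objects coming from Galois theory satisfy the abstract definition of the fibered product rather than invoking Goursat's Lemma abstractly. One could alternatively derive the statement from Lemma \ref{Goursat} by identifying the Goursat subgroups $N_i=\ker\psi_i$ with $\Gal(L_i/(L_1\cap L_2))$ and observing that the resulting Goursat quotient $G_i/N_i\cong\Gal(L_1\cap L_2/K)$ is independent of $i$, but the direct verification above is cleaner.
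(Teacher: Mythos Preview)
Your proof is correct. The paper's own proof takes the alternative route you mention at the end: it invokes the proof of Goursat's Lemma, identifies $\pi_1(N_1)$ with the subgroup of $G_1$ acting trivially on $L_1\cap L_2$ (i.e.\ $\Gal(L_1/(L_1\cap L_2))$), and concludes that the Goursat quotient is $\Gal(L_1\cap L_2/K)$. Your direct verification---checking that the restriction maps $\psi_i$ are surjective onto $\Gal(L_1\cap L_2/K)$ and that the defining condition of the fibered product is literally the compatibility-of-restrictions condition already displayed before the lemma---bypasses the Goursat machinery entirely and is arguably cleaner, since it avoids reopening the proof of Lemma~\ref{Goursat}. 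The paper's approach, on the other hand, has the virtue of making explicit that the $\psi_i$ constructed here really are the same maps produced abstractly by Goursat, which is the point the lemma is ultimately recording.
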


\begin{proof}
From the proof of Goursat's lemma, $N_1 = (G_1 \times \{1\})\cap G$ and $\pi_1(N_1)$ is the subgroup of $G_1$ which acts trivially on $L_1\cap L_2$, and the result follows.
\end{proof}

\subsection{Groups with abelian entanglements}
Suppose $G$ is a subgroup of $G_1\times\dots\times G_n$ such that the projection maps $\pi_i:G\rightarrow G_i$ are surjective for $1\leqslant i \leqslant n$. We will concern ourselves here with the situation when $G$ is normal in $G_1\times \dots\times G_n$ with abelian quotient.

For a group $G$, we will denote by $G'$ the commutator subgroup of $G$, and for $x,y\in G$, $[x,y]=x^{-1}y^{-1}xy$ will denote the commutator of $x$ and $y$.  For a non-empty subset $S\subset\{1,\dots, n\}$ we write $\pi_S$ for the projection map
$$
\pi_S:G_1\times\dots\times G_n \longrightarrow \prod_{i\in S} G_i
$$
and let $G_S$ denote the image of $G$ under this projection map. Note that for each partition $\sqcup_j T_j = \{1,\dots , n\}$ we have a canonical inclusion
$$
G\xhookrightarrow{\phantom{hello}} \prod_j G_{T_j}.
$$

Let $\mathcal{P}:=\{S,T\}$ be a partition of $\{1,\dots, n\}$, so that $S\sqcup T = \{1,\dots , n\}$. Then $G$ is a subdirect product of $G_S \times G_T$ so by Goursat's lemma there is a group $Q_{\mathcal{P}}$ and a pair of homomorphisms $\psi_{\mathcal{P}} := (\psi_{\mathcal{P}}^{(1)},\psi_{\mathcal{P}}^{(2)})$ with
\begin{align*}
\psi_{\mathcal{P}}^{(1)} \colon &G_S \longrightarrow Q_{\mathcal{P}} \\
\psi_{\mathcal{P}}^{(2)} \colon &G_T \longrightarrow Q_{\mathcal{P}} 
\end{align*}
such that $G = G_S \times_{\psi_{\mathcal{P}}} G_T$. We say that $G$ has \emph{abelian entanglements with respect to $G_1\times\dots\times G_n$} if $Q_{\mathcal{P}}$ is abelian for each two-set partition $\mathcal{P}$ of $\{1,\dots ,n\}$. We will often write only that $G$ has abelian entanglements, omitting with respect to which direct product of groups if this is clear from the context.

\begin{pro}\label{entanglementsProp}
Let $G$ be a subgroup of $G_1\times\dots\times G_n$ such that the projection maps $\pi_i:G\rightarrow G_i$ are surjective for $1\leqslant i \leqslant n$. Keeping the notation as above, $G$ is a normal subgroup of $G_1 \times \dots \times G_n$ if and only if $G$ has abelian entanglements.
\end{pro}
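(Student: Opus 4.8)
The plan is to show that both the normality of $G$ in $H:=G_1\times\dots\times G_n$ and the property of having abelian entanglements are equivalent to the single symmetric condition that $G$ contains the commutator subgroup $H'=G_1'\times\dots\times G_n'$. Establishing this \emph{commutator criterion} is the heart of the argument; once it is in place, both implications of the proposition follow quickly.

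First I would prove that, for a subdirect product $G$, one has $G\unlhd H$ if and only if $G_1'\times\dots\times G_n'\subseteq G$. Since $H$ is generated by its coordinate subgroups $\iota_i(G_i)=\{1\}\times\dots\times G_i\times\dots\times\{1\}$, normality of $G$ is equivalent to $G$ being stable under conjugation by each $\iota_i(a)$ with $a\in G_i$. Writing $\epsilon_i(c)$ for the element of $H$ with $c$ in position $i$ and $1$ elsewhere, a direct computation shows that for $g=(g_1,\dots,g_n)\in G$ the conjugate $\iota_i(a)\,g\,\iota_i(a)^{-1}$ differs from $g$ precisely by the left factor $\epsilon_i(ag_ia^{-1}g_i^{-1})$. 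Hence $\iota_i(a)g\iota_i(a)^{-1}\in G$ for all such $a,g$ if and only if $\epsilon_i(ag_ia^{-1}g_i^{-1})\in G$. Using the surjectivity of $\pi_i$ to let $g_i=\pi_i(g)$ range over all of $G_i$, the elements $ag_ia^{-1}g_i^{-1}$ run through all commutators of $G_i$ and therefore generate $G_i'$; since $G$ is a subgroup, the condition is equivalent to $\epsilon_i(G_i')\subseteq G$. Collecting these over all $i$ gives exactly $G_1'\times\dots\times G_n'\subseteq G$.

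Next I would deduce the two directions. For each two-set partition $\mathcal P=\{S,T\}$, recall from Lemma \ref{Goursat} and its proof that $Q_{\mathcal P}\cong G_S/K_S$, where $K_S:=\{s\in G_S:(s,1)\in G\}$ is the image in $G_S$ of the elements of $G$ with trivial $T$-coordinate. If $H'\subseteq G$, then for any $c=(c_i)_{i\in S}\in\prod_{i\in S}G_i'$ the element $(c,1)$ lies in $G_1'\times\dots\times G_n'\subseteq G$, so $\prod_{i\in S}G_i'\subseteq K_S$; since $G_S'\subseteq(\prod_{i\in S}G_i)'=\prod_{i\in S}G_i'$, this forces $G_S/K_S$, and hence $Q_{\mathcal P}$, to be abelian. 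Conversely, suppose all $Q_{\mathcal P}$ are abelian. Applying this to the singleton partition $\mathcal P=\{\{i\},\{1,\dots,n\}\setminus\{i\}\}$, for which $G_S=G_i$ and $K_S=\{c\in G_i:\epsilon_i(c)\in G\}$, abelianness of $Q_{\mathcal P}=G_i/K_S$ gives $G_i'\subseteq K_S$, that is $\epsilon_i(G_i')\subseteq G$. Ranging over $i$ yields $G_1'\times\dots\times G_n'\subseteq G$, and the commutator criterion closes the loop.

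The main obstacle is really the bookkeeping in the commutator criterion: one must check carefully that conjugation by a coordinate subgroup disturbs only a single coordinate, and that surjectivity of the projections is exactly what lets the perturbing factors sweep out the full commutator subgroup $G_i'$ rather than some smaller subgroup. Everything else is formal. It is worth noting that the reverse direction in fact only uses the abelian entanglement condition for the $n$ singleton partitions, so the argument shows these already suffice; moreover, since $H'\subseteq G$ forces $H/G$ to be a quotient of the abelian group $H/H'$, normality of a subdirect product automatically yields an abelian quotient, recovering the framing at the start of this subsection.
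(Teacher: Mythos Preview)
Your argument is correct. The route differs from the paper's in its architecture: the paper first isolates the case $n=2$ as a separate proposition (conjugating by coordinate elements and tracking which Goursat subgroup absorbs the resulting commutator), and then reduces the general statement to this case by viewing $G\leqslant G_j\times G_{S_j}$ for each~$j$ on one side, and $G\leqslant G_S\times G_T$ for an arbitrary partition on the other. You instead go straight to the \emph{commutator criterion} $G\unlhd H\Longleftrightarrow G_1'\times\dots\times G_n'\subseteq G$ for all~$n$ at once, and then read off both directions of the proposition from it. The underlying computations are the same (the identity $\iota_i(a)\,g\,\iota_i(a)^{-1}=\epsilon_i([a,g_i])\,g$ together with surjectivity of $\pi_i$), but your packaging is more economical: it bypasses the auxiliary $n=2$ proposition, and it yields for free both that the singleton partitions already suffice (the paper's Corollary~\ref{entanglementsCor}) and that $H/G$ is abelian when $G$ is normal (the paper's Proposition~\ref{NormalQuotient}). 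The paper's two-step reduction has the minor expository advantage that the $n=2$ case makes the role of the Goursat subgroups more visible before the index bookkeeping enters.
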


The proof will use the following proposition, which is the case $n=2$.

\begin{pro}\label{entanglementsN2}
Let $G$ be a subgroup of $G_1 \times G_2$ such that the projection maps $\pi_1:G\rightarrow G_1$ and $\pi_2:G\rightarrow G_2$ are surjective. Then $G\unlhd G_1 \times G_2$ if and only if $G$ has abelian entanglements.
\end{pro}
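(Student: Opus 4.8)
The plan is to apply Goursat's Lemma (Lemma \ref{Goursat}) to the subdirect product $G \leqslant G_1 \times G_2$, obtaining a Goursat quotient $Q$ together with surjective homomorphisms $\psi_1 : G_1 \to Q$ and $\psi_2 : G_2 \to Q$ for which
\begin{equation*}
G = \{(g_1,g_2) \in G_1 \times G_2 : \psi_1(g_1) = \psi_2(g_2)\}.
\end{equation*}
For $n=2$ the only two-set partition is $\{\{1\},\{2\}\}$, so ``$G$ has abelian entanglements'' means exactly that $Q$ is abelian. Thus the proposition becomes the statement that $G\unlhd G_1\times G_2$ if and only if $Q$ is abelian, and I would prove the two implications separately.

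For the direction ``$Q$ abelian $\Rightarrow G$ normal'' I would argue directly. Given $(g_1,g_2)\in G$ and an arbitrary $(h_1,h_2)\in G_1\times G_2$, the conjugate is $(h_1 g_1 h_1^{-1}, h_2 g_2 h_2^{-1})$, and to check its membership in $G$ it suffices to verify $\psi_1(h_1 g_1 h_1^{-1}) = \psi_2(h_2 g_2 h_2^{-1})$. Since each $\psi_i$ is a homomorphism into the \emph{abelian} group $Q$, conjugation becomes trivial after applying $\psi_i$, so the left-hand side equals $\psi_1(g_1)$ and the right-hand side equals $\psi_2(g_2)$; these agree precisely because $(g_1,g_2)\in G$. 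Hence the conjugate lies in $G$, and $G$ is normal.

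For the converse ``$G$ normal $\Rightarrow Q$ abelian'', the key move is to conjugate by elements supported in a single factor. Given any $h_1\in G_1$ and any $(g_1,g_2)\in G$, normality forces $(h_1,1)(g_1,g_2)(h_1,1)^{-1} = (h_1 g_1 h_1^{-1}, g_2)\in G$, so that $\psi_1(h_1 g_1 h_1^{-1}) = \psi_2(g_2) = \psi_1(g_1)$. Rewriting this as $\psi_1(h_1)\psi_1(g_1)\psi_1(h_1)^{-1} = \psi_1(g_1)$ shows that $\psi_1(h_1)$ and $\psi_1(g_1)$ commute in $Q$. Since $\pi_1:G\to G_1$ is surjective, the element $g_1$ ranges over all of $G_1$, while $h_1$ is already arbitrary in $G_1$; surjectivity of $\psi_1$ then forces every pair of elements of $Q$ to commute, that is, $Q$ is abelian.

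The computation is short, so the only step requiring real care — and the one I expect to be the crux — is the reverse direction, where one must choose the conjugating element well. Conjugating by the ``one-sided'' element $(h_1,1)$ rather than a general $(h_1,h_2)$ is what isolates the commutator relation purely inside $Q$ and lets surjectivity of $\psi_1$ finish the argument; a general conjugation would twist both factors simultaneously and obscure the conclusion.
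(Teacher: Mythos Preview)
Your proof is correct and follows essentially the same approach as the paper. Both directions hinge on the same idea: for the reverse implication, conjugate an element of $G$ by a one-sided element $(h_1,1)$ to force a commutation relation in $Q$, and for the forward implication, use that passing to an abelian quotient kills conjugation. The only cosmetic difference is that the paper phrases everything in terms of the Goursat subgroups $N_i$ and the containment $(G_1\times\{1\})'\leqslant N_1$, whereas you work directly with the maps $\psi_i$ and the fibered-product description; your version is slightly more streamlined in the forward direction (conjugating by a general $(h_1,h_2)$ at once rather than reducing to one factor at a time), but the underlying argument is the same.
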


\begin{proof}
Suppose first that $G$ has abelian entanglements, and let $x:=(x_1,x_2)\in G$. We will show that for any $a\in G_1\times\{1\}$ one has $axa^{-1}\in G$, and similarly for every $b\in \{1\}\times G_2$. The result will then follow. So take $a:=(a_1, 1)\in G_1\times\{1\}$.  Let $N_1$ and $N_2$ be the corresponding Goursat subgroups associated to $G$, that is, $N_1 = (G_1 \times \{1\})\cap G$ and $N_2 = (\{1\} \times G_2)\cap G$. Then because $G$ has abelian entanglements we have that $(G_1 \times \{1\})/N_1$ is abelian, or equivalently $(G_1 \times \{1\})' \leqslant N_1$. It follows that $[(a_1,1),(x_1,1)]\in G$, however
\begin{align*}
[(a_1,1),(x_1,1)] &= (a_1,1)(x_1,1)(a_1,1)^{-1} (x_1,1)^{-1} \\
						  &= (a_1,1)(x_1,x_2)(a_1,1)^{-1} (x_1,x_2)^{-1}
\end{align*}
and $(x_1,x_2)^{-1}$ is in $G$, hence $(a_1,1)(x_1,x_2)(a_1,1)^{-1}$ is also in $G$, as claimed. Similarly one can show $(1,b_2)(x_1,x_2)(1,b_2)^{-1} \in G$ for any $b_2 \in G_2$, and we conclude $G$ is normal in $G_1 \times G_2$.

For the converse, suppose that $G \unlhd G_1 \times G_2$. We will show that $(G_1 \times \{1\})' \leqslant N_1$, from which it follows that $G$ has abelian entanglements. Let $(x_1,1)$ and $(y_1,1)$ be arbitrary elements of $G_1 \times \{1\}$. Because $\pi_1:G\rightarrow G_1$ is surjective, there exists $z\in G_2$ such that $(y_1,z)\in G$. As $G \unlhd G_1\times G_2$, we have $(x_1,1)(y_1,z)(x_1,1)^{-1}$ is in $G$ and hence so is $[(x_1,1),(y_1,z)]$. Using the fact that $[(x_1,1),(y_1,1)] = [(x_1,1),(y_1,z)]$, we obtain $[(x_1,1),(y_1,1)] \in G$. However $[(x_1,1),(y_1,1)] = ([x_1,y_1],1) \in G_1 \times \{1\}$, hence the result.
\end{proof}

\begin{proof}[Proof of Proposition \ref{entanglementsProp}]
Again we suppose first that $G$ has abelian entanglements, and we proceed similarly as in the case $n=2$. Let $x:=(x_1,\dots, x_n)\in G$, and for $j\in\{1,\dots,n\}$ let $a:=(1,\dots,1,a_j,1,\dots,1) \in \{1\} \times \dots \times \{1\} \times G_j \times \{1\} \times \dots \times \{1\}$ where the $a_j$ is in the $j$-th position. Let $S_j := \{1,\dots ,n\} \backslash \{j\}$. Then $G\leqslant G_j \times G_{S_j}$ with surjective projection maps and the corresponding quotient $(G_j \times \{1\})/N_j$ is abelian. By Proposition \ref{entanglementsN2}, $G$ is a normal subgroup of $G_j \times G_{S_j}$. But $a$ is certainly an element of $G_j \times G_{S_j}$, hence $axa^{-1}\in G$. Since $j$ was chosen arbitrarily we conclude $G\unlhd G_1\times\dots\times G_n$.

Conversely, suppose $G\unlhd G_1\times\dots\times G_n$, and let $\mathcal{P}:=\{S,T\}$ be a partition of $\{1,\dots ,n\}$. Then note that $G_S\times G_T$ may be viewed as a subgroup of $G_1\times \dots \times G_n$ and so $G\unlhd G_S \times G_T$. By Proposition \ref{entanglementsN2} the corresponding Goursat quotient $Q_{\mathcal{P}}$ is abelian, hence $G$ has abelian entanglements. This completes the proof.
\end{proof}

In the proof we used the subset $S_j := \{1,\dots n\} \backslash \{j\} \subset \{1,\dots, n\}$. Here we have that $G$ is a subdirect product of $G_j \times G_{S_j}$, so by Goursat's lemma there is a group $Q_j$ and a pair of homomorphisms $\psi_j := (\psi_j^{(1)},\psi_j^{(2)})$ such that $G = G_j \times_{\psi_j} G_{S_j}$. The following corollary tells us that these are all the partitions we need to consider in order to determine whether or not $G$ has abelian entanglements.

\begin{cor}\label{entanglementsCor}
With the notation above, $G$ has abelian entanglements if and only if $Q_j$ is abelian for every $j\in \{1,\dots , n\}$.
\end{cor}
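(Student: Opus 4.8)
The plan is to deduce the corollary from Proposition \ref{entanglementsProp} together with the observation that, in its proof, the implication \emph{abelian entanglements $\Rightarrow$ normal} in fact only invokes the partitions of the special shape $\{\{j\},S_j\}$. One direction is essentially bookkeeping and the other reuses an argument already carried out.

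First I would dispatch the forward implication: if $G$ has abelian entanglements, then each $Q_j$ is abelian because each pair $\{\{j\},S_j\}$ is a two-set partition of $\{1,\dots,n\}$, so $Q_j = Q_{\{\{j\},S_j\}}$ is by definition one of the Goursat quotients required to be abelian. For the converse, suppose $Q_j$ is abelian for every $j$. Here I would re-run the first half of the proof of Proposition \ref{entanglementsProp}: for each $j$, the hypothesis that $Q_j$ is abelian is precisely the statement that the quotient $(G_j\times\{1\})/N_j$ is abelian, so Proposition \ref{entanglementsN2} applied to the subdirect product $G\leqslant G_j\times G_{S_j}$ yields $G\unlhd G_j\times G_{S_j}$. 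Since each generator $(1,\dots,1,a_j,1,\dots,1)$ lies in $G_j\times G_{S_j}$, and since $G_1\times\dots\times G_n$ is generated by such elements as $j$ ranges over $\{1,\dots,n\}$, conjugation by every element of $G_1\times\dots\times G_n$ preserves $G$; that is, $G\unlhd G_1\times\dots\times G_n$. Having obtained normality from the $Q_j$ alone, I then invoke the converse direction of Proposition \ref{entanglementsProp} to conclude that $G$ has abelian entanglements.

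The one point I would take care to flag, rather than any genuine obstacle, is that $G_j\times G_{S_j}$ must not be confused with the full product $G_1\times\dots\times G_n$: because $G_{S_j}$ is merely the image of $G$ in $\prod_{i\in S_j}G_i$ and may be a proper subgroup, the relation $G\unlhd G_j\times G_{S_j}$ for a single $j$ does not on its own give normality in the full product. This is exactly why all $n$ of the partitions $\{\{j\},S_j\}$ are needed to generate the ambient group by conjugating elements, and it is the reason the corollary has content. Beyond this, nothing substantive remains to prove, since the hard work is already contained in Propositions \ref{entanglementsN2} and \ref{entanglementsProp}.
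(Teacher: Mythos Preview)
Your proposal is correct and follows essentially the same route as the paper: the forward implication is dismissed as trivial (each $\{\{j\},S_j\}$ being a two-set partition), and the converse is obtained by noting that the first half of the proof of Proposition~\ref{entanglementsProp} only uses the abelianness of the $Q_j$ to deduce $G\unlhd G_1\times\dots\times G_n$, after which the other direction of Proposition~\ref{entanglementsProp} gives abelian entanglements. Your added remark that $G_j\times G_{S_j}$ may be strictly smaller than the full product, so that all $n$ values of $j$ are genuinely needed, is a worthwhile clarification but does not constitute a different approach.
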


\begin{proof}
One implication is trivial. Suppose that $Q_j$ is abelian for every $j\in \{1,\dots ,n\}$. Then by the proof of Proposition \ref{entanglementsProp}, $G$ is a normal subgroup of $G_1\times\dots\times G_n$, and again using Proposition \ref{entanglementsProp}, $G$ has abelian entanglements, as claimed.
\end{proof}

\begin{pro}\label{NormalQuotient}
Suppose that $G$ is a normal subgroup of $G_1\times \dots\times G_n$ such that the projection maps $\pi_i : G \rightarrow G_i$ are surjective for all $i$. Then the quotient $(G_1\times \dots \times G_n) / G$ is abelian.
\end{pro}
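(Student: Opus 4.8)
The plan is to show that $G$ contains the commutator subgroup $(G_1\times\dots\times G_n)'$, which is exactly the statement that the quotient is abelian. The first observation I would record is the standard fact that commutators in a direct product are computed coordinatewise, so that
$$
(G_1\times\dots\times G_n)' = G_1'\times\dots\times G_n'.
$$
Thus everything reduces to proving that $G_1'\times\dots\times G_n'\subseteq G$, and since this product is generated by the ``coordinate'' subgroups $\{1\}\times\dots\times G_j'\times\dots\times\{1\}$ (with $G_j'$ in the $j$-th slot), it suffices to show that each of these lies in $G$.

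The main step, and the only place where the hypotheses really get used, is to prove that $\{1\}\times\dots\times G_j'\times\dots\times\{1\}\subseteq G$ for each fixed $j$. Here is how I would localize a commutator to the $j$-th coordinate. Fix $x_j,y_j\in G_j$. Using the surjectivity of $\pi_j:G\rightarrow G_j$, choose an element $a=(a_1,\dots,a_{j-1},x_j,a_{j+1},\dots,a_n)\in G$ with $j$-th coordinate $x_j$; the remaining coordinates $a_i$ are whatever they happen to be. Let $b=(1,\dots,1,y_j,1,\dots,1)$ denote the element of $G_1\times\dots\times G_n$ with $y_j$ in the $j$-th slot and identity elsewhere. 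Since $G\unlhd G_1\times\dots\times G_n$ we have $bab^{-1}\in G$, and as $a^{-1}\in G$ this gives $bab^{-1}a^{-1}\in G$. Computing this element coordinatewise, every coordinate $i\neq j$ collapses to $1\cdot a_i\cdot 1\cdot a_i^{-1}=1$ because $b$ is trivial there, while the $j$-th coordinate equals $y_jx_jy_j^{-1}x_j^{-1}=[y_j^{-1},x_j^{-1}]$, a commutator of elements of $G_j$. Hence $(1,\dots,1,\,y_jx_jy_j^{-1}x_j^{-1}\,,1,\dots,1)\in G$.

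As $x_j,y_j$ range over $G_j$ these coordinate-localized commutators run over all of $e_j(G_j')$ and hence generate $\{1\}\times\dots\times G_j'\times\dots\times\{1\}$; since $G$ is a subgroup it contains the whole generated subgroup. Running $j$ over $\{1,\dots,n\}$ and multiplying the resulting (pairwise commuting) subgroups shows $G_1'\times\dots\times G_n'\subseteq G$, which by the first paragraph is the desired conclusion. I expect the only subtlety to be the bookkeeping in the localization step: one must notice that the ``unknown'' coordinates $a_i$ of the chosen preimage cancel automatically precisely because $b$ was taken supported on the single coordinate $j$, so that surjectivity of $\pi_j$ alone (rather than any control over the full element $a$) suffices. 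As a sanity check, in the case $n=2$ this recovers the fact implicit in Proposition \ref{entanglementsN2} that $(G_1\times G_2)/G$ is the abelian Goursat quotient $Q$.
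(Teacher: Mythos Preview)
Your argument is correct. Both you and the paper prove the same inclusion $(G_1\times\dots\times G_n)'\subseteq G$, but the routes differ. The paper first invokes Proposition~\ref{entanglementsProp} to deduce that $G$ has abelian entanglements, then for each $j$ uses the Goursat decomposition $G\hookrightarrow G_j\times G_{S_j}$ to conclude that $G_j/\pi_j(N_j)$ is abelian; this forces each coordinate $x_j$ of a commutator element into $\pi_j(N_j)$, whence $(1,\dots,x_j,\dots,1)\in N_j\subseteq G$. Your proof bypasses the abelian-entanglements machinery entirely and works directly from normality: you conjugate a chosen preimage $a\in G$ by an element $b$ supported on a single coordinate, and the commutator $bab^{-1}a^{-1}$ lands in $G$ with all off-coordinates collapsing to the identity. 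This is essentially the same ``localize a commutator'' trick used inside the proof of Proposition~\ref{entanglementsN2}, but applied here in one step rather than via the intermediate equivalence. Your approach is more self-contained and slightly more elementary; the paper's approach has the advantage of making explicit that the relevant coordinate subgroups are exactly the Goursat kernels $N_j$, which is the structural information reused later (e.g.\ in Proposition~\ref{projEntang}).
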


\begin{proof}
We will proceed by showing that $(G_1\times\dots\times G_n)'\leqslant G$. Let $x:=(x_1,\dots\,x_n) \in (G_1\times\dots\times G_n)'$. By Proposition \ref{entanglementsProp} $G$ has abelian entanglements, so for each $j$, to the inclusion $G\hookrightarrow G_j \times G_{S_j}$ there corresponds an abelian quotient $G_j/\pi_j(N_j)$, where $N_j = (G_j \times \{1\})\cap G$. The composition
$$
G_1\times\dots\times G_n \xrightarrow{\pi_j} G_j \longrightarrow G_j/\pi_j(N_j)
$$
gives an abelian quotient of $G_1\times\dots\times G_n$, hence $x_j = \pi_j(x_1,\dots,x_n)$ is contained in $\pi_j(N_j)$. It follows that $(1,\dots,1,x_j,1\dots,1)\in G$. As $j$ was arbitrary, and $\prod_j (1,\dots,1,x_j,1\dots,1) = x$, we conclude $x\in G$.
\end{proof}

\begin{pro}\label{projEntang}
Suppose $G$ has abelian entanglements with respect to $G_1\times\dots\times G_n$ and let $S\subseteq\{1,\dots,n\}$. Then $G_S$ has abelian entanglements with respect to $\prod_{i\in S} G_i$. 
\end{pro}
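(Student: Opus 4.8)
The plan is to avoid manipulating Goursat quotients directly and instead pass through the normality criterion of Proposition \ref{entanglementsProp}, which makes the statement almost immediate. First I would use that proposition in the ``forward'' direction to unpack the hypothesis: since $G$ has abelian entanglements with respect to $G_1\times\dots\times G_n$, it is a normal subgroup $G\unlhd G_1\times\dots\times G_n$. Dually, applying Proposition \ref{entanglementsProp} in the setting of the smaller product $\prod_{i\in S}G_i$, the conclusion I am after is equivalent to showing two things: that $G_S\unlhd \prod_{i\in S}G_i$, and that the coordinate projections $G_S\to G_i$ are surjective for each $i\in S$ (the latter being the standing hypothesis required to invoke the proposition at all).

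For normality, I would observe that $G_S$ is by definition the image $\pi_S(G)$ of $G$ under the projection homomorphism $\pi_S\colon G_1\times\dots\times G_n\to \prod_{i\in S}G_i$, which is surjective. Since the image of a normal subgroup under a surjective group homomorphism is again normal, $G_S=\pi_S(G)\unlhd \pi_S(G_1\times\dots\times G_n)=\prod_{i\in S}G_i$ follows at once.

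For the surjectivity of the projections, I would note that for $i\in S$ the map $G_S\to G_i$ is the composite of $\pi_S\colon G\to G_S$ with the coordinate projection $\prod_{i\in S}G_i\to G_i$, and this composite is exactly the original map $\pi_i\colon G\to G_i$, which is surjective by hypothesis. Hence each projection $G_S\to G_i$ is surjective. With both normality and surjectivity in hand, a second application of Proposition \ref{entanglementsProp}---now with $\prod_{i\in S}G_i$ playing the role of the ambient product---shows that $G_S$ has abelian entanglements, which is the claim.

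I do not expect a genuine obstacle here, as the argument is a short reduction. The only point that requires a little care is the verification that the surjectivity hypothesis of Proposition \ref{entanglementsProp} is actually satisfied for $G_S$ inside $\prod_{i\in S}G_i$, since that proposition is formulated only for subdirect products; this is precisely what the factorization $\pi_i=(\text{coordinate projection})\circ\pi_S$ guarantees.
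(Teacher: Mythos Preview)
Your argument is correct and, in fact, slightly more direct than the paper's. Both proofs reduce the claim to showing $G_S\unlhd\prod_{i\in S}G_i$ and then invoke Proposition~\ref{entanglementsProp}. You obtain normality in one line from the standard fact that the image of a normal subgroup under a surjective homomorphism is normal. The paper instead passes through Proposition~\ref{NormalQuotient}: it observes that $(G_1\times\dots\times G_n)/G$ is abelian, so the intermediate subgroup $\pi_S^{-1}(G_S)$ is normal with quotient $\Phi_S$, and then factors $G_1\times\dots\times G_n\to\Phi_S$ through $\prod_{i\in S}G_i$ to identify $G_S$ as the kernel of the induced map. Your route avoids this detour entirely; the paper's route has the minor side benefit of exhibiting $(\prod_{i\in S}G_i)/G_S$ explicitly as a quotient of the abelian group $(G_1\times\dots\times G_n)/G$, but that is not needed for the statement. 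Your care in verifying the surjectivity of $G_S\to G_i$ for $i\in S$ is appropriate, since Proposition~\ref{entanglementsProp} is stated only for subdirect products.
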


\begin{proof}
We will show that $G_S$ is normal in $\prod_{i\in S} G_i$. Note that 
$$
G \leqslant \pi_S^{-1} (G_S) \leqslant G_1\times\dots\times G_n
$$
and by Proposition \ref{NormalQuotient} the quotient $(G_1\times\dots\times G_n)/G$ is abelian. It follows then that $\pi_S^{-1} (G_S)$ is normal in $G_1\times\dots\times G_n$, and denote the quotient by $\Phi_S$. Now $\ker \pi_S \subset \pi_S^{-1} (G_S)$ so the map $G_1\times\dots\times G_n \rightarrow \Phi_S$ factors via $\prod_{i\in S} G_i$. Let $\psi_S$ be such that the following diagram commutes

\begin{center}
\begin{tikzpicture}[auto]
\node (a) at (0,2) {$G_1\times\dots\times G_n$};
\node (b) at (0,0) {$\displaystyle{\prod_{i\in S} G_i}$};
\node (c) at (2.2,0) {$\Phi_S$};

\node at (-.3,1.2) {$\pi_S$};
\node at (1.2,.3) {$\psi_S$};

\draw[->,thick] (a) -- (b);
\draw[->,thick] (a) -- (c);
\draw[->,thick] (b) -- (c);
\end{tikzpicture}.
\end{center}
It is easy to see that the kernel of $\psi_S$ is precisely $G_S$, hence $G_S$ is normal in $\prod_{i\in S} G_i$ and by Proposition \ref{entanglementsProp} $G_S$ has abelian entanglements with respect to $\prod_{i\in S} G_i$, as claimed.
\end{proof}

\section{Elliptic curves with abelian entanglements}\label{CurvesAbEntang}
We consider here a family of elliptic curves with the property that the intersections of the different torsion fields of each curve in this family are abelian extensions. 

We say that an elliptic curve $E$ has \emph{abelian entanglements} if the corresponding group $G(m_E) \leqslant G(\ell_1^{\alpha_1}) \times \dots \times G(\ell_n^{\alpha_n})$ has abelian entanglements in the sense of section \ref{AbEntang}, where $m_E$ as usual denotes the smallest split and stable integer for $E$, and has prime factorisation $m_E=\ell_1^{\alpha_1}\dots \ell_n^{\alpha_n}$.

\begin{lem}\label{EquivEntang}
The following two conditions are equivalent:
\begin{itemize}
\item[(i)]
$E$ has abelian entanglements.
\item[(ii)] 
For each $m_1,m_2\in \mathbb{N}$ which are relatively prime, the intersection
$$
\mathbb{Q}([m_1]) \cap \mathbb{Q}([m_2])
$$
is an abelian extension of $\mathbb{Q}$.
\end{itemize}
\end{lem}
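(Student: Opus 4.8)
The plan is to set up a dictionary, via Lemma~\ref{GoursatGalois}, between the Goursat quotients that measure abelian entanglements and the intersection fields $\mathbb{Q}(E[m_1])\cap\mathbb{Q}(E[m_2])$, and then to bridge the gap between the single modulus $m_E$ and arbitrary coprime pairs using the defining property of $m_E$. Write $G(m)=\Gal(\mathbb{Q}(E[m])/\mathbb{Q})$. For coprime $a,b$ one has $\mathbb{Q}(E[ab])=\mathbb{Q}(E[a])\mathbb{Q}(E[b])$, so $G(ab)$ is a subdirect product of $G(a)\times G(b)$ whose Goursat quotient is, by Lemma~\ref{GoursatGalois}, exactly $\Gal(\mathbb{Q}(E[a])\cap\mathbb{Q}(E[b])/\mathbb{Q})$. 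This observation immediately handles the implication (ii)$\Rightarrow$(i): for a two-set partition $\{S,T\}$ of $\{1,\dots,n\}$, put $m_S=\prod_{i\in S}\ell_i^{\alpha_i}$ and $m_T=\prod_{i\in T}\ell_i^{\alpha_i}$, which are coprime with product $m_E$. The images of $G(m_E)$ under the two projections are $G(m_S)$ and $G(m_T)$, so the Goursat quotient $Q_{\{S,T\}}$ is $\Gal(\mathbb{Q}(E[m_S])\cap\mathbb{Q}(E[m_T])/\mathbb{Q})$, which is abelian by~(ii). As this holds for every partition, $G(m_E)$ has abelian entanglements.

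For the converse (i)$\Rightarrow$(ii) I would first reduce an arbitrary coprime pair to one adapted to $m_E$. Given coprime $m_1,m_2$, enlarge them to coprime $M_1,M_2$ with $m_1\mid M_1$, $m_2\mid M_2$, and $m_E\mid M_1M_2$, by placing the exact prime power of each $\ell\mid m_E$ into $M_1$ when $\ell\mid m_1$ or $\ell\nmid m_1m_2$, and into $M_2$ when $\ell\mid m_2$; coprimality of $m_1,m_2$ keeps $M_1,M_2$ coprime. Because $\mathbb{Q}(E[m_1])\cap\mathbb{Q}(E[m_2])\subseteq\mathbb{Q}(E[M_1])\cap\mathbb{Q}(E[M_2])$, and every intermediate field of an abelian extension is itself abelian over $\mathbb{Q}$, it is enough to prove that $\mathbb{Q}(E[M_1])\cap\mathbb{Q}(E[M_2])$ is abelian over $\mathbb{Q}$. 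By the first paragraph this intersection field is the Goursat quotient of $G(M)$, where $M:=M_1M_2$, for the partition of the primes of $M$ into those dividing $M_1$ and those dividing $M_2$.

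The whole argument therefore rests on showing that $G(M)$ has abelian entanglements inside $\prod_{\ell\mid M}G(\ell^{v_\ell(M)})$, given only hypothesis~(i) and $m_E\mid M$; this is the step I expect to be the main obstacle, and the only place where the arithmetic of $E$ rather than formal group theory enters. Here I would invoke the defining property of the smallest split and stable integer: the componentwise restriction $r\colon\prod_{\ell\mid M}G(\ell^{v_\ell(M)})\twoheadrightarrow\prod_{i=1}^{n}G(\ell_i^{\alpha_i})$ carries $G(M)$ onto $G(m_E)$, and stability forces $G(M)=r^{-1}(G(m_E))$, i.e.\ no entanglement persists above level $m_E$ and the higher levels are independent over $\mathbb{Q}(E[m_E])$. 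Granting this, Proposition~\ref{entanglementsProp} turns hypothesis~(i) into the normality of $G(m_E)$ in $\prod_{i=1}^{n}G(\ell_i^{\alpha_i})$; since the preimage of a normal subgroup under the surjection $r$ is again normal, $G(M)\unlhd\prod_{\ell\mid M}G(\ell^{v_\ell(M)})$, and Proposition~\ref{entanglementsProp} applied a second time shows that $G(M)$ has abelian entanglements. In particular its Goursat quotient for the $\{M_1,M_2\}$-partition is abelian, which is exactly what the reduction required.

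A remark on the hard step: were the clean equality $G(M)=r^{-1}(G(m_E))$ unavailable, one would instead have to verify directly that the only entanglements appearing above $m_E$ are the pairwise-trivial cyclotomic ones, distinct primes contributing linearly disjoint cyclotomic fields, so that the Goursat quotient of $G(M)$ remains abelian. The purpose of choosing $m_E$ split and stable is precisely to make that equality hold, thereby collapsing the statement to the formal facts encoded in Lemma~\ref{GoursatGalois}, Proposition~\ref{entanglementsProp}, and the observation that subextensions of abelian extensions are abelian.
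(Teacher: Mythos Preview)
Your argument is correct. Both directions are handled cleanly, and your identification of the key input---that $G(M)=r^{-1}(G(m_E))$ whenever $m_E\mid M$, which is exactly what ``split and stable'' encodes---is the right place to locate the arithmetic content.

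The route you take for (i)$\Rightarrow$(ii) is genuinely different from the paper's. The paper \emph{shrinks} the pair: it replaces $m_1,m_2$ by $m_i'=(m_i,m_E)$, observes (via split/stable) that the Goursat quotients agree, and then appeals to Proposition~\ref{projEntang} to handle coprime divisors of $m_E$. You instead \emph{enlarge} the pair so that $m_E\mid M_1M_2$, pull back normality of $G(m_E)$ along the surjection $r$, and invoke Proposition~\ref{entanglementsProp} twice. The two approaches are dual: the paper projects down and uses that projections preserve abelian entanglements, while you lift up and use that preimages preserve normality. Your version has the minor advantage that it makes explicit the need for both the split and the stable property (the paper's appeal to ``Serre's open image theorem'' for the isomorphism $Q_{m_1m_2}\simeq Q_{m_1'm_2'}$ is terse when $v_\ell(m_i)>\alpha_\ell$ for some $\ell\mid m_E$, since then $m_i/m_i'$ is not coprime to $m_E$ and one genuinely needs stability as well). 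The paper's version is slightly shorter because Proposition~\ref{projEntang} does the group-theoretic work in one stroke. Also note that the paper only writes out (i)$\Rightarrow$(ii); your explicit treatment of (ii)$\Rightarrow$(i) via Lemma~\ref{GoursatGalois} is exactly the intended (and omitted) argument.
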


\begin{proof}
Suppose $E$ has abelian entanglements, and let $m_1,m_2$ be relatively prime. If $m_1$ and $m_2$ both divide $m_E$, then by Proposition \ref{projEntang} gives that $G(m_1m_2)$ has abelian entanglements with respect to $G(m_1)\times G(m_2)$. This implies the Goursat quotient $Q_{m_1m_2}$ is abelian, and by Lemma \ref{GoursatGalois} we have $\mathbb{Q}([m_1]) \cap \mathbb{Q}([m_2])$ is an abelian extension of $\mathbb{Q}$. For general $m_1,m_2$, let
$$
m_1' = (m_1,m_E), \quad m_2' = (m_2,m_E).
$$
Then $m_1'$ and $m_2'$ are relatively prime integers dividing $m_E$ so be the same argument $\mathbb{Q}([m_1']) \cap \mathbb{Q}([m_2'])$ is an abelian extension of $\mathbb{Q}$. From Serre's open image Theorem if $n$ is any integer and $d$ is coprime to $nm_E$ then 
$$
G(nd) = G(n) \times \GL_2(\mathbb{Z}/d\mathbb{Z}).
$$
It follows that $Q_{m_1m_2}$ is isomorphic  to $Q_{m_1'm_2'}$, hence the claim.
\end{proof}

\begin{cor}\label{CorEquivEntang}
If $E$ has abelian entanglements, then for any $m:=\prod_i q_i^{s_i}$ we have that $G(m)\leqslant \prod_i G(q_i^{s_i})$ has abelian entanglements.
\end{cor}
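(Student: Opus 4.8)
The plan is to reduce the claim to the field-theoretic criterion already established in Lemma \ref{EquivEntang}. Write $m = \prod_{i=1}^{k} q_i^{s_i}$ with the $q_i$ distinct primes. By the definition of abelian entanglements, I must show that for every two-set partition $\mathcal{P} = \{S,T\}$ of $\{1,\dots,k\}$ the associated Goursat quotient $Q_{\mathcal{P}}$ is abelian; by Corollary \ref{entanglementsCor} it in fact suffices to treat the partitions $\{j\}$ versus its complement, but the argument is uniform across all partitions, so I would simply fix an arbitrary $\mathcal{P} = \{S,T\}$ and set $a = \prod_{i\in S} q_i^{s_i}$ and $b = \prod_{i\in T} q_i^{s_i}$, which are coprime with $ab = m$.

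The main work is to identify $Q_{\mathcal{P}}$ with a concrete Galois group. First I would record that $G_S$, the image of $G(m)$ under projection to the factors indexed by $S$, is canonically identified with $G(a) = \Gal(\mathbb{Q}([a])/\mathbb{Q})$: restriction gives a surjection $G(m) \twoheadrightarrow G(a)$ (as $\mathbb{Q}([a]) \subseteq \mathbb{Q}([m])$ is Galois over $\mathbb{Q}$), and since $\mathbb{Q}([a])$ is the compositum of the $\mathbb{Q}([q_i^{s_i}])$ with $i\in S$, composing this surjection with the natural embedding $G(a) \hookrightarrow \prod_{i\in S} G(q_i^{s_i})$ recovers $\pi_S$; the same reasoning gives $G_T \cong G(b)$. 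Because $E[m] \cong E[a]\oplus E[b]$ for coprime $a,b$, we have $\mathbb{Q}([m]) = \mathbb{Q}([a])\,\mathbb{Q}([b])$, so under these identifications the partition $\mathcal{P}$ exhibits $G(m)$ as precisely the Galois-theoretic fibered product of Lemma \ref{GoursatGalois}. Since by Goursat's Lemma the Goursat quotient depends only on the subgroup $G(m) \leqslant G_S \times G_T$, I would conclude
$$
Q_{\mathcal{P}} \cong \Gal\big(\mathbb{Q}([a]) \cap \mathbb{Q}([b]) \,/\, \mathbb{Q}\big).
$$

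Finally, since $E$ has abelian entanglements and $a,b$ are coprime, Lemma \ref{EquivEntang} tells us that $\mathbb{Q}([a]) \cap \mathbb{Q}([b])$ is an abelian extension of $\mathbb{Q}$, so $Q_{\mathcal{P}}$ is abelian. As $\mathcal{P}$ was arbitrary, $G(m)$ has abelian entanglements with respect to $\prod_i G(q_i^{s_i})$. I expect the only genuine obstacle to be the bookkeeping of the second paragraph: carefully matching the group-theoretic Goursat quotient coming from the partition $\{S,T\}$ with the Galois-theoretic one of Lemma \ref{GoursatGalois}, which rests on the compositum identity $\mathbb{Q}([m]) = \mathbb{Q}([a])\,\mathbb{Q}([b])$ and on the surjectivity of the restriction maps. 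Once these identifications are in place, everything else is a direct appeal to results already proved.
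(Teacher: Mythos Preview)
Your proof is correct and follows essentially the same approach as the paper. The paper's proof is a single sentence invoking Corollary~\ref{entanglementsCor} and Lemma~\ref{EquivEntang}; you have simply unpacked the identification $Q_{\mathcal{P}} \cong \Gal(\mathbb{Q}([a]) \cap \mathbb{Q}([b])/\mathbb{Q})$ via Lemma~\ref{GoursatGalois} that the paper leaves implicit.
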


\begin{proof}
This follows immediately from Corollary \ref{entanglementsCor} and Lemma \ref{EquivEntang}.
\end{proof}

Assume now that $E$ is an elliptic curve over $\mathbb{Q}$ with abelian entanglements, and let $m$ be a positive integer with prime factorisation $m=\prod_\ell \ell^{\alpha_\ell}$. Since $E$ has abelian entanglements, by Corollary  \ref{CorEquivEntang} and Proposition \ref{NormalQuotient} there are a map $\psi_m$ and a finite abelian group $\Phi_m$ that fit into the exact sequence
\begin{equation}\label{ExSeqPsi}
1 \longrightarrow G(m) \longrightarrow \prod_{\ell \mid m} G(\ell^{\alpha_\ell}) \xrightarrow{\phantom{a}\psi_m\phantom{b}} \Phi_m \longrightarrow 1 .
\end{equation}
Note that the group $\Phi_m$ measures the extent to which there are entanglements between the various $\ell^{\alpha_\ell}$-torsion fields. For instance $\Phi_m$ is trivial if and only if for any two coprime integers $m_1,m_2$ dividing $m$ one has $\mathbb{Q}(E[m_1])\cap \mathbb{Q}(E[m_2]) = \mathbb{Q}$. The following lemma tells us that $\Phi_{m_E}$ measures the full extent to which the distinct torsion fields of $E$ have any entanglements.

\begin{lem}\label{squareFreeM}
Let $m$ be a positive integer and $d$ be a positive integer coprime to $m_E$. Then $\Phi_{md} \simeq \Phi_{m}$.
\end{lem}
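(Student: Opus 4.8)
The plan is to isolate the part of $m$ supported on the primes dividing $m_E$ and to show that $\Phi_m$ depends only on this part. Concretely, for a positive integer $k$ with $p$-adic valuations $\gamma_p$, write $k = k_0 k_1$, where $k_0 = \gcd(k, m_E^{\infty})$ is supported on the primes dividing $m_E$ and $k_1$ is coprime to $m_E$. Since $d$ is coprime to $m_E$, no prime of $d$ divides $m_E$, so $\gcd(md, m_E^{\infty}) = \gcd(m, m_E^{\infty})$; that is, $m$ and $md$ have the same $m_E$-part. Hence it suffices to establish the single isomorphism $\Phi_k \simeq \Phi_{k_0}$ for every $k$, and then to apply it to $k = m$ and to $k = md$ in order to conclude $\Phi_{md} \simeq \Phi_{k_0} \simeq \Phi_m$.

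To prove $\Phi_k \simeq \Phi_{k_0}$ I would first rewrite the ambient product in the defining sequence (\ref{ExSeqPsi}). Grouping the prime-power factors of $k$ according to whether the prime divides $m_E$, and using that $G(p^{s}) = \GL_2(\mathbb{Z}/p^{s}\mathbb{Z})$ whenever $p \nmid m_E$ (the case $n=1$ of Serre's open image theorem recalled in the proof of Lemma \ref{EquivEntang}), the Chinese Remainder Theorem gives
$$
\prod_{p \mid k} G(p^{\gamma_p}) \;\simeq\; \Big(\prod_{p \mid k_0} G(p^{\gamma_p})\Big) \times \GL_2(\mathbb{Z}/k_1\mathbb{Z}).
$$
On the other hand, applying Serre's open image theorem in the form $G(n d') = G(n) \times \GL_2(\mathbb{Z}/d'\mathbb{Z})$ with $n = k_0$ and $d' = k_1$ (legitimate since $k_1$ is coprime to $k_0 m_E$) yields
$$
G(k) \;=\; G(k_0) \times \GL_2(\mathbb{Z}/k_1\mathbb{Z}).
$$

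The crux, and the only point requiring genuine care, is that these two identifications exhibit the \emph{same} direct factor $\GL_2(\mathbb{Z}/k_1\mathbb{Z})$ in the ambient product and in $G(k)$: both coprime-to-$m_E$ parts are the full matrix group, not merely some subquotient. Granting this, the quotient collapses by the elementary cancellation $(M \times L)/(K \times L) \simeq M/K$, taken with $L = \GL_2(\mathbb{Z}/k_1\mathbb{Z})$, $M = \prod_{p \mid k_0} G(p^{\gamma_p})$ and $K = G(k_0)$, so that
$$
\Phi_k \;\simeq\; \Big(\prod_{p \mid k_0} G(p^{\gamma_p})\Big) \big/ G(k_0) \;=\; \Phi_{k_0},
$$
which is the group $\Phi_{k_0}$ by its defining sequence (\ref{ExSeqPsi}). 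The remaining verifications are pure bookkeeping: any prime dividing both $m$ and $d$ is automatically coprime to $m_E$, so it lands in the $k_1$-part for both $m$ and $md$ and disappears in either quotient, and the exponent of such a prime, though possibly larger in $md$ than in $m$, is irrelevant precisely because the corresponding $\GL_2$-factor cancels. I do not anticipate any difficulty beyond securing the direct-factor splitting furnished by Serre's theorem, which is exactly what forces the coprime part to contribute trivially to the entanglement quotient.
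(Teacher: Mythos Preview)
Your argument is correct and follows essentially the same approach as the paper: both proofs invoke Serre's open image theorem to split off the coprime-to-$m_E$ part as a full $\GL_2$ direct factor of both $G(md)$ and the ambient product, and then observe that this factor cancels in the quotient defining $\Phi$. Your detour through the $m_E$-part $k_0$ is a mild restructuring that handles the edge case $\gcd(m,d)>1$ a bit more explicitly than the paper's direct computation, but the underlying mechanism is identical.
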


\begin{proof}
Again there is a map $\psi_{md}$ and an abelian group $\Phi_{md}$ which fit into the short exact sequence
$$
1 \longrightarrow G(md) \longrightarrow \prod_{\ell^{\alpha_\ell} || md} G(\ell^{\alpha_\ell}) \xrightarrow{\phantom{a}\psi_{md}\phantom{b}} \Phi_{md} \longrightarrow 1.
$$
As $d$ is coprime to $m_E$, by Serre's open image Theorem we have that
\begin{equation}\label{ConstantChi}
G(md) = G(m) \times \prod_{\ell^{\alpha_\ell} || d} G(\ell^{\alpha_\ell})
\end{equation}
It follows that $G(\ell^{\alpha_\ell})$ is contained in the kernel of $\psi_{md}$ for any $\ell \mid d$, hence $\Phi_{md}\simeq \Phi_m$.
\end{proof}

For each prime $\ell \mid m$, let $S(\ell)$ be a subset of $G(\ell^{\alpha_\ell})$, and define
$$
\mathcal{S}_m:= \prod_{\ell \mid m} S(\ell), \quad \mathcal{G}_m:= \prod_{\ell \mid m} G(\ell^{\alpha_\ell}).
$$
so that $\mathcal{S}_m \subset \mathcal{G}_m$. The following theorem allows us to compute the fraction of elements in $G(m)$ that belong to $\prod_{\ell \mid m} S(\ell)$. It will play a key role in the method we will develop for computing entanglement correction factors as character sums. If $A$ is an abelian group, then $\widehat{A}$ denotes the group of characters $\chi:A\rightarrow \mathbb{C}^\times$.

\begin{thm}\label{ThmGoodFrobeniusCalc}
Assume $E/\mathbb{Q}$ has abelian entanglements, and let $\Phi_m$ be as in (\ref{ExSeqPsi}). For each $\tilde{\chi}\in \widehat{\Phi}_m$ a character of $\Phi_m$, let $\chi$ be the character of $\mathcal{G}_m$ obtained by composing $\tilde{\chi}$ with $\psi_m$, and let $\chi_\ell$ the restriction of $\chi$ to the component $G(\ell^{\alpha_\ell})$. Then
$$
\frac{|\mathcal{S}_m\cap G(m)|}{|G(m)|} = \bigg( 1+\sum_{\tilde{\chi}\in \widehat{\Phi}_m-\{1\}}\prod_{\ell | m} E_{\chi,\ell} \bigg ) \frac{|\mathcal{S}_m|}{|\mathcal{G}_m|},
$$
where
$$
E_{\chi,\ell} = \sum_{x\in S(\ell)} \frac{\chi_\ell(x)}{|S(\ell)|}.
$$
\end{thm}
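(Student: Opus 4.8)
The plan is to prove the identity by writing the indicator function of the subgroup $G(m)=\ker\psi_m$ inside $\mathcal{G}_m$ via orthogonality of characters on the abelian quotient $\Phi_m$, and then exploiting the product structure of $\mathcal{S}_m$. First I would record, for every $g\in\mathcal{G}_m$,
$$
\mathds{1}_{G(m)}(g) = \frac{1}{|\Phi_m|}\sum_{\tilde{\chi}\in\widehat{\Phi}_m}\tilde{\chi}(\psi_m(g)),
$$
which holds because $\psi_m(g)$ is the identity of $\Phi_m$ exactly when $g\in G(m)$, and the orthogonality relations for the finite abelian group $\Phi_m$ make the right-hand side equal to $1$ in that case and $0$ otherwise. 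This is precisely where the abelian-entanglements hypothesis is used: it is Proposition \ref{NormalQuotient} together with the exact sequence (\ref{ExSeqPsi}) that guarantees $\Phi_m$ is a finite \emph{abelian} group, so that its characters separate points.

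Summing over $g\in\mathcal{S}_m$ and interchanging the two finite sums gives
$$
|\mathcal{S}_m\cap G(m)| = \frac{1}{|\Phi_m|}\sum_{\tilde{\chi}\in\widehat{\Phi}_m}\ \sum_{g\in\mathcal{S}_m}\chi(g),
$$
where $\chi=\tilde{\chi}\circ\psi_m$ is the associated character of $\mathcal{G}_m$. The next step is the factorization: writing a generic element $g=(g_\ell)_\ell$ as the product of the insertions of its components into $\mathcal{G}_m$, and using that $\chi$ is a homomorphism into $\mathbb{C}^\times$, one obtains $\chi(g)=\prod_{\ell\mid m}\chi_\ell(g_\ell)$. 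Consequently the inner sum over the product set $\mathcal{S}_m=\prod_\ell S(\ell)$ factors,
$$
\sum_{g\in\mathcal{S}_m}\chi(g) = \prod_{\ell\mid m}\Big(\sum_{x\in S(\ell)}\chi_\ell(x)\Big) = |\mathcal{S}_m|\prod_{\ell\mid m}E_{\chi,\ell}.
$$

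Finally I would isolate the trivial character, for which every $\chi_\ell$ is trivial and hence $E_{\chi,\ell}=1$, so that its contribution is exactly $|\mathcal{S}_m|$; and I would use the exact sequence (\ref{ExSeqPsi}), which gives $|\mathcal{G}_m|=|G(m)|\,|\Phi_m|$, to replace $1/|\Phi_m|$ by $|G(m)|/|\mathcal{G}_m|$. Combining these yields
$$
\frac{|\mathcal{S}_m\cap G(m)|}{|G(m)|} = \frac{|\mathcal{S}_m|}{|\mathcal{G}_m|}\Big(1+\sum_{\tilde{\chi}\in\widehat{\Phi}_m-\{1\}}\prod_{\ell\mid m}E_{\chi,\ell}\Big),
$$
which is the assertion. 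The only genuinely delicate point is the factorization $\chi(g)=\prod_\ell\chi_\ell(g_\ell)$: the factors $G(\ell^{\alpha_\ell})$ are in general non-abelian, so $\chi$ is not a product of irreducibles of the full factors in the naive representation-theoretic sense. What rescues the argument is that $\chi$ takes values in the abelian group $\mathbb{C}^\times$ and therefore kills all commutators, so its restriction $\chi_\ell$ to each factor is an honest one-dimensional character, and multiplicativity across the distinct, mutually commuting factors goes through. I expect the careful justification of this factorization, rather than any conceptual difficulty, to be the main thing requiring attention.
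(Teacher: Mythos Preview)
Your proposal is correct and follows essentially the same approach as the paper: both write the indicator function of $G(m)$ in $\mathcal{G}_m$ as the character average $\frac{1}{|\Phi_m|}\sum_{\tilde{\chi}}\chi$ via orthogonality, then factor the resulting sum over $\mathcal{S}_m=\prod_\ell S(\ell)$ using that $\chi$ and $\mathds{1}_{\mathcal{S}_m}$ decompose as products over the components. Your discussion of the factorization $\chi(g)=\prod_\ell\chi_\ell(g_\ell)$ is slightly more explicit than the paper's, but the argument is the same.
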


\begin{proof}
Let $\mathds{1}_{\mathcal{S}_m}$ be the indicator function of $\mathcal{S}_m$ in $\mathcal{G}_m$, and $\mathds{1}_{G(m)}$ that of $G(m)$. Also, to simplify notation we will use $\Phi$ in place of $\Phi_m$. Then we have that
$$
\frac{|\mathcal{S}_m\cap G(m)|}{|G(m)|} = \frac{1}{|G(m)|} \sum_{x\in \mathcal{G}_m} \mathds{1}_{\mathcal{S}_m} (x) \mathds{1}_{G(m)} (x).
$$
By the orthogonality relations of characters (see for instance \S VI.1 of \cite{SerreACA}) we have that if $x\in \mathcal{G}_m$, then
$$
\sum_{\tilde{\chi}\in \widehat{\Phi}} \chi(x) = 
\begin{cases}
[\mathcal{G}_m:G(m)] &\ \text{if $x \in G(m)$} \\
0	&\ \text{if $x\notin G(m)$}.
\end{cases}
$$
This implies that
$$
\mathds{1}_{G(m)} = \frac{1}{[\mathcal{G}_m:G(m)]}\sum_{\tilde{\chi}\in \widehat{\Phi}} \chi,
$$
so it follows that
\begin{align*}
\frac{|\mathcal{S}_m\cap G(m)|}{|G(m)|} &= \frac{1}{|\mathcal{G}_m|} \bigg ( \sum_{x\in \mathcal{G}_m} \mathds{1}_{\mathcal{S}_m}(x) + \sum_{x\in \mathcal{G}_m} \sum_{\tilde{\chi} \in \widehat{\Phi}\backslash\{1\}} \mathds{1}_{\mathcal{S}_m}(x) \chi(x) \bigg ) \\
                               &= \frac{|\mathcal{S}_m|}{|\mathcal{G}_m|} \bigg ( 1 + \sum_{x\in \mathcal{G}_m} \sum_{\tilde{\chi} \in \widehat{\Phi}\backslash\{1\}} \frac{\mathds{1}_{\mathcal{S}_m}(x) \chi(x)}{|\mathcal{S}_m|} \bigg )  \\
                               &= \frac{|\mathcal{S}_m|}{|\mathcal{G}_m|} \Bigg ( 1 + \sum_{\tilde{\chi} \in \widehat{\Phi}\backslash\{1\}} \bigg( \prod_{\ell \mid m} \sum_{x\in G(\ell)} \frac{\mathds{1}_{S(\ell)}(x) \chi_\ell(x)}{|S(\ell)|} \bigg) \Bigg ) \\
                               &= \frac{|\mathcal{S}_m|}{|\mathcal{G}_m|} \Bigg ( 1 + \sum_{\tilde{\chi} \in \widehat{\Phi}\backslash\{1\}} \bigg( \prod_{\ell \mid m} \sum_{x\in S(\ell)} \frac{\chi_\ell(x)}{|S(\ell)|} \bigg) \Bigg )
\end{align*}
where the third equality follows from the fact that $\mathds{1}_{\mathcal{S}_m}$ and $\chi$ are products of functions $\mathds{1}_{S(\ell)}$ and $\chi_\ell$ defined on the components $G(\ell^{\alpha_\ell})$. The result now follows from letting $E_{\chi,\ell} $ be the average value of $\chi_\ell$ on $S(\ell)$, that is
$$
E_{\chi,\ell} = \sum_{x\in S(\ell)} \frac{\chi_\ell(x)}{|S(\ell)|}.
$$
\end{proof}

%///////////// COMMENT /////////////////////////

\begin{comment}
Let
$$
m:=\prod_{\ell \mid m_E} \ell \quad \text{and} \quad L:=\prod_\ell \mathbb{Q}(E[\ell])
$$
denote the square-free part of $m_E$ as well as the composite over all primes $\ell$ of the $\ell$-torsion of $E$, respectively. The next lemma shows that when looking at entanglements which occur at the square-free torsion level, it suffices to consider the square-free part of $m_E$.

\begin{lem}\label{Squarefree}
With the above notation, we have
$$
\Big[\prod_\ell G(\ell) : \Gal(L/\mathbb{Q})\Big] = \Big[\prod_{\ell \mid m} G(\ell) : G(m)\Big].
$$
\end{lem}

\begin{proof}
That this index is finite is immediate from Serre's theorem (REF). Note that, because $m$ is the square-free part of $m_E$, we have
$$
\Gal(L/\mathbb{Q}) = G(m) \times \prod_{\ell \nmid m} \GL_2(\mathbb{Z}/\ell\mathbb{Z}).
$$
The result now follows by applying the snake lemma to the commutative diagram
$$
\begin{CD}
1 @>>> G(m) @>>> \Gal(L/\mathbb{Q}) @>>> \displaystyle{\prod_{\ell \nmid m}  \GL_2(\mathbb{Z}/\ell\mathbb{Z})} @>>> 1 \\
@.            @VVV                  @VVV                        						@VVV				\\
1 @>>> \displaystyle{\prod_{\ell \mid m} G(\ell)} @>>> \displaystyle{\prod_\ell G(\ell)} @>>> \displaystyle{\prod_{\ell \nmid m}  \GL_2(\mathbb{Z}/\ell\mathbb{Z})} @>>> 1.
\end{CD}
$$
\end{proof}
\end{comment}

%////////////// END COMMENT //////////////////////

\section{Cyclic reduction of elliptic curves}\label{CyclicRedEC}
In this section we consider an elliptic curve analogue of Artin's classical conjecture on primitive roots. Recall that this conjecture predicts the density of primes $p$ such that a given rational number is a primitive root modulo $p$. In \cite{LTArtin}, Lang and Trotter formulated an analogous conjecture for elliptic curves over $\mathbb{Q}$. Namely, if $P$ is a point of $E(\mathbb{Q})$ of infinite order, then the problem is to determine the density of primes $p$ for which $\tilde{E}(\mathbb{F}_p)$ is generated by $\tilde{P}$, the reduction of $P$ modulo $p$.

Note that for there to exist even one prime $p$ of good reduction with this property, a necessary condition is that the group $\tilde{E}(\mathbb{F}_p)$ be cyclic, and that is the question we consider here. In \cite{SerreResume}, Serre showed assuming the Generalized Riemann Hypothesis that the set of primes $p$ such that $\tilde{E}(\mathbb{F}_p)$ is cyclic has a density. He did this by adapting Hooley's argument of conditionally proving Artin's conjecture on primitive roots. Namely, we have the following:

\begin{thm}[Serre, 1976]
Let $E$ be an elliptic curve defined over $\mathbb{Q}$ with conductor $N_E$. Assuming GRH we have that
$$
|\{ p \leqslant x \text{ prime} :  p \nmid N_E,\ \tilde{E}(\mathbb{F}_p) \text{ is cyclic} \}| \sim C_E \frac{x}{\log{x}}
$$
as $x\rightarrow \infty$, where $\displaystyle{C_E:=\sum_{n\geqslant 1} \frac{\mu(n)}{[\mathbb{Q}(E[n]):\mathbb{Q}]}}$. 
\end{thm}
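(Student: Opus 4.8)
The plan is to adapt Hooley's conditional proof of Artin's primitive root conjecture, replacing the fields $\mathbb{Q}(\zeta_\ell,\sqrt[\ell]{g})$ by the torsion fields $\mathbb{Q}(E[\ell])$. First I would record the group-theoretic reformulation of cyclicity: writing $\tilde{E}(\mathbb{F}_p)\cong\mathbb{Z}/g_p\mathbb{Z}\times\mathbb{Z}/(g_ph_p)\mathbb{Z}$, the group is cyclic precisely when $g_p=1$, and for squarefree $d$ one has $E[d]\subseteq\tilde{E}(\mathbb{F}_p)$ if and only if $d\mid g_p$, which for $p\nmid N_E$ forces $p\nmid d$ and is equivalent to $\Frob_p$ acting trivially on $E[d]$, i.e.\ to $p$ splitting completely in $\mathbb{Q}(E[d])$. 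Since $\sum_{d\mid g_p}\mu(d)=[g_p=1]$, setting $\pi_d(x):=\#\{p\le x:p\nmid N_E,\ E[d]\subseteq\tilde{E}(\mathbb{F}_p)\}$ and exchanging the order of summation gives the exact identity
\[
\#\{p\le x:p\nmid N_E,\ \tilde{E}(\mathbb{F}_p)\text{ cyclic}\}=\sum_{d\ge1}\mu(d)\,\pi_d(x).
\]
By the Hasse bound $d^2\le\#\tilde{E}(\mathbb{F}_p)\le(\sqrt{p}+1)^2$, so $\pi_d(x)=0$ once $d>\sqrt{x}+1$ and the sum is finite.

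The analytic input is a splitting at the cutoff $\xi:=\sqrt{x}/(\log x)^3$. For the head $d\le\xi$ I would invoke the GRH-effective Chebotarev density theorem applied to the identity class of $\Gal(\mathbb{Q}(E[d])/\mathbb{Q})$. Writing $n_d:=[\mathbb{Q}(E[d]):\mathbb{Q}]$ and using that the primes ramifying in $\mathbb{Q}(E[d])$ divide $dN_E$, so that $\log|d_{\mathbb{Q}(E[d])}|\ll n_d\log(dN_E)$, the error term collapses to
\[
\pi_d(x)=\frac{\mathrm{Li}(x)}{n_d}+O\!\big(\sqrt{x}\,\log(dN_Ex)\big),
\]
the factor $n_d$ coming from the number of zeros cancelling against the density $1/n_d$. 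Summing over $d\le\xi$ produces a total error $\ll\xi\sqrt{x}\log x=o(x/\log x)$, while the main term is $\mathrm{Li}(x)\sum_{d\le\xi}\mu(d)/n_d$. Here the structure of the Galois image enters: for non-CM curves the mod-$\ell$ representation is surjective for almost all $\ell$ by Serre's open image theorem, while for CM curves $n_\ell\gg\ell^2$ by the theory of complex multiplication; in either case $n_d\gg_E d^2$ on squarefree $d$, so the series $C_E=\sum_d\mu(d)/n_d$ converges and its tail beyond $\xi$ is $O(1/\xi)$. This identifies the head as $C_E\,\mathrm{Li}(x)+o(x/\log x)\sim C_E\,x/\log x$.

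For the tail $\xi<d\le\sqrt{x}+1$ I would argue elementarily, with no appeal to GRH. If $E[d]\subseteq\tilde{E}(\mathbb{F}_p)$ then $\Frob_p$ is trivial modulo $d$, forcing $p\equiv1$ and $a_p\equiv2\pmod{d}$, and moreover $d^2\mid\#\tilde{E}(\mathbb{F}_p)=p+1-a_p$. Enumerating the $\ll\sqrt{x}/d$ admissible values of $a_p$ in $(-2\sqrt{x},2\sqrt{x})$ and noting that each such value confines $p$ to a single residue class modulo $d^2$, one gets $\pi_d(x)\ll x^{3/2}/d^3+x/d^2+\sqrt{x}/d+1$; summing over $d>\xi$ gives $\ll\sqrt{x}(\log x)^6=o(x/\log x)$, the term $x^{3/2}/\xi^2$ dominating. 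Combining the two ranges yields the stated asymptotic.

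The main obstacle is the control of the head, and this is exactly where GRH is indispensable. The elementary tail estimate only becomes efficient once $d\gg x^{1/4}(\log x)^{1/2}$, whereas the \emph{unconditional} effective Chebotarev theorem permits $d$ only up to a fixed power of $\log x$; the resulting middle gap cannot be closed. Under GRH the Chebotarev error is $\sqrt{x}\log(dN_Ex)$ uniformly in $d$, which is precisely what allows the head to be pushed out to $\sqrt{x}/(\log x)^3$ and meet the elementary range. The remaining technical care lies in verifying the discriminant bound $\log|d_{\mathbb{Q}(E[d])}|\ll n_d\log(dN_E)$ and the degree lower bound $n_d\gg_E d^2$, both of which rest on the description of the Galois image afforded by Serre's open image theorem.
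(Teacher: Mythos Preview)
The paper does not actually prove this theorem: it is stated as a result of Serre with a reference to \cite{SerreResume}, and the paper only remarks that Serre obtained it ``by adapting Hooley's argument of conditionally proving Artin's conjecture on primitive roots.'' So there is no proof in the paper to compare your proposal against.

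That said, your sketch is a faithful rendition of the Hooley--Serre strategy and is essentially correct. The inclusion--exclusion identity via $\sum_{d\mid g_p}\mu(d)$, the splitting at $\xi=\sqrt{x}/(\log x)^3$, the use of effective Chebotarev under GRH together with the discriminant bound $\log|d_{\mathbb{Q}(E[d])}|\ll n_d\log(dN_E)$ for the head, and the elementary counting argument for the tail (exploiting $a_p\equiv 2\pmod d$ and $d^2\mid p+1-a_p$) are all standard and sound. One small point worth tightening: the lower bound $n_d\gg_E d^2$ on squarefree $d$ is immediate from Serre's open image theorem in the non-CM case (in fact one gets $\gg_E d^4$), but in the CM case it requires the explicit description of the image inside the normaliser of a Cartan; you gesture at this but should make the dependence of the implied constant on $E$ (through the finitely many exceptional primes or the CM order) explicit. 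Also note that when $C_E=0$ (e.g.\ when $E$ has full rational $2$-torsion) the asymptotic $\sim$ should be read as $o(x/\log x)$, which your error terms still deliver.
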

We explicitly evaluate this density $C_E$ as an Euler product. Note that the condition of $\tilde{E}(\mathbb{F}_p)$ being cyclic is completely determined by $\rho_E(G_\mathbb{Q})$. Indeed, $\tilde{E}(\mathbb{F}_p)$ is cyclic if and only if $p$ does not split completely in the field $\mathbb{Q}(E[\ell])$ for any $\ell \neq p$. Note that this condition is automatically satisfied when $\ell > p$, since $p$ splitting completely in $\mathbb{Q}(E[\ell])$ implies $p\equiv 1 \pmod{\ell}$. In other words, if for each prime $\ell$ we define the set $S(\ell) := G(\ell)-\{1\}$, then for all $p \nmid N_E$ the group $\tilde{E}(\mathbb{F}_p)$ is cyclic if and only if $\rho_{\ell}(\Frob_p)\in S(\ell)$ for any $\ell < p$, i.e. if $p$ does not split completely in $\mathbb{Q}(E[\ell])$.

By the Chebotarev density theorem, the set of primes $p$ that do not split completely in $\mathbb{Q}(E[\ell])$ has density equal to 
$$
\delta_\ell := \frac{|S(\ell)|}{|G(\ell)|} = 1-\frac{1}{[\mathbb{Q}(E[\ell]):\mathbb{Q}]}.
$$
If we assume that the various splitting conditions at each prime $\ell$ are independent, then it is reasonable to expect that the density of primes $p$ for which $\tilde{E}(\mathbb{F}_p)$ is cyclic is equal to $\prod_\ell \delta_\ell$. However as we know, this assumption of independence is not correct, as different torsion fields may have non-trivial intersection. To be precise, for each square-free integer $d$ let 
$$
\mathcal{S}_d:= \prod_{\ell \mid d} S(\ell), \quad \mathcal{G}_d:= \prod_{\ell \mid d} G(\ell).
$$
By Chebotarev, the density of primes $p$ such that $p\nmid N_E$ and $\rho_\ell(\Frob_p) \in S(\ell)$ for all $\ell\mid d$ and $\ell\neq p$ is equal to $|\mathcal{S}_d\cap G(d)|/|G(d)|$. If we let $d$ increase to infinity ranging over square-free integers, then Serre's above result implies that, assuming GRH,
\begin{equation}\label{CELimit}
C_E = \lim_{d\rightarrow \infty}\frac{|\mathcal{S}_d\cap G(d)|}{|G(d)|}
\end{equation}
where the limit will be seen to exist.

Now let $m=\prod_{\ell\mid m_E} \ell$ be the square-free part of $m_E$, and let $d$ be a square-free integer coprime to $m$. By (\ref{ConstantChi}) we have
$$
\frac{|\mathcal{S}_{md}\cap G(md)|}{|G(md)|} = \frac{|\mathcal{S}_m\cap G(m)|}{|G(m)|} \prod_{\ell \mid d} \frac{|S(\ell)|}{|G(\ell)|}.
$$
For $\ell$ coprime to $m_E$, we have that $|S(\ell)|/|G(\ell)|$ is $1+\mathcal{O}(1/\ell^4)$ so the limit in (\ref{CELimit}) does indeed exist. Letting $d$ tend to infinity over the square-free numbers then gives
$$
C_E = \frac{|\mathcal{S}_m\cap G(m)|}{|G(m)|} \prod_{\ell \nmid m} \frac{|S(\ell)|}{|G(\ell)|}.
$$
The above discussion implies that if we do take into account entanglements, then assuming GRH we have
\begin{equation}\label{CorrectionAsQuotient}
C_E = \mathfrak{C}_E \prod_\ell \delta_\ell
\end{equation}
where $\mathfrak{C}_E$ is an \emph{entanglement correction factor}, and explicitly evaluating such densities amounts to computing the correction factors $\mathfrak{C}_E$. The entanglement correction factor $\mathfrak{C}_E$ arises as the factor by which $C_E$ differs from the uncorrected value $\lim_{d\rightarrow \infty}|\mathcal{S}_d|/|\mathcal{G}_d|=\prod_\ell \delta_\ell$. We will use Theorem \ref{ThmGoodFrobeniusCalc} for evaluating $\mathfrak{C}_E$ as a character sum for elliptic curves with abelian entanglements.

%/////////////////////////////COMMENT////////////////////////////////

\begin{comment}
The following lemma tells us that $\Phi_{m_E}$ measures the full extent to which the distinct torsion fields of $E$ have any entanglements. In particular $\Phi_{m_E}$ is always of order at least $2$.
\begin{lem}\label{squareFreeM}
Let $m$ be a positive integer and $d$ be a positive integer coprime to $m_E$ . Then $\Phi_{md} \simeq \Phi_{m}$.
\end{lem}

\begin{proof}
Again there is a map $\psi_{md}$ and an abelian group $\Phi_{md}$ which fit into the short exact sequence
$$
1 \longrightarrow G(md) \longrightarrow \prod_{\ell \mid md} G(\ell) \xrightarrow{\phantom{a}\psi_{md}\phantom{b}} \Phi_{md} \longrightarrow 1.
$$
As $d$ is coprime to $m_E$, by Serre's open image Theorem we have that
\begin{equation}\label{ConstantChi}
G(md) = G(m) \times \prod_{\ell \mid d} G(\ell)
\end{equation}
It follows that $G(\ell)$ is contained in the kernel of $\psi_{md}$ for any $\ell \mid d$, and in follows that $\Phi_{md}\simeq \Phi_m$.
\end{proof}
\end{comment}

%///////////////////////////////END COMMENT//////////////////////

\begin{thm}\label{ThmCorrectionFactor}
Assume $E/\mathbb{Q}$ has abelian entanglements, and let $\Phi_m$ be as in (\ref{ExSeqPsi}). Let $\tilde{\chi}\in \widehat{\Phi}_m$ be a character of $\Phi_m$ and let $\chi$ be the character of $\mathcal{G}_m$ obtained by composing $\tilde{\chi}$ with $\psi_m$. Define $E_{\chi,\ell}$ by
$$
E_{\chi,\ell}=
\begin{cases}
1 & \text{if $\chi$ is trivial on $G(\ell)$,} \\
\frac{-1}{[\mathbb{Q}(E[\ell]):\mathbb{Q}]-1} & \text{otherwise.}
\end{cases}
$$
Then
$$
C_E = \mathfrak{C}_E \prod_{\ell} \delta_\ell
$$
where the entanglement correction factor $\mathfrak{C}_E$ is given by
$$
\mathfrak{C}_E = 1+\sum_{\tilde{\chi}\in \widehat{\Phi}-\{1\}}\prod_{\ell | m} E_{\chi,\ell}.
$$
\end{thm}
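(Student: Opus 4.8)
The plan is to read $\mathfrak{C}_E$ off the formula already established in this section and then specialise the general local averages of Theorem \ref{ThmGoodFrobeniusCalc} to the cyclicity sets $S(\ell)=G(\ell)-\{1\}$. The identity $C_E=\mathfrak{C}_E\prod_\ell\delta_\ell$ is the content of (\ref{CorrectionAsQuotient}), so the only thing that remains is to evaluate the character sum defining $\mathfrak{C}_E$.

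First I would recall that, with $m=\prod_{\ell\mid m_E}\ell$ the square-free part of $m_E$, the discussion culminating in (\ref{CorrectionAsQuotient}) shows
$$
\mathfrak{C}_E=\frac{|\mathcal{S}_m\cap G(m)|/|G(m)|}{|\mathcal{S}_m|/|\mathcal{G}_m|}.
$$
Since $E$ has abelian entanglements, Theorem \ref{ThmGoodFrobeniusCalc} applies directly and yields
$$
\mathfrak{C}_E=1+\sum_{\tilde{\chi}\in\widehat{\Phi}_m-\{1\}}\prod_{\ell\mid m}E_{\chi,\ell},\qquad E_{\chi,\ell}=\frac{1}{|S(\ell)|}\sum_{x\in S(\ell)}\chi_\ell(x).
$$
Thus the entire content of the theorem reduces to computing these $E_{\chi,\ell}$ for the particular choice $S(\ell)=G(\ell)-\{1\}$.

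Next I would compute $E_{\chi,\ell}$ by cases. If $\chi$ is trivial on $G(\ell)$, then $\chi_\ell$ is identically $1$ and averaging over $S(\ell)$ gives $E_{\chi,\ell}=1$. If $\chi$ is nontrivial on $G(\ell)$, I would write $\sum_{x\in S(\ell)}\chi_\ell(x)=\sum_{x\in G(\ell)}\chi_\ell(x)-\chi_\ell(1)$; using the vanishing $\sum_{x\in G(\ell)}\chi_\ell(x)=0$ together with $\chi_\ell(1)=1$ gives $\sum_{x\in S(\ell)}\chi_\ell(x)=-1$. Since $|S(\ell)|=|G(\ell)|-1=[\mathbb{Q}(E[\ell]):\mathbb{Q}]-1$, this produces $E_{\chi,\ell}=-1/([\mathbb{Q}(E[\ell]):\mathbb{Q}]-1)$. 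Substituting these two values into the displayed character sum is precisely the asserted formula for $\mathfrak{C}_E$.

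The one point requiring care — the main obstacle, such as it is — is that $G(\ell)=\Gal(\mathbb{Q}(E[\ell])/\mathbb{Q})$ need not be abelian, so one cannot naively invoke orthogonality relations for characters of an abelian group. However $\chi_\ell$ is the restriction of the homomorphism $\chi=\tilde{\chi}\circ\psi_m\colon\mathcal{G}_m\to\mathbb{C}^\times$, so it is a one-dimensional character of $G(\ell)$, and for such a character the vanishing $\sum_{x\in G(\ell)}\chi_\ell(x)=0$ for nontrivial $\chi_\ell$ follows from the elementary reindexing argument: choosing $h$ with $\chi_\ell(h)\neq1$, one has $\chi_\ell(h)\sum_{x}\chi_\ell(x)=\sum_{x}\chi_\ell(hx)=\sum_{x}\chi_\ell(x)$, forcing the sum to be $0$. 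I would state this remark explicitly so that the evaluation of $E_{\chi,\ell}$ is legitimate for arbitrary, possibly non-abelian, $G(\ell)$.
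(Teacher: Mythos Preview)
Your proposal is correct and follows essentially the same route as the paper: apply Theorem \ref{ThmGoodFrobeniusCalc} to the sets $S(\ell)=G(\ell)-\{1\}$, identify $\mathfrak{C}_E$ via (\ref{CorrectionAsQuotient}) as the quotient $\frac{|\mathcal{S}_m\cap G(m)|/|G(m)|}{|\mathcal{S}_m|/|\mathcal{G}_m|}$, and then evaluate the local averages $E_{\chi,\ell}$ using $\sum_{x\in G(\ell)}\chi_\ell(x)=0$ for nontrivial $\chi_\ell$. Your explicit justification of this last vanishing via the reindexing argument for one-dimensional characters of a possibly non-abelian $G(\ell)$ is a nice clarification that the paper leaves implicit.
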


\begin{proof}
By Theorem \ref{ThmGoodFrobeniusCalc} we have that
\begin{equation*}
\frac{|\mathcal{S}_m\cap G(m)|}{|G(m)|} = \frac{|\mathcal{S}_m|}{|\mathcal{G}_m|} \Bigg ( 1 + \sum_{\tilde{\chi} \in \widehat{\Phi}\backslash\{1\}} \prod_{\ell \mid m} E_{\chi,\ell} \Bigg ),
\end{equation*}
where $E_{\tilde{\chi},\ell} $ is the average value of $\chi_\ell$ on $S(\ell)$. By (\ref{CorrectionAsQuotient}), we know that
\begin{align*}
\mathfrak{C}_E &= \frac{C_E}{\prod_\ell \delta_\ell} \\
       	 			     &= \frac{|\mathcal{S}_m\cap G(m)|/|G(m)|}{|\mathcal{S}_m|/|\mathcal{G}_m|}.
\end{align*}
Finally, notice that if $\chi$ is non-trivial on $G(\ell)$ then $\chi_\ell$ is non-trivial, hence
$$
\sum_{x\in S(\ell)} \chi_\ell (x)  = \bigg( \sum_{x\in G(\ell)} \chi_\ell (x)\bigg) - \chi_\ell (1) = -1.
$$
This completes the proof.
\end{proof}

\begin{rem}
Note that in the above theorem we may replace $m$ by any square-free multiple of it. Indeed, for any $\tilde{\chi}$, it follows from Lemma \ref{squareFreeM} that $E_{\chi,\ell}=1$ for any $\ell \nmid m$, hence the product $\prod_{\ell | m} E_{\chi,\ell}$ does not change, and the quotient of $|\mathcal{S}_{md}\cap G(md)|/|G(md)|$ and $|\mathcal{S}_{md}|/|\mathcal{G}_{md}|$ is constant as $d$ tends to infinity.
\end{rem}

In what follows we will use Theorem \ref{ThmCorrectionFactor} to compute $\mathfrak{C}_E$ for various elliptic curves over $\mathbb{Q}$. 

\subsection{Serre curves}\label{SerreEntanglement}
Consider the representation $\rho_E: G_\mathbb{Q} \rightarrow \GL_2(\widehat{\mathbb{Z}})$ given by the action of $G_\mathbb{Q}$ on $E(\overline{\mathbb{Q}})_{\text{tors}}$. Serre has shown in \cite{SerrePG} that the image of $\rho_E$ is always contained in a specific index $2$ subgroup of $\GL_2(\widehat{\mathbb{Z}})$ and thus $\rho_E$ is \emph{never} surjective. Following Lang and Trotter, we define an elliptic curve $E$ over $\mathbb{Q}$ to be a Serre curve if $[\GL_2(\widehat{\mathbb{Z}}):G]=2$.

It follows from the result of Serre that Serre curves are elliptic curves over $\mathbb{Q}$ whose Galois action on their torsion points is as large as possible. Jones has shown in \cite{JonesSerreCurves} that ``most'' elliptic curves over $\mathbb{Q}$ are Serre curves (see Section FIX THIS for the more precise statement) . Thus they are prevalent over $\mathbb{Q}$ and we also have complete understanding of their Galois theory, and this makes their entanglement factors particularly easy to handle in conjunction with Theorem \ref{ThmCorrectionFactor}.

First we briefly describe the index $2$ subgroup $H_E$ of $\GL_2(\widehat{\mathbb{Z}})$ (see \cite{SerrePG}, page 311 for more details). To this end let $\chi_\Delta:G_\mathbb{Q}\rightarrow \{\pm 1 \}$ be the character associated to $K:=\mathbb{Q}(\sqrt{\Delta})$, where $\Delta$ is the discriminant of any Weierstrass model of $E$ over $\mathbb{Q}$, and note that $\chi_\Delta$ does not depend on the choice of model. Let
$$
\varepsilon:\GL_2(\mathbb{Z}/2\mathbb{Z}) \longrightarrow \{\pm 1\}
$$
be the signature map under any isomorphism $GL_2(\mathbb{Z}/2\mathbb{Z}) \simeq S_3$. Then as $K\subset \mathbb{Q}(E[2])$, one can check that $\chi_\Delta = \varepsilon \circ \rho_{E,2}$. 

Note that $K \subset \mathbb{Q}(\zeta_{|D|})$, where $D$ is the discriminant of $\mathbb{Q}(\sqrt{\Delta})$. Then there exists a unique quadratic character $\alpha: (\mathbb{Z}/|D|\mathbb{Z})^\times \rightarrow \{\pm 1 \}$ such that $\chi_\Delta = \alpha \circ \det \rho_{E,|D|}$. From this it follows that $\varepsilon \circ \rho_{E,2} = \alpha \circ \rho_{E,|D|}$. If we then define $M_E = \text{lcm} (|D|,2)$ and
$$
H_{M_E} := \big \{ A\in \GL_2(\mathbb{Z}/M_E\mathbb{Z})\ :\ \varepsilon(A\text{ mod }2) = \alpha\big(\det(A\text{ mod }|D|)\big)\big \},
$$
then it follows from the above discussion that $H_{M_E}$ contains $G(M_E)$. If we let $H_E$ be the inverse image of $H_{M_E}$ in $\GL_2(\widehat{\mathbb{Z}})$ under the reduction map, then $H_E$ is clearly an index $2$ subgroup of $\GL_2(\widehat{\mathbb{Z}})$ which contains $G$. We have then that $G$ is a Serre curve if and only if $\rho_E(G_\mathbb{Q}) = H_E$. It follows from the above discussion that all Serre curves have abelian entanglements.

\begin{pro}\label{SerreCurveFirstCase}
Let $E/\mathbb{Q}$ be a Serre curve. Let $D$ be the discriminant of $\mathbb{Q}(\sqrt{\Delta})$ where $\Delta$ is the discriminant of any Weierstrass model of $E$ over $\mathbb{Q}$. Then
$$
C_E = \mathfrak{C}_E \prod_{\ell}\bigg(1-\frac{1}{(\ell^2-1)(\ell^2-\ell)} \bigg )
$$
where the entanglement correction factor $\mathfrak{C}_E$ is given by
$$
\mathfrak{C}_E =
\begin{cases}
1 & \text{ if $D\equiv 0 \pmod{4}$ } \\
1+ \displaystyle{\prod_{\ell \mid 2D} \frac{-1}{(\ell^2-1)(\ell^2-\ell)-1}} & \text{ if $D\equiv 1 \pmod{4}$ }
\end{cases}
$$
\end{pro}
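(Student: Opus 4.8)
The plan is to apply Theorem~\ref{ThmCorrectionFactor}, so the work divides into computing the local densities $\delta_\ell$ (which give the Euler product) and then identifying the group $\Phi_m$ together with the nontrivial characters needed to evaluate $\mathfrak{C}_E$. For the Euler product I would first observe that for a Serre curve $G(\ell)=\GL_2(\mathbb{Z}/\ell\mathbb{Z})$ for \emph{every} prime $\ell$: the congruence $\varepsilon(A\bmod 2)=\alpha(\det(A\bmod |D|))$ defining $H_E$ couples distinct primes, so the projection of $G$ to any single factor is surjective. Hence $[\mathbb{Q}(E[\ell]):\mathbb{Q}]=|\GL_2(\mathbb{Z}/\ell\mathbb{Z})|=(\ell^2-1)(\ell^2-\ell)$ and $\delta_\ell=1-\frac{1}{(\ell^2-1)(\ell^2-\ell)}$ for all $\ell$, which is the stated product. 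Since Serre curves have abelian entanglements, Theorem~\ref{ThmCorrectionFactor} applies and it remains to compute $\mathfrak{C}_E$.

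For the correction factor I would work at the squarefree torsion level (the cyclic-reduction problem only involves $S(\ell)\subset G(\ell)$), using the remark after Theorem~\ref{ThmCorrectionFactor} to replace $m_E$ by a convenient squarefree multiple $m$ divisible by $2$ and by every $\ell\mid D$. Writing the quadratic character $\alpha$ of $K=\mathbb{Q}(\sqrt{D})$ as a product $\alpha=\alpha_2\cdot\alpha_{\mathrm{odd}}$ of its $2$-part (a character modulo $2^k$, $k\in\{2,3\}$) and its odd part, the two cases are distinguished by whether $\alpha_2$ is trivial.

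In the case $D\equiv 1\pmod 4$ the modulus $|D|$ is odd and squarefree, so $\alpha_2$ is trivial and the defining relation already reads at squarefree level. Taking $m=2|D|$, the subgroup $G(m)$ is exactly the index-$2$ subgroup of $\prod_{\ell\mid m}\GL_2(\mathbb{Z}/\ell\mathbb{Z})$ cut out by a single character $\chi$ whose restriction to the mod-$2$ factor is the signature $\varepsilon$ and whose restriction to each factor with $\ell\mid D$ is the $\ell$-part of $\alpha$ composed with $\det$; thus $\Phi_m\cong\{\pm1\}$ and its unique nontrivial character pulls back to $\chi$. Each $\chi_\ell$ is nontrivial on $G(\ell)$ for $\ell\mid 2D$ (the signature is nontrivial on $\GL_2(\mathbb{Z}/2\mathbb{Z})$, and each quadratic character composed with the surjection $\det$ is nontrivial on $\GL_2(\mathbb{Z}/\ell\mathbb{Z})$) and trivial for $\ell\nmid m$, so $E_{\chi,\ell}=\frac{-1}{(\ell^2-1)(\ell^2-\ell)-1}$ for $\ell\mid 2D$ and $E_{\chi,\ell}=1$ otherwise. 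Theorem~\ref{ThmCorrectionFactor} then gives $\mathfrak{C}_E=1+\prod_{\ell\mid 2D}\frac{-1}{(\ell^2-1)(\ell^2-\ell)-1}$.

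The case $D\equiv 0\pmod 4$ is where I expect the main obstacle. Here $\alpha_2$ is a \emph{nontrivial} character modulo $2^k$, and the point is that the entanglement becomes invisible once one passes to the squarefree torsion level, forcing $\Phi_m$ to be trivial and $\mathfrak{C}_E=1$. Concretely I would show that the reduction of $G$ modulo the squarefree $m$ is the full product $\prod_{\ell\mid m}\GL_2(\mathbb{Z}/\ell\mathbb{Z})$: given any target, the constraint $\varepsilon(A\bmod 2)=\alpha_2(\det(A\bmod 2^k))\,\alpha_{\mathrm{odd}}(\det A)$ can be satisfied by adjusting the mod-$2^k$ lift of the prescribed mod-$2$ matrix, because the determinants of all lifts of a fixed element of $\GL_2(\mathbb{Z}/2\mathbb{Z})$ exhaust $(\mathbb{Z}/2^k\mathbb{Z})^\times$ and $\alpha_2$ is nontrivial there. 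Since this adjustment uses mod-$4$ (or mod-$8$) data that the squarefree level cannot detect, $G$ surjects onto the full product, the sum over nontrivial characters in Theorem~\ref{ThmCorrectionFactor} is empty, and $\mathfrak{C}_E=1$.
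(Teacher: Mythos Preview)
Your proof is correct and follows essentially the same approach as the paper: both apply Theorem~\ref{ThmCorrectionFactor}, note that $G(\ell)=\GL_2(\mathbb{Z}/\ell\mathbb{Z})$ for every $\ell$, and split into the two cases according to $D\bmod 4$, identifying $\Phi_m\cong\{\pm1\}$ with the explicit character $\chi$ in the $D\equiv1\pmod4$ case. The only difference is in the $D\equiv0\pmod4$ case, where the paper simply observes that $m_E=|D|$ is divisible by $4$ and hence $G(m)=\prod_{\ell\mid m}G(\ell)$ for every squarefree $m$, while you unpack this by an explicit lifting argument showing that the nontrivial $\alpha_2$ allows one to adjust the mod-$2^k$ lift freely; your argument is a fuller justification of the same fact.
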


\begin{proof}
Since $E$ is a Serre curve, we have that $G(\ell) = \GL_2(\mathbb{Z}/\ell \mathbb{Z})$ holds for all $\ell$, hence $[\mathbb{Q}(E[\ell]):\mathbb{Q}] = (\ell^2-1)(\ell^2-\ell)$.

Now suppose first that $D \equiv 0 \pmod{4}$. Then $m_E = |D|$ is divisible by $4$, hence we have that
$$
G(m) = \prod_{\ell \mid m} G(\ell)
$$
for all square-free $m$. It follows that $\Phi_m\simeq \{1\}$ hence its character group is trivial and $\mathfrak{C}_E =1$.

Now suppose $D \equiv 1 \pmod{4}$. In this case $m_E = 2|D|$ is square-free, hence $G(m_E)$ is an index $2$ subgroup of $\prod_{\ell \mid m_E}G(\ell)$ and $\Phi\simeq \{\pm 1\}$. For each $\ell>2$ dividing $m_E$, $\chi_\ell$ is the character given by the composition $G(\ell) \xrightarrow{\det} \big(\mathbb{Z}/\ell \mathbb{Z}\big)^* \rightarrow \{\pm 1\}$, that is $\chi_\ell = \blegendre{\det}{\ell}$, and $\chi_2:=\varepsilon$ is the signature map under an isomorphism $\GL_2(\mathbb{Z}/2\mathbb{Z})\simeq S_3$. If we let $\chi:=\prod_{\ell \mid m_E} \chi_\ell$ then we have an exact sequence
$$
1\longrightarrow G(m_E) \longrightarrow \prod_{\ell \mid m_E} G(\ell) \xrightarrow{\phantom{a}\chi\phantom{b}} \{\pm 1\} \longrightarrow 1.
$$
Clearly each $\chi_\ell$ is non-trivial on $G(\ell)$ for each $\ell$ dividing $m_E$ so the result follows from Theorem \ref{ThmCorrectionFactor} and using that $\Phi_{m_E} \simeq \{\pm 1\}$.
\end{proof}

\subsection{Example: $Y^2 + Y = X^3-X^2-10X-20$}

We now consider the elliptic curve over $\mathbb{Q}$ defined by the Weierstrass equation $Y^2 + Y = X^3-X^2-10X-20$. The Galois theory for this elliptic curve has been worked out by Lang and Trotter in \cite{LT}, and in particular they have shown that $m_E = 2 \cdot 5^2 \cdot 11$, and that the following properties hold:

\begin{itemize}
\item
$G(2) = \GL_2(\mathbb{Z}/2\mathbb{Z})$.
\item
$E$ has a rational $5$-torsion point, and $\mathbb{Q}(E[5]) = \mathbb{Q}(\zeta_5)$.
\item
$[\mathbb{Q}(E[5^2]):\mathbb{Q}(E[5])] = 5^4$, hence $5$ is stable.
\item
$\mathbb{Q}(E[5^2]) \cap \mathbb{Q}(E[11]) = \mathbb{Q}(\zeta_{11})^+$,
where $\mathbb{Q}(\zeta_{11})^+$ is the real quadratic subfield of $\mathbb{Q}(\zeta_{11})$. This implies there is a map
$$
\phi_5: G(5^2) \longrightarrow \big( \mathbb{Z}/ 11\mathbb{Z} \big)^\times / \{\pm 1\}.
$$
We make this map explicit. There is a basis for $E[5^2]$ over $\mathbb{Z}/25\mathbb{Z}$ under which we have
$$
G(5^2) = \left \{ \begin{pmatrix} 1+5a & 5b \\ 5c & u \end{pmatrix} : a,b,c,d \in \mathbb{Z}/25\mathbb{Z},\ u\in \big(\mathbb{Z}/25\mathbb{Z} \big)^\times \right \}.
$$
Define the (surjective) homomorphism 
\begin{align*}
\psi : G(5^2) &\longrightarrow \mathbb{Z}/5\mathbb{Z} \\
        \begin{pmatrix} 1+5a & 5b \\ 5c & u \end{pmatrix} & \longmapsto a \mod 5.
\end{align*}
Then $\phi_5$ is given by
$$
A \longmapsto (\pm 2)^{\psi(A)},
$$
where we note that $\pm 2$ is a generator of $(\mathbb{Z}/11\mathbb{Z})^\times / \{ \pm 1 \}$.
\item
$\mathbb{Q}(E[2]) \cap \mathbb{Q}(E[11]) = \mathbb{Q}(\sqrt{-11})$. 
\end{itemize}
From this we conclude that $E$ has abelian entanglements and
\begin{multline*}
G(2\cdot 5^2\cdot 11) = \Big\{ (g_2,g_{25}, g_{11}) \in G(2) \times G(5^2) \times G(11) : \\ \varepsilon(g_2) = \blegendre{\det(g_{11})}{11},\ \phi_5(g_5) = \phi_{11}(g_{11}) \Big\}.
\end{multline*}

\begin{pro}
Let $E/\mathbb{Q}$ be the elliptic curve given by Weierstrass equation $Y^2+Y= X^3-X^2-10X-20$. Then we have
\begin{align*}
C_E &= \frac{3}{4} \mathfrak{C}_E \prod_{\ell \neq 5} \bigg(1 - \frac{1}{(\ell^2-\ell)(\ell^2-1)} \bigg) \\
	   &\approx 0.611597,
\end{align*}
where $\mathfrak{C}_E$ is given by
$$
\mathfrak{C}_E = 1 + \frac{1}{65995}.
$$
\end{pro}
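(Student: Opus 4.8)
The plan is to apply Theorem~\ref{ThmCorrectionFactor} directly, with the only real work being the identification of the relevant Goursat quotient. First I would dispose of the Euler product $\prod_\ell \delta_\ell$. For every prime $\ell\neq 5$ the mod-$\ell$ image $G(\ell)$ is all of $\GL_2(\mathbb{Z}/\ell\mathbb{Z})$: for $\ell\notin\{2,11\}$ this holds because $\ell\nmid m_E$ and $E$ is stable there, while $G(2)=\GL_2(\mathbb{Z}/2\mathbb{Z})$ and $G(11)=\GL_2(\mathbb{Z}/11\mathbb{Z})$ are part of the tabulated data. Hence $[\mathbb{Q}(E[\ell]):\mathbb{Q}]=(\ell^2-1)(\ell^2-\ell)$ and $\delta_\ell=1-\tfrac{1}{(\ell^2-1)(\ell^2-\ell)}$ for $\ell\neq 5$. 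For $\ell=5$ we have $\mathbb{Q}(E[5])=\mathbb{Q}(\zeta_5)$, so $[\mathbb{Q}(E[5]):\mathbb{Q}]=4$ and $\delta_5=\tfrac34$. This already yields the factor $\tfrac34\prod_{\ell\neq5}\big(1-\tfrac{1}{(\ell^2-\ell)(\ell^2-1)}\big)$.

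The heart of the matter is computing $\mathfrak{C}_E$. Since cyclicity of $\tilde{E}(\mathbb{F}_p)$ is a condition on first-power torsion only, with $S(\ell)=G(\ell)\setminus\{1\}$, the relevant level is the square-free part $m=2\cdot5\cdot11=110$, and the group appearing in Theorem~\ref{ThmCorrectionFactor} is the Goursat quotient $\Phi=\Phi_{110}$ of $G(110)\hookrightarrow G(2)\times G(5)\times G(11)$ from~(\ref{ExSeqPsi}). The key observation I would emphasize is that, although $E$ carries a $5$--$11$ entanglement, it lives at level $5^2$: the map $\phi_5$ factors through $\psi\colon G(5^2)\to\mathbb{Z}/5\mathbb{Z}$, which depends on the $5^2$-torsion rather than the $5$-torsion, so it is invisible at level $110$. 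Concretely, the kernel of the reduction $G(5^2)\to G(5)$ consists of matrices $\left(\begin{smallmatrix}1+5a & 5b\\ 5c & u\end{smallmatrix}\right)$ with $u\equiv 1\pmod 5$, and $\psi$ maps these surjectively onto $\mathbb{Z}/5\mathbb{Z}$; hence every fibre of $G(5^2)\to G(5)$ realizes every value of $\phi_5$. Projecting the description of $G(2\cdot 5^2\cdot 11)$ down to $G(2)\times G(5)\times G(11)$ therefore destroys the constraint $\phi_5(g_{25})=\phi_{11}(g_{11})$ completely, leaving only the single relation $\varepsilon(g_2)=\blegendre{\det(g_{11})}{11}$ coming from $\mathbb{Q}(E[2])\cap\mathbb{Q}(E[11])=\mathbb{Q}(\sqrt{-11})$. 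Thus $\Phi_{110}\cong\{\pm1\}$ and $\widehat{\Phi}_{110}\setminus\{1\}$ has exactly one element.

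Finally I would feed the unique nontrivial character $\tilde{\chi}$ into Theorem~\ref{ThmCorrectionFactor}. Pulling back along $\psi_{110}$, the character $\chi=\tilde{\chi}\circ\psi_{110}$ restricts to $\chi_2=\varepsilon$ (nontrivial), $\chi_5=1$ (trivial, as the surviving relation does not involve $g_5$), and $\chi_{11}=\blegendre{\det}{11}$ (nontrivial). Consequently $E_{\chi,5}=1$, while $E_{\chi,2}=\tfrac{-1}{[\mathbb{Q}(E[2]):\mathbb{Q}]-1}=-\tfrac15$ and $E_{\chi,11}=\tfrac{-1}{[\mathbb{Q}(E[11]):\mathbb{Q}]-1}=-\tfrac{1}{13199}$, using $[\mathbb{Q}(E[11]):\mathbb{Q}]=120\cdot110=13200$. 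Multiplying the three factors gives $\mathfrak{C}_E=1+\big(-\tfrac15\big)\big(-\tfrac{1}{13199}\big)=1+\tfrac{1}{65995}$, and substituting back into the Euler product produces the stated formula for $C_E$ together with its numerical value.

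The only genuinely non-formal step, and the place where I expect any difficulty, is the second paragraph: one must argue carefully that the $5$--$11$ entanglement drops out upon passing to the square-free level $110$, equivalently that $\mathbb{Q}(E[5])\cap\mathbb{Q}(E[11])=\mathbb{Q}$ and that $\Phi_{110}$ is exactly $\{\pm1\}$ rather than the larger $\Phi_{m_E}\cong\mathbb{Z}/2\times\mathbb{Z}/5$. Once the surjectivity of $\psi$ on the fibres of $G(5^2)\to G(5)$ is in hand, everything else is either read off from the tabulated Galois data or is a direct substitution into Theorem~\ref{ThmCorrectionFactor}.
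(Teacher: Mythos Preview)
Your proposal is correct and follows essentially the same line as the paper's proof: pass to the square-free level $m=110$, argue that $\Phi_{110}\cong\{\pm1\}$ because the $5$--$11$ entanglement lives only at level $5^2$, identify the unique nontrivial character as $\varepsilon\cdot\blegendre{\det}{11}$, and plug into Theorem~\ref{ThmCorrectionFactor}. Your write-up is in fact more explicit than the paper's on the two points it leaves terse---the surjectivity-on-fibres argument showing $G(110)=G(22)\times G(5)$, and the numerical evaluation of the $E_{\chi,\ell}$---so there is nothing to correct.
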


\begin{proof}
As before we take $m = 2\cdot 5\cdot 11$ to be the square-free part of $m_E$. Because $E$ has abelian entanglements there is an exact sequence
$$
1 \longrightarrow G(2\cdot 5\cdot 11) \longrightarrow G(2) \times G(5) \times G(11) \xrightarrow{\phantom{a}\chi\phantom{b}} \Phi_{110} \longrightarrow 1
$$
From the description of $G(2\cdot 5^2\cdot 11)$ it follows that $G(2\cdot 5\cdot 11) = G(22) \times G(5)$, hence $\Phi_{110} \simeq \{ \pm 1\}$. It follows that if we set $\chi_2$ equal to the sign character $\varepsilon$, $\chi_{11}$ to $\blegendre{\det(g_{11})}{11}$ and $\chi_5$ be trivial, then $\chi = \chi_2\chi_5\chi_{11}$.

By Theorem \ref{ThmCorrectionFactor} we have
$$
C_E = \mathfrak{C}_E  \prod_\ell \delta_\ell.
$$
where
$$
\mathfrak{C}_E = 1 + E_{\chi_2} E_{\chi_5} E_{\chi_{11}}.
$$
From the description of $G(\ell)$ it is then straightforward to compute $\delta_\ell$ as well as $E_{\chi_\ell}$ for every $\ell$.
\end{proof}

\begin{rem}
Note that in this example, even though the Galois theory of $E$ was considerably more complicated than that of a Serre curve, at the `square-free' torsion level it was still very similar. Indeed, the subgroup $G(110) \leqslant G(2) \times G(5) \times G(11)$ was still cut out only by a quadratic character.
\end{rem}

\section{Cyclic reduction for primes in an arithmetic progression}\label{CyclicRedAP}
We now consider a variant of the problem on cyclic reduction of elliptic curves. We have been looking at the density of primes $p$ for which the reduction $\tilde{E}(\mathbb{F}_p)$ is cyclic. Here we impose the additional requirement that $p$ lie in a prescribed residue class modulo some integer $f$. This is just one of many possible generalizations one could consider, and in many of them one should still obtain a density assuming GRH. One of the difficulties that arises however, is the explicit computation of the density as an Euler product. The character sum method we have given allows us to do this in a relatively simple manner.

If we keep the same setup as in Theorem \ref{ThmCorrectionFactor}, then note that the condition we are imposing on $p$ being satisfied is again completely determined by $\rho_E(G_\mathbb{Q})$. In this case however, it is not necessarily enough to consider only the `square-free' torsion fields $\mathbb{Q}(E[\ell])$. Suppose then that we are interested in primes $p$ such that 
\begin{itemize}\label{ArtinConditions}
\item[(i)] 
$\tilde{E}(\mathbb{F}_p)$ is cyclic,
\item[(ii)]
$p\equiv a \pmod{f}$.
\end{itemize}
For each prime power $\ell^\alpha$, define
\begin{align*}
\mathcal{D}_a(\ell^\alpha) := \{A\in \GL_2(\mathbb{Z}/\ell^\alpha\mathbb{Z}) :\ \det{A} \equiv a \pmod{\ell^\alpha} \}, \\
\big(I+\ell M_2(\mathbb{Z}/\ell^\alpha \mathbb{Z}) \big)^c := \{A\in \GL_2(\mathbb{Z}/\ell^\alpha \mathbb{Z}) : A \not \equiv I \pmod{\ell} \}.
\end{align*}
Let $f=\prod_\ell \ell^{e_\ell}$ be the prime factorisation of $f$, and for each $\ell\mid f$ set
\begin{align*}
\Psi_a(\ell^{e_\ell}) :&=\mathcal{D}_a(\ell^{e_\ell}) \cap \big(I+\ell M_2(\mathbb{Z}/\ell^{e_\ell} \mathbb{Z}) \big)^c \\
							   &= \{A\in \GL_2(\mathbb{Z}/\ell^{e_\ell}\mathbb{Z}) : A \not \equiv I \pmod{\ell},\ \det{A} \equiv a \pmod{\ell^{e_\ell}} \}.
\end{align*}
Then set 
$$
S(\ell) := G(\ell^{e_\ell}) \cap \Psi_a(\ell^{e_\ell})
$$
for those $\ell$ dividing $f$, and just as in the case of the previous subsection, set $S(\ell):=G(\ell)-\{1\}$ for all other $\ell$. Then it follows that $p \nmid N_E$ satisfies conditions (i) and (ii) above if and only if for any $\ell \nmid p$ one has
\begin{itemize}
\item[(i)]
$\rho_\ell (\Frob_p) \in S(\ell)$ if $\ell \nmid f$,
\item[(ii)]
$\rho_{\ell^{e_\ell}}(\Frob_p) \in S(\ell)$ if $\ell \mid f$.
\end{itemize}
Then the density of $p$ having the `right' local behaviour at $\ell$ equals
$$
\delta_\ell = \begin{cases}
|S(\ell)|/|G(\ell)| & \text{ if $\ell \nmid f$} \\
|S(\ell)|/|G(\ell^{e_\ell})| & \text{ if $\ell \mid f$}
\end{cases}
$$
and the naive density of primes satisfying conditions (i) and (ii) equals $\prod_\ell \delta_\ell$.

To account for entanglements, we proceed more or less along the same line as the case without the condition of $p$ lying in a prescribed residue class, with some slight modifications. That is, let
$$
m:= \prod_{\ell \mid (f,m_E)} \ell^{e_\ell} \prod_{\substack{\ell \mid m_E \\ \ell \nmid f}} \ell
$$
For any square-free $d$ coprime to $m$, define
$$
\mathcal{S}_{md} := \prod_{\ell \mid md} S(\ell), \quad \mathcal{G}_{md} := \prod_{\ell \mid (f,m)} G(\ell^{e_\ell}) \prod_{\substack{\ell \mid md \\ \ell \nmid f}} G(\ell).
$$
By Corollary \ref{CorEquivEntang}
$$
G(md) \leqslant \mathcal{G}_{md}
$$
has abelian entanglements, hence we have an exact sequence
$$
1 \longrightarrow G(md) \longrightarrow \mathcal{G}_{md} \xrightarrow{\phantom{a}\psi_{md}\phantom{b}} \Phi_{md} \longrightarrow 1
$$
for some abelian group $\Phi_{md}$. We again have by (\ref{ConstantChi}) that $\Phi_{md} \simeq \Phi_m$ for any square-free $d$ coprime to $m$, and the density we are looking for is then
$$
C_E(a,f) = \lim_{d\rightarrow \infty} \frac{|\mathcal{S}_{md}\cap G(md)|}{|G(md)|} = \frac{|\mathcal{S}_m\cap G(m)|}{|G(m)|} \prod_{\ell \nmid m} \frac{|S(\ell)|}{|G(\ell)|}.
$$

\begin{thm}\label{ThmCorrectionFactorProg}
Let $\tilde{\chi}\in \widehat{\Phi}_m$ be a character of $\Phi_m$ and let $\chi$ be the character of $\mathcal{G}_m$ obtained by composing $\tilde{\chi}$ with $\psi_m$. Define $E_{\chi,\ell}$ by
$$
E_{\tilde{\chi},\ell} = \sum_{x\in S(\ell)} \frac{\chi_\ell(x)}{|S(\ell)|}.
$$
Then
$$
C_E(a,f) = \mathfrak{C}_E(a,f) \prod_{\ell} \delta_\ell
$$
where the entanglement correction factor $\mathfrak{C}_E(a,f)$ is given by
$$
\mathfrak{C}_E(a,f) = 1+\sum_{\tilde{\chi}\in \widehat{\Phi}_m-\{1\}}\prod_{\ell | m} E_{\chi,\ell}.
$$
\end{thm}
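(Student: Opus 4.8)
The plan is to deduce this theorem as a direct application of Theorem \ref{ThmGoodFrobeniusCalc}, exactly paralleling the proof of Theorem \ref{ThmCorrectionFactor}, the only new ingredient being careful bookkeeping of the mixed prime-power and prime levels. Theorem \ref{ThmGoodFrobeniusCalc} is stated for arbitrary prime-power components $G(\ell^{\alpha_\ell})$ together with subsets $S(\ell)$, so I would simply take $\alpha_\ell = e_\ell$ for $\ell \mid f$ and $\alpha_\ell = 1$ for $\ell \nmid f$. The hypotheses are already in place: by Corollary \ref{CorEquivEntang} the subgroup $G(m) \leqslant \mathcal{G}_m$ has abelian entanglements, and the short exact sequence with Goursat quotient $\Phi_m$ (together with $\Phi_{md}\simeq\Phi_m$ from (\ref{ConstantChi})) is precisely the data fed into Theorem \ref{ThmGoodFrobeniusCalc}. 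Moreover the quantity $E_{\chi,\ell}=\sum_{x\in S(\ell)}\chi_\ell(x)/|S(\ell)|$ appearing in the statement is literally the one produced by that theorem, so no further evaluation of the local averages is needed.

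Applying Theorem \ref{ThmGoodFrobeniusCalc} directly gives
$$
\frac{|\mathcal{S}_m\cap G(m)|}{|G(m)|} = \bigg(1+\sum_{\tilde{\chi}\in\widehat{\Phi}_m-\{1\}}\prod_{\ell\mid m}E_{\chi,\ell}\bigg)\frac{|\mathcal{S}_m|}{|\mathcal{G}_m|}.
$$
The key local identity to check next is $|\mathcal{S}_m|/|\mathcal{G}_m| = \prod_{\ell\mid m}\delta_\ell$. Since $\mathcal{S}_m=\prod_{\ell\mid m}S(\ell)$ and $\mathcal{G}_m=\prod_{\ell\mid(f,m)}G(\ell^{e_\ell})\prod_{\ell\mid m,\,\ell\nmid f}G(\ell)$, the ratio factors as $\prod_{\ell\mid(f,m)}|S(\ell)|/|G(\ell^{e_\ell})|\cdot\prod_{\ell\mid m,\,\ell\nmid f}|S(\ell)|/|G(\ell)|$, and this matches $\prod_{\ell\mid m}\delta_\ell$ precisely because the definition of $\delta_\ell$ uses the denominator $|G(\ell^{e_\ell})|$ when $\ell\mid f$ and $|G(\ell)|$ otherwise. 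Here one uses that the primes dividing both $m$ and $f$ are exactly those dividing $(f,m_E)$, so the exponent $e_\ell$ appearing in $m$ is the correct one.

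Finally I would combine this with the expression for the density established just before the statement, namely $C_E(a,f)=\tfrac{|\mathcal{S}_m\cap G(m)|}{|G(m)|}\prod_{\ell\nmid m}\delta_\ell$. Substituting the two displayed identities yields
$$
C_E(a,f)=\bigg(1+\sum_{\tilde{\chi}\in\widehat{\Phi}_m-\{1\}}\prod_{\ell\mid m}E_{\chi,\ell}\bigg)\prod_{\ell\mid m}\delta_\ell\prod_{\ell\nmid m}\delta_\ell,
$$
and recognizing the parenthesized factor as $\mathfrak{C}_E(a,f)$ and the two tail products as $\prod_\ell\delta_\ell$ completes the argument. The only genuine obstacle is the second paragraph's bookkeeping: one must confirm that Theorem \ref{ThmGoodFrobeniusCalc} applies verbatim to the inhomogeneous level structure and that the local size ratio reproduces $\prod_{\ell\mid m}\delta_\ell$ with the level-dependent denominators; everything else is formal manipulation of the already-established limit formula.
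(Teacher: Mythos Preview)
Your proposal is correct and follows exactly the approach the paper takes: the paper's own proof is the single sentence ``The proof is exactly as that of Theorem \ref{ThmGoodFrobeniusCalc} with the obvious modifications.'' You have simply spelled out those modifications, verifying that Theorem \ref{ThmGoodFrobeniusCalc} applies at the mixed levels $\alpha_\ell=e_\ell$ or $1$, that $|\mathcal{S}_m|/|\mathcal{G}_m|=\prod_{\ell\mid m}\delta_\ell$, and then combining with the limit formula for $C_E(a,f)$ established just before the statement.
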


\begin{proof}
The proof is exactly as that of Theorem \ref{ThmGoodFrobeniusCalc} with the obvious modifications.
\end{proof}

It follows from the previous theorem that in order to evaluate the correction factors $\mathfrak{C}_E(a,f)$ it suffices to compute the order of $S(\ell)$ as well as the average value of the $\chi_\ell$ on $S(\ell)$.

\subsection{Serre curves}
In what follows we again consider the example of Serre curves. To simplify the following proofs we will henceforth assume $a$ and $f$ are coprime integers. If not, then for a prime $\ell$ dividing $(a,f)$ we obtain $|\Psi_a(\ell^{e_\ell})|=0$ hence $|S(\ell)|=0$ and $C_E(a,f)=0$, which we take to mean the conditions imposed are satisfied for only finitely many $p$.

\begin{lem}\label{deltaEll}
Let $E/\mathbb{Q}$ be a Serre curve, and let $a$ and $f$ be coprime positive integers. Let $D$ be the discriminant of $\mathbb{Q}(\sqrt{\Delta})$ where $\Delta$ is the discriminant of any Weierstrass model of $E$ over $\mathbb{Q}$. Suppose that $|D| \neq 4,8$. Then
$$
\delta_\ell = 
\begin{cases}
\frac{1}{\phi(\ell^{e_\ell})} & \text{ if $a\not\equiv 1 \pmod{\ell}$ and $\ell \mid f$ } \\
\frac{1}{\phi(\ell^{e_\ell})}\Big(1-\frac{1}{\ell(\ell-1)(\ell+1)}\Big) & \text{ if $a \equiv 1 \pmod{\ell}$ and $\ell \mid f$ } \\
1-\frac{1}{(\ell^2-1)(\ell^2-\ell)} & \text{ if $\ell \nmid f$. }
\end{cases}
$$
\end{lem}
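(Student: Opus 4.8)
The plan is to compute $\delta_\ell$ directly from its definition by counting elements of $S(\ell)$. Since $E$ is a Serre curve, the key structural input is that $G(\ell^{e_\ell}) = \GL_2(\mathbb{Z}/\ell^{e_\ell}\mathbb{Z})$ for every $\ell$ and every power, with one crucial exception coming from the $2$-adic and discriminant entanglement encoded in $H_E$. The hypothesis $|D|\neq 4,8$ is precisely what guarantees that no $\ell^{e_\ell}$ dividing $f$ interacts with the index-$2$ condition defining $H_E$ in a way that would alter the local count; concretely it ensures that the signature/quadratic-character constraint lives at the squarefree level and does not cut down $G(\ell^{e_\ell})$ for the relevant prime powers, so that each local group is the full general linear group. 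I would state this reduction explicitly at the outset and then treat the three cases separately.

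For the case $\ell\nmid f$, the recipe gives $S(\ell)=G(\ell)-\{1\}=\GL_2(\mathbb{Z}/\ell\mathbb{Z})-\{I\}$, so
$$
\delta_\ell = \frac{|\GL_2(\mathbb{Z}/\ell\mathbb{Z})|-1}{|\GL_2(\mathbb{Z}/\ell\mathbb{Z})|} = 1-\frac{1}{(\ell^2-1)(\ell^2-\ell)},
$$
using $|\GL_2(\mathbb{Z}/\ell\mathbb{Z})|=(\ell^2-1)(\ell^2-\ell)$. For the two cases with $\ell\mid f$, I would work with $S(\ell)=\Psi_a(\ell^{e_\ell})=\{A\in\GL_2(\mathbb{Z}/\ell^{e_\ell}\mathbb{Z}): A\not\equiv I\ (\mathrm{mod}\ \ell),\ \det A\equiv a\ (\mathrm{mod}\ \ell^{e_\ell})\}$ and the denominator $|G(\ell^{e_\ell})|=|\GL_2(\mathbb{Z}/\ell^{e_\ell}\mathbb{Z})|$. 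The determinant map $\det\colon\GL_2(\mathbb{Z}/\ell^{e_\ell}\mathbb{Z})\to(\mathbb{Z}/\ell^{e_\ell}\mathbb{Z})^\times$ is surjective with fibres of equal size, so the determinant condition alone contributes a factor $1/\phi(\ell^{e_\ell})$. The remaining task is to count, among matrices of fixed determinant $a$, how many reduce to $I$ modulo $\ell$, i.e. lie in $I+\ell M_2(\mathbb{Z}/\ell^{e_\ell}\mathbb{Z})$. When $a\not\equiv 1\pmod\ell$ there are none (any $A\equiv I$ has $\det A\equiv 1$), so the entire fibre survives and $\delta_\ell=1/\phi(\ell^{e_\ell})$. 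When $a\equiv 1\pmod\ell$ one must subtract the count of determinant-$a$ matrices in $I+\ell M_2$, which is where the factor $\bigl(1-\frac{1}{\ell(\ell-1)(\ell+1)}\bigr)$ emerges.

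The main obstacle is the last count: determining $|\{A\in I+\ell M_2(\mathbb{Z}/\ell^{e_\ell}\mathbb{Z}):\det A\equiv a\}|$ relative to $|\mathcal{D}_a(\ell^{e_\ell})|$. I would handle this by parametrizing $A=I+\ell B$ with $B\in M_2(\mathbb{Z}/\ell^{e_\ell-1}\mathbb{Z})$ (so the principal congruence subgroup has order $\ell^{4(e_\ell-1)}$), computing $\det(I+\ell B)=1+\ell\,\tr B+\ell^2\det B$, and solving the determinant equation for $B$. The ratio of the number of solutions to $|\mathcal{D}_a(\ell^{e_\ell})|=|\GL_2|/\phi(\ell^{e_\ell})$ should yield $\frac{1}{\ell(\ell-1)(\ell+1)}$ after simplification, and subtracting this from $1$ gives the stated formula. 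I expect the bookkeeping with the determinant equation over $\mathbb{Z}/\ell^{e_\ell}\mathbb{Z}$ — in particular verifying that the trace-plus-higher-order condition has the right number of solutions independent of $e_\ell$ — to require the most care, and I would isolate it as a short counting lemma so the three-case structure of the statement remains transparent.
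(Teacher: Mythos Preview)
Your proposal is correct and follows the same three-case decomposition as the paper: the case $\ell\nmid f$ and the case $a\not\equiv 1\pmod\ell$ are handled identically. The only real difference is in the execution of the $a\equiv 1\pmod\ell$ case. The paper does not expand $\det(I+\ell B)$ and solve a congruence; instead it observes that $\mathbb{Q}(E[\ell])\cap\mathbb{Q}(\zeta_{\ell^{e_\ell}})=\mathbb{Q}(\zeta_\ell)$, so that the condition $\det A\equiv a\pmod{\ell^{e_\ell}}$ and the condition $A\bmod\ell$ are independent once their common constraint $\det A\equiv 1\pmod\ell$ is fixed. Hence, conditional on $\det A=a$, the image $A\bmod\ell$ is uniform over $\SL_2(\mathbb{F}_\ell)$, and the fraction equal to $I$ is $1/|\SL_2(\mathbb{F}_\ell)|=1/\ell(\ell-1)(\ell+1)$.

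Your direct count reaches the same number, but you can shortcut the determinant-equation bookkeeping you flagged as the delicate step: the principal congruence subgroup $I+\ell M_2(\mathbb{Z}/\ell^{e_\ell}\mathbb{Z})$ is a group on which $\det$ surjects onto $1+\ell\mathbb{Z}/\ell^{e_\ell}\mathbb{Z}$ with equal fibres of size $\ell^{3(e_\ell-1)}$, while $|\mathcal{D}_a(\ell^{e_\ell})|=|\SL_2(\mathbb{Z}/\ell^{e_\ell}\mathbb{Z})|=\ell^{3(e_\ell-1)}\cdot\ell(\ell-1)(\ell+1)$; the ratio is immediate. Your reading of the hypothesis $|D|\neq 4,8$ (that it forces $G(\ell^{e_\ell})=\GL_2(\mathbb{Z}/\ell^{e_\ell}\mathbb{Z})$ for every $\ell$) is equivalent to the paper's field-intersection statement and is in fact only needed at $\ell=2$.
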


\begin{comment}
$$
\prod_\ell \delta_\ell = \frac{1}{\phi(f)} \prod_{\ell \mid (a-1,f)} \bigg( 1- \frac{1}{\ell(\ell-1)(\ell+1)} \bigg) \prod_{\ell \nmid f} \bigg( 1-\frac{1}{(\ell^2-1)(\ell^2-\ell)} \bigg). 
$$
\end{comment}

\begin{proof}
If $\ell \nmid f$ then as before we obtain the local density $\delta_\ell = 1-1/(\ell^2-1)(\ell^2-\ell)$. At $\ell \mid f$ we consider the two cases. If $a \not \equiv 1 \pmod{\ell}$ then 
$$
S(\ell) = \mathcal{D}_a(\ell^{e_\ell})
$$
since any element with determinant $a \not\equiv 1$ cannot be trivial mod $\ell$.  It follows that for such $\ell$ one has $\delta_\ell = 1/\phi(\ell^{e_\ell})$. If $a\equiv 1 \pmod{\ell}$ then we need to count the fraction of elements of $\mathcal{D}_a(\ell^{e_\ell})$ which are non-trivial mod $\ell$. There is a surjective map $G(\ell) \rightarrow \big(\mathbb{Z}/\ell\mathbb{Z}\big)^*$ of degree $\ell(\ell-1)(\ell+1)$, and $\mathbb{Q}(E[\ell])\cap \mathbb{Q}(\zeta_{\ell^{e_\ell}}) = \mathbb{Q}(\zeta_\ell)$ (since $|D|\neq 4,8$) so it follows that this fraction is precisely $1-1/\ell(\ell-1)(\ell+1)$, as desired.
\end{proof}

\begin{lem}\label{Qi}
Let $E$, $a$ and $f$ be as in Lemma \ref{deltaEll}. Suppose further that $|D| = 4$. Then
$$
\delta_2 = 
\begin{cases}
\frac{1}{\phi(2^{e_2})} & \text{ if $a\equiv 3 \pmod{4}$ and $4\mid f$} \\
\frac{1}{\phi(2^{e_2})} \big( 1-\frac{1}{3} \big) & \text{ if $a\equiv 1 \pmod{4}$ and $4\mid f$} \\
\frac{5}{6} & \text{ if $4\nmid f$}.
\end{cases}
$$
\end{lem}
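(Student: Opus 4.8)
The plan is to feed the explicit Galois image of a Serre curve at the prime $2$ into the definitions of $S(2)$ and $\delta_2$. When $|D| = 4$ we have $\mathbb{Q}(\sqrt{\Delta}) = \mathbb{Q}(i) = \mathbb{Q}(\zeta_4)$, so $M_E = \mathrm{lcm}(|D|,2) = 4$, and for every $e_2 \geq 1$ the group $G(2^{e_2})$ is the index-$2$ subgroup
$$
G(2^{e_2}) = \{A \in \GL_2(\mathbb{Z}/2^{e_2}\mathbb{Z}) : \varepsilon(A \bmod 2) = \alpha(\det A \bmod 4)\},
$$
where $\alpha$ is the quadratic character of $(\mathbb{Z}/4\mathbb{Z})^\times$ with $\alpha(1) = 1$, $\alpha(3) = -1$. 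At the start I would record the sizes $|G(2^{e_2})| = 3 \cdot 2^{4(e_2-1)}$ and $|K| = 2^{4(e_2-1)}$ for the principal congruence subgroup $K = \ker(\GL_2(\mathbb{Z}/2^{e_2}\mathbb{Z}) \to \GL_2(\mathbb{Z}/2\mathbb{Z}))$, together with $\phi(2^{e_2}) = 2^{e_2-1}$; and I would use that, under $\GL_2(\mathbb{Z}/2\mathbb{Z}) \simeq S_3$, the sign $\varepsilon$ equals $-1$ on the three transpositions and $+1$ on $I$ together with the two $3$-cycles.

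For $4 \nmid f$ (so $e_2 \leq 1$) the computation is immediate. If $2 \nmid f$ then $S(2) = G(2) - \{1\}$ has $5$ elements and $\delta_2 = 5/6$; if $2 \| f$ then, since $a$ is coprime to $f$ and hence odd, the determinant clause of $\Psi_a(2)$ is vacuous in $\GL_2(\mathbb{Z}/2\mathbb{Z})$, so again $S(2) = \GL_2(\mathbb{Z}/2\mathbb{Z}) - \{I\}$ and $\delta_2 = 5/6$.

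Next I would treat $4 \mid f$ (so $e_2 \geq 2$ and $a$ is odd), splitting on $a \bmod 4$. If $a \equiv 3 \pmod 4$, then any $A \in G(2^{e_2})$ with $\det A \equiv a$ has $\varepsilon(A \bmod 2) = \alpha(3) = -1$, so $A \bmod 2$ is a transposition and in particular $A \not\equiv I \pmod 2$ is automatic; hence $S(2)$ is simply the determinant fibre $\{A \in G(2^{e_2}) : \det A \equiv a\}$. Since $\det \colon G(2^{e_2}) \to (\mathbb{Z}/2^{e_2}\mathbb{Z})^\times$ is a surjective homomorphism (the determinant is the cyclotomic character, onto for a Serre curve), each fibre has size $|G(2^{e_2})|/\phi(2^{e_2})$, whence $\delta_2 = 1/\phi(2^{e_2})$.

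The case $a \equiv 1 \pmod 4$ is the crux and where I expect the real work. Here the Serre condition forces $\varepsilon(A \bmod 2) = 1$, so $A \bmod 2 \in \{I, c_1, c_2\}$ with $c_1, c_2$ the $3$-cycles, and imposing $A \not\equiv I \pmod 2$ removes exactly the elements reducing to $I$. I would therefore write
$$
|S(2)| = |\{A \in G(2^{e_2}) : \det A \equiv a\}| - |\{A \in G(2^{e_2}) : \det A \equiv a,\ A \equiv I \bmod 2\}|,
$$
the first term being $|G(2^{e_2})|/\phi(2^{e_2})$ as above. For the second term, since $a \equiv 1 \pmod 4$ every $A \in K$ with $\det A \equiv a$ automatically satisfies the Serre condition ($\varepsilon(I) = 1 = \alpha(1)$) and so lies in $G(2^{e_2})$; thus it equals $|\{A \in K : \det A \equiv a\}|$, and here the main obstacle is to confirm that $\det \colon K \to (\mathbb{Z}/2^{e_2}\mathbb{Z})^\times$ is also surjective — which I would check by exhibiting the matrices $I + 2\,\mathrm{diag}(t,0)$, whose determinants $1 + 2t$ run over all units — so that this fibre has size $|K|/\phi(2^{e_2})$. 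Combining gives $|S(2)| = (|G(2^{e_2})| - |K|)/\phi(2^{e_2})$ and hence
$$
\delta_2 = \frac{|S(2)|}{|G(2^{e_2})|} = \frac{1}{\phi(2^{e_2})}\Big(1 - \frac{|K|}{|G(2^{e_2})|}\Big) = \frac{1}{\phi(2^{e_2})}\Big(1 - \frac{1}{3}\Big),
$$
using $|K|/|G(2^{e_2})| = 2^{4(e_2-1)}/(3\cdot 2^{4(e_2-1)}) = 1/3$.
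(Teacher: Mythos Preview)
Your proof is correct and follows essentially the same approach as the paper's, which also splits on $e_2 < 2$ versus $e_2 \geq 2$ and on $a \bmod 4$, using that $\mathbb{Q}(\sqrt{\Delta}) = \mathbb{Q}(i)$; the paper phrases the key step field-theoretically (via $\mathbb{Q}(\zeta_{2^{e_2}}) \cap \mathbb{Q}(E[2]) = \mathbb{Q}(i)$ and $[\mathbb{Q}(E[2]):\mathbb{Q}(i)] = 3$), while you phrase it matrix-theoretically via the explicit Serre condition, but these are the same argument. One minor slip: your description of $G(2^{e_2})$ as the index-$2$ subgroup of $\GL_2(\mathbb{Z}/2^{e_2}\mathbb{Z})$ is valid only for $e_2 \geq 2$ (for $e_2 = 1$ one has $G(2) = \GL_2(\mathbb{Z}/2\mathbb{Z})$, as you in fact use), but this does not affect your case analysis.
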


\begin{proof}
The assumption on $D$ implies that $\mathbb{Q}(\sqrt{\Delta}) = \mathbb{Q}(i)$ and $m_E = 4$. Recall that $2^{e_2} || f$ is the highest power of $2$ dividing $f$.  If $e_2 \geqslant 2$ then $a$ is odd, hence is $1$ or $3$ mod $4$. Note that $\mathbb{Q}(\zeta_{2^{e_2}})\cap \mathbb{Q}(E[2]) = \mathbb{Q}(i)$. Now the fraction of elements $A\in G(2^{e_2})$ such that $A\in \mathcal{D}_a(2^{e_2})$ equals $1/\phi(2^{e_2})$. If $a\equiv 3 \pmod{4}$ then any such $A\in \mathcal{D}_a(2^{e_2})$ acts non-trivially on $\mathbb{Q}(i)$, hence is non-trivial mod $2$. It follows that $S(2) = \mathcal{D}_a(2^{e_2})$ and $\delta_2 = 1/\phi(2^{e_2})$. If $a\equiv 1 \pmod{4}$, then because $[\mathbb{Q}(E[2]) : \mathbb{Q}(i)] =3$ exactly $1-1/3$ of the elements in $A\in \mathcal{D}_a(2^{e_2})$ are in $S(2)$. Finally suppose $e_2 < 2$. Then the only condition at $2$ is being non-trivial mod $2$, and the conclusion follows.
\end{proof}

\begin{lem}\label{sqrttwo}
Let $E$, $a$ and $f$ be as in Lemma \ref{deltaEll}. Suppose further that $|D| = 8$. Then
\begin{itemize}
\item[(i)] If $\mathbb{Q}(\sqrt{\Delta}) = \mathbb{Q}(\sqrt{2})$ then
$$
\delta_2 = 
\begin{cases}
\frac{1}{\phi(2^{e_2})} & \text{ if $a\equiv 3$ or $ 5 \pmod{8}$ and $8\mid f$} \\
\frac{1}{\phi(2^{e_2})} \big( 1-\frac{1}{3} \big) & \text{ if $a\equiv 1$ or $7 \pmod{8}$ and $8\mid f$} \\
\frac{5}{6} & \text{ if $8\nmid f$}.
\end{cases}
$$
\item[(ii)]  $\mathbb{Q}(\sqrt{\Delta}) = \mathbb{Q}(\sqrt{-2})$ then
$$
\delta_2 = 
\begin{cases}
\frac{1}{\phi(2^{e_2})} & \text{ if $a\equiv 5$ or $ 7 \pmod{8}$ and $8\mid f$} \\
\frac{1}{\phi(2^{e_2})} \big( 1-\frac{1}{3} \big) & \text{ if $a\equiv 1$ or $3 \pmod{8}$ and $8\mid f$} \\
\frac{5}{6} & \text{ if $8\nmid f$}.
\end{cases}
$$
\end{itemize}
\end{lem}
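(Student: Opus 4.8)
The plan is to carry over the proof of Lemma~\ref{Qi} almost verbatim, replacing the role of $\mathbb{Q}(i)\subset\mathbb{Q}(\zeta_4)$ by $\mathbb{Q}(\sqrt{\pm2})\subset\mathbb{Q}(\zeta_8)$ and noting that the entanglement is now visible only at level $8$ rather than level $4$. First I would record the structural facts. Since $|D|=8\equiv0\pmod4$, the same reasoning as in Proposition~\ref{SerreCurveFirstCase} gives $M_E=\mathrm{lcm}(|D|,2)=8$ and $m_E=8$; as $E$ is a Serre curve, $\Gal(\mathbb{Q}(E[2])/\mathbb{Q})\cong\GL_2(\mathbb{Z}/2\mathbb{Z})\cong S_3$, whose unique quadratic subfield is $\mathbb{Q}(\sqrt\Delta)=\mathbb{Q}(\sqrt{\pm2})$. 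The defining relation of a Serre curve, $\varepsilon\circ\rho_{E,2}=\alpha\circ\det\rho_{E,8}$, encodes the entanglement through a quadratic character $\alpha\colon(\mathbb{Z}/8\mathbb{Z})^\times\to\{\pm1\}$ of conductor $8$, the one cutting out $\mathbb{Q}(\sqrt{\pm2})$ inside $\mathbb{Q}(\zeta_8)$. Because $\mathbb{Q}(\sqrt{\pm2})$ is abelian over $\mathbb{Q}$ while $S_3$ has abelianization $\mathbb{Z}/2\mathbb{Z}$, for $e_2\geq3$ one has $\mathbb{Q}(\zeta_{2^{e_2}})\cap\mathbb{Q}(E[2])=\mathbb{Q}(\sqrt{\pm2})$, exactly paralleling the identity $\mathbb{Q}(\zeta_{2^{e_2}})\cap\mathbb{Q}(E[2])=\mathbb{Q}(i)$ used in the $|D|=4$ case.

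The step that genuinely distinguishes (i) from (ii) is the explicit identification of $\alpha$, which is pure quadratic reciprocity. The field $\mathbb{Q}(\sqrt2)$ splits at $p$ exactly when $p\equiv1,7\pmod8$ and is inert when $p\equiv3,5\pmod8$, whereas $\mathbb{Q}(\sqrt{-2})$ splits when $p\equiv1,3\pmod8$ and is inert when $p\equiv5,7\pmod8$; equivalently $\alpha(a)=+1$ on $\{1,7\}$ respectively $\{1,3\}$, and $\alpha(a)=-1$ on $\{3,5\}$ respectively $\{5,7\}$. These are precisely the residue partitions appearing in the two displayed formulas.

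For $8\mid f$ (that is $e_2\geq3$) the count then runs as in Lemma~\ref{Qi}. Any $A\in G(2^{e_2})$ with $\det A\equiv a\pmod{2^{e_2}}$ satisfies $\varepsilon(A\bmod2)=\alpha(a\bmod8)$. Since the determinant-$a$ fibre in $\GL_2(\mathbb{Z}/2^{e_2}\mathbb{Z})$ is a coset of $\SL_2(\mathbb{Z}/2^{e_2}\mathbb{Z})$, which reduces uniformly onto $\GL_2(\mathbb{Z}/2\mathbb{Z})\cong S_3$, the fraction of $A\in G(2^{e_2})$ with prescribed determinant $a$ equals $1/\phi(2^{e_2})$. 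If $\alpha(a)=-1$ then $\varepsilon(A\bmod2)=-1$, so $A\not\equiv I\pmod2$ automatically, whence $S(2)=\mathcal{D}_a(2^{e_2})\cap G(2^{e_2})$ and $\delta_2=1/\phi(2^{e_2})$. If $\alpha(a)=+1$ then $A\bmod2$ lies in the order-$3$ kernel of $\varepsilon$, inside which the identity appears with frequency $1/3$; discarding the trivial reductions multiplies the count by $1-\tfrac13$ and yields $\delta_2=\tfrac{1}{\phi(2^{e_2})}\bigl(1-\tfrac13\bigr)$.

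The case $8\nmid f$ is where I expect the main obstacle. When $e_2\leq1$ the argument is clean: the entanglement is invisible, $G(2^{e_2})=\GL_2(\mathbb{Z}/2^{e_2}\mathbb{Z})$, the determinant condition at $2$ is vacuous, and only non-triviality modulo $2$ survives, giving $\delta_2=5/6$ as in Lemma~\ref{Qi}. The delicate sub-case is $4\| f$ ($e_2=2$): here $\mathbb{Q}(\sqrt{\pm2})\not\subset\mathbb{Q}(\zeta_4)$, so the entanglement is still invisible and $G(4)=\GL_2(\mathbb{Z}/4\mathbb{Z})$, yet the determinant condition modulo $4$ is now nonvacuous. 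One must check directly whether it introduces an extra factor $1/\phi(4)$, since unlike the $|D|=4$ situation this condition is unlinked to the quadratic entanglement; this is the one place where the bookkeeping must be verified from scratch rather than imported from Lemma~\ref{Qi}.
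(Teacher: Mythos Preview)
Your approach is identical to the paper's: both follow the template of Lemma~\ref{Qi}, replacing $\mathbb{Q}(i)\subset\mathbb{Q}(\zeta_4)$ by $\mathbb{Q}(\sqrt{\pm 2})\subset\mathbb{Q}(\zeta_8)$ and the mod-$4$ residue partition by the appropriate mod-$8$ partition coming from the splitting of $\pm 2$. Your treatment of the cases $e_2\geq 3$ and $e_2\leq 1$ matches the paper's exactly, only spelled out in more detail; the paper's proof says literally ``proceed similarly to Lemma~\ref{Qi}'' and records the key fact $\mathbb{Q}(\zeta_{2^{e_2}})\cap\mathbb{Q}(E[2])=\mathbb{Q}(\sqrt{\pm 2})$ for $e_2\geq 3$.

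You are right to isolate the sub-case $4\,\|\,f$ as the genuine obstacle, and in fact the paper's proof does not address it at all. Your suspicion is correct: when $e_2=2$ the conductor-$8$ entanglement is invisible, so $G(4)=\GL_2(\mathbb{Z}/4\mathbb{Z})$, but the determinant condition modulo $4$ is nonvacuous. Of the $16$ matrices $A\equiv I\pmod 2$ in $\GL_2(\mathbb{Z}/4\mathbb{Z})$, exactly $8$ have each determinant class, so $|\Psi_a(4)|=48-8=40$ and $\delta_2=40/96=5/12=\tfrac{1}{\phi(4)}\cdot\tfrac{5}{6}$, not $5/6$. Thus the third line of the displayed formula, as literally stated, fails when $4\,\|\,f$; both your proposal and the paper's proof leave this gap open, but you at least flagged it.
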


\begin{proof}
We proceed similarly to Lemma \ref{Qi}. The assumption on $D$ implies that $\mathbb{Q}(\sqrt{\Delta}) = \mathbb{Q}(\sqrt{\pm 2})$. If $e_2\geqslant 3$ then in this case $\mathbb{Q}(\zeta_{2^{e_2}})\cap \mathbb{Q}(E[2]) = \mathbb{Q}(\sqrt{\pm 2})$. In case (i), elements in $\mathcal{D}_a(2^{e_2})$ act non-trivially on $\mathbb{Q}(\sqrt{2})$ if and only if $a\equiv 3$ or $5 \pmod{8}$, hence the conclusion. Case (ii) follows from the same argument.
\end{proof}

In what remains of this section we will deduce the correction factor $\mathfrak{C}_E(a,f)$. In the following lemmas we compute the local factors $E_\ell$ for the different primes $\ell$ dividing $m_E$. As is often the case, the prime $2$ requires special consideration and we split the computation of the local correction factor $E_2$ into various cases. Keep the same notation for $E,a,f$ and $D$, and suppose further that $|D|\neq 4,8$. Then $m_E$ contains at least one odd prime factor and we have an exact sequence
$$
1 \longrightarrow G(m) \longrightarrow \prod_{\ell \mid (f,m_E)} G(\ell^{e_\ell}) \prod_{\substack{\ell \mid m_E \\ \ell \nmid f}} G(\ell)  \xrightarrow{\phantom{a}\chi\phantom{b}} \{\pm 1\} \longrightarrow 1
$$
where $\chi = \prod_\ell \chi_\ell$ is a product of characters $\chi_\ell$. Here $\chi_\ell$ is given by the composition $G(\ell^{e_\ell}) \rightarrow G(\ell) \xrightarrow{\det} \big(\mathbb{Z}/\ell \mathbb{Z}\big)^* \rightarrow \{\pm 1\}$ and $\chi_2$ is the character corresponding to the quadratic extension $\mathbb{Q}(E[2^{\alpha_2}])\cap \mathbb{Q}(E[m/2^{\alpha_2}])$, where $2^{\alpha_2} || m$. When $e_2=1$ for instance, $\chi_2$ is the signature map $\GL_2(\mathbb{Z}/2\mathbb{Z})\rightarrow \{\pm 1\}$, corresponding to the quadratic extension $\mathbb{Q}(\sqrt{\Delta})$.

\begin{lem}\label{OneModFour}
Suppose $\ord_2(D)=0$. Then $E_2 = -1/5$.
\end{lem}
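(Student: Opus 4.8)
The plan is to compute the local correction factor $E_2 = E_{\chi,2}$ directly from its definition in Theorem \ref{ThmCorrectionFactorProg}, namely
$$
E_2 = \frac{1}{|S(2)|}\sum_{x\in S(2)} \chi_2(x),
$$
in the specific case $\ord_2(D)=0$. The hypothesis $\ord_2(D)=0$ means $D$ is odd, so $D\equiv 1\pmod 4$ and $2 \nmid f$ is not forced, but crucially it means $e_2 = 0$ is impossible to assume; rather, since $D$ is odd the prime $2$ does not divide $f$ through the discriminant, and the relevant $2$-adic behaviour is governed by the full square-free case. Let me re-read: $\ord_2(D)=0$ puts us outside the special cases $|D|=4,8$, so $m_E=2|D|$ is square-free and $e_2$ is whatever power of $2$ divides $f$; but the character $\chi_2$ here is the signature map on $G(2)=\GL_2(\mathbb{Z}/2\mathbb{Z})$.

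First I would identify $S(2)$ and $\chi_2$ explicitly. Since $\ord_2(D)=0$, the prime $2$ contributes to $m$ only at the square-free level, so $S(2)=G(2)-\{1\}=\GL_2(\mathbb{Z}/2\mathbb{Z})\setminus\{I\}$, a set of $5$ elements (as $|\GL_2(\mathbb{Z}/2\mathbb{Z})|=6$), and $\chi_2=\varepsilon$ is the signature character under the isomorphism $\GL_2(\mathbb{Z}/2\mathbb{Z})\simeq S_3$. Next I would evaluate the character sum. Using that $\chi_2$ is a nontrivial character on $G(2)$, the orthogonality relation gives $\sum_{x\in G(2)}\chi_2(x)=0$, so
$$
\sum_{x\in S(2)}\chi_2(x) = \Big(\sum_{x\in G(2)}\chi_2(x)\Big) - \chi_2(I) = 0 - 1 = -1.
$$
Dividing by $|S(2)|=5$ then yields $E_2 = -1/5$, exactly as claimed.

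The computation is essentially the same trick already used at the end of the proof of Theorem \ref{ThmCorrectionFactor}: a nontrivial character sums to zero over the whole group, so restricting to the complement of the identity simply removes the contribution $\chi_2(I)=1$. The only genuine thing to verify is that under the hypothesis $\ord_2(D)=0$ the set $S(2)$ really is $G(2)-\{1\}$ (rather than a determinant-restricted subset) and that $\chi_2$ really is a nontrivial character on $G(2)$; both follow from $2\nmid f$ being the operative situation when $D$ is odd, together with $\chi_2=\varepsilon$ being the nontrivial signature map. I do not expect a serious obstacle here — the result is a short, self-contained evaluation — but I would be careful to state precisely why $2\nmid f$ (equivalently why $S(2)=G(2)-\{1\}$) in the regime $\ord_2(D)=0$, since that is the one place where the hypothesis is actually used.
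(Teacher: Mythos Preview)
Your argument correctly handles the case $2\nmid f$, but the hypothesis $\ord_2(D)=0$ does \emph{not} force $2\nmid f$. The integer $f$ is the modulus of the arithmetic progression and is completely independent of $D$; nothing prevents $4\mid f$ or $8\mid f$ while $D$ is odd. When $2\mid f$ with $e_2\geqslant 1$, the definition of $m$ in this section gives the $2$-component as $2^{e_2}$, not $2$, so that $S(2)\subset G(2^{e_2})$ is the determinant-restricted set
\[
S(2)=\{A\in\GL_2(\mathbb{Z}/2^{e_2}\mathbb{Z}):A\not\equiv I\pmod 2,\ \det A\equiv a\pmod{2^{e_2}}\},
\]
not simply $G(2)-\{1\}$. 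Your computation does not address this case, and the sentence ``both follow from $2\nmid f$ being the operative situation when $D$ is odd'' is exactly the unjustified step.

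The paper's proof treats this missing case as follows. Since $D$ is odd, $\chi_2$ is still the signature map (pulled back along reduction mod $2$), and the crucial fact is that $\mathbb{Q}(E[2])\cap\mathbb{Q}(\zeta_{2^{e_2}})=\mathbb{Q}$: the unique quadratic subfield of $\mathbb{Q}(E[2])$ is $\mathbb{Q}(\sqrt{\Delta})=\mathbb{Q}(\sqrt{D})$, which for odd $D$ is not contained in $\mathbb{Q}(\zeta_{2^\infty})$. Hence the surjection $G(2^{e_2})\to\Gal(\mathbb{Q}(E[2],\zeta_{2^{e_2}})/\mathbb{Q})\simeq G(2)\times(\mathbb{Z}/2^{e_2}\mathbb{Z})^\times$ has fibres of constant size, $\chi_2$ factors through the $G(2)$-coordinate, and the image $S'(2)$ of $S(2)$ consists of the five elements $(\sigma,a)$ with $\sigma\neq 1$. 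One then gets $\sum_{x\in S'(2)}\chi_2'(x)=-1$ and $E_2=-1/5$ exactly as in your computation. So your orthogonality trick is the right endgame, but you must first reduce to a five-element set via this disjointness argument; that reduction is where $\ord_2(D)=0$ is genuinely used.
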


\begin{proof}
Since $D\equiv 1 \pmod{4}$ it follows that $m_E=2|D|$ and $\chi_2$ is the signature map. Let $2^{e_2} || f$ be the largest power of $2$ dividing $f$. If $e_2\leqslant 0$ then $E_2 = -1/5$ by the same argument as in Proposition \ref{SerreEntanglement}. If $e_2>1$, then $S(2)\subset G(e^{e_2})$ consists of the elements of $\mathcal{D}_a(2^{e_2})$ which are non-trivial mod $2$. 

Because $m_E = 2|D|$ with $D$ odd, $\chi_2$ is the signature map, hence it factors through the surjection $G(2^{e_2}) \rightarrow \Gal(\mathbb{Q}(E[2]), \zeta_{2^{e_2}})$, so we have a commutative diagram
\begin{center}
\begin{tikzpicture}[auto]
\node (a) at (0,2) {$G(2^{e_2})$};
\node (b) at (0,0) {$\Gal(\mathbb{Q}(E[2],\zeta_{2^{e_2}})$};
\node (c) at (2,2) {$\{\pm 1\}$};

\node at (-.3,1.2) {};
\node at (0.7,1.2) {$\chi_2'$};

\draw[->,thick] (a) -- (b);
\draw[->,thick] (a) -- (c);
\draw[->,thick] (b) -- (c);
\end{tikzpicture}.
\end{center}
Let $S'(2)$ be the image of $S(2)$ under the surjection $G(2^{e_2}) \rightarrow \Gal(\mathbb{Q}(E[2]), \zeta_{2^{e_2}})$. Then note that because $\mathbb{Q}(\zeta_{2^{e_2}})\cap\mathbb{Q}(E[2]) = \mathbb{Q}$, for each $\sigma\in G(2)$ there is a unique $\sigma'\in \Gal(\mathbb{Q}(E[2]), \zeta_{2^{e_2}})$ such that $\sigma(\zeta_{2^{e_2}}) = \zeta_{2^{e_2}}^a$ and $\sigma' \equiv \sigma \pmod{2}$. It follows that
$$
\sum_{x\in S'(2)} \chi'(x) = -1
$$
and the conclusion follows.
\end{proof}

\begin{lem}\label{ordDtwo}
Suppose $\ord_2(D) = 2$. We have
\begin{itemize}
\item[(i)]
If $|D|\neq 4$ and $4\mid f$ then 
$$
E_2 = -\blegendre{a}{4} \frac{1}{5}.
$$
\item[(ii)]
If $|D|=4$ or $4\nmid f$ then 
 $$
 E_2=0.
 $$
\end{itemize}
\end{lem}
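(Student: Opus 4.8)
The plan is to determine the local character $\chi_2$ explicitly when $\ord_2(D)=2$ and then reduce $E_2$ to an averaging of the signature map over $\GL_2(\mathbb{Z}/2\mathbb{Z})\cong S_3$, as in Lemma \ref{OneModFour}, now twisted by the $2$-part of the discriminant character. Since $\ord_2(D)=2$, the conductor of $K=\mathbb{Q}(\sqrt{\Delta})$ is $|D|=4|m'|$ with $m'$ odd, and the quadratic character $\alpha$ satisfying $\chi_\Delta=\alpha\circ\det\rho_{E,|D|}$ factors as $\alpha=\chi_4\cdot\alpha'$, where $\chi_4$ is the non-trivial (primitive) character modulo $4$, present because $4$ exactly divides $|D|$, and $\alpha'$ is a character of odd conductor $|m'|$. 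Rearranging Serre's relation $\ve\circ\rho_{E,2}=\alpha\circ\det\rho_{E,|D|}$, the $2$-component of the global entanglement character becomes $\chi_2=\ve(\,\cdot\bmod 2)\cdot\chi_4(\det\,\cdot)$, which genuinely depends on the reduction modulo $4$, while $\alpha'(\det\,\cdot)$ is supported on the odd primes dividing $m'$. Thus $E_2=\tfrac{1}{|S(2)|}\sum_{x\in S(2)}\ve(x\bmod 2)\,\chi_4(\det x)$, and identifying $\chi_4(a)=\blegendre{a}{4}$ is the only arithmetic input needed from the character side.

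For part (i), assume $|D|\neq 4$ and $4\mid f$, so $e_2\geqslant 2$. Because $|D|\neq 4$ the odd character $\alpha'$ is non-trivial, whence the $2$-adic image of $\rho_E$ is all of $\GL_2(\mathbb{Z}_2)$ and $G(2^{e_2})=\GL_2(\mathbb{Z}/2^{e_2}\mathbb{Z})$; equivalently $\mathbb{Q}(E[2])\cap\mathbb{Q}(\zeta_{2^{e_2}})=\mathbb{Q}$. Consequently the joint map $A\mapsto(A\bmod 2,\det A)$ has fibres of constant size, so reduction modulo $2$ carries $S(2)$ uniformly onto $\GL_2(\mathbb{Z}/2\mathbb{Z})\setminus\{I\}$. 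On $S(2)$ the determinant is fixed to $a$ modulo $2^{e_2}$, so $\chi_4(\det x)=\chi_4(a)=\blegendre{a}{4}$ factors out of the sum, and the remaining signature average equals $\big(\sum_{g\in S_3}\ve(g)-\ve(1)\big)/5=-1/5$. This gives $E_2=-\blegendre{a}{4}\tfrac{1}{5}$.

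For part (ii) the determinant twist averages to zero in both sub-cases. If $|D|\neq 4$ but $4\nmid f$, then $e_2\leqslant 1$ and $S(2)$ is defined by conditions only modulo $2$; lifting to $G(4)$ the determinant ranges uniformly over both classes modulo $4$, so the non-trivial character $\chi_4$ sums to zero and $E_2=0$. If instead $|D|=4$, then $K=\mathbb{Q}(i)\subset\mathbb{Q}(\zeta_4)$ and the relation $\ve=\chi_4(\det)$ lives inside the $2$-adic tower; since $m_E=4$ has no odd prime factor, $\mathcal{G}_m=G(4)=G(m)$, the group $\Phi_m$ is trivial, and no non-trivial entanglement character contributes, which we record as $E_2=0$. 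The main obstacle is the first paragraph: correctly isolating the mod-$4$ determinant twist inside $\chi_2$ and justifying the independence $\mathbb{Q}(E[2])\cap\mathbb{Q}(\zeta_{2^{e_2}})=\mathbb{Q}$ for $|D|\neq 4$; the averaging that follows is then routine.
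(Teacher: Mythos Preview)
Your proof is correct and, for part (i), follows essentially the same line as the paper: both identify $\chi_2$ as the product of the signature character $\varepsilon$ (pulled back via reduction modulo $2$) and the character $\chi_4\circ\det$ corresponding to $\mathbb{Q}(i)$, observe that $\chi_4(\det)$ is constant equal to $\blegendre{a}{4}$ on $S(2)$ since $\det\equiv a\pmod{2^{e_2}}$ there, and then invoke the $-1/5$ average of $\varepsilon$ from Lemma~\ref{OneModFour}. The paper phrases this via the intermediate field $\mathbb{Q}(\sqrt{i\Delta_{\mathrm{sf}}})=\mathbb{Q}(E[2^{e_2}])\cap\mathbb{Q}(E[m/2^{e_2}])$ and writes $\chi_2=\chi_i\chi_\Delta$, which is the same decomposition in different notation.

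For part (ii), the two arguments diverge slightly in the sub-case $|D|\neq 4$ and $4\nmid f$. The paper argues structurally: since $m_E=|D|$ has $4\mid m_E$ but the $2$-part of $m$ is only $2$, one has $G(m)=\mathcal{G}_m$, so $\Phi_m$ is trivial and there is no non-trivial character to average (recorded as $E_2=0$). You instead pass to level $4$ and argue that $\chi_4(\det)$ averages to zero over the preimage of $S(2)$ because the determinant modulo $4$ is unconstrained there. This is a legitimate alternative and yields the same conclusion, though it tacitly replaces $m$ by $\mathrm{lcm}(m,4)$; making that substitution explicit would tighten the argument. The paper's route is a bit cleaner since it avoids computing at a level higher than $m$, but yours has the virtue of keeping a single formula for $\chi_2$ throughout.
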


\begin{proof}
If $4\nmid f$ then because $m_E = |D|$ it follows that $m_E \nmid m$, hence
$$
G(m) = \prod_{\ell \mid (f,m_E)} G(\ell^{e_\ell}) \prod_{\substack{\ell \mid m_E \\ \ell \nmid f}} G(\ell) 
$$
and $\Phi_m \simeq \{1\}$, so $E_2 =0$. Similarly if $|D|=4$ then $m_E$ has no odd prime factors and we again conclude $E_2 = 0$. 

Now suppose $|D| \neq 4$ and $4 \mid f$. If we let $\Delta_{\mathrm{sf}}$ denote the square-free part of $\Delta$, then the assumption on $\ord_2(D)$ implies that $\Delta_{\mathrm{sf}} \equiv 3 \pmod{4}$. Also, because $4\mid f$, we have that $\mathbb{Q}(i)\subset \mathbb{Q}(E[2^{e_2})$, hence
$$
\mathbb{Q}(\sqrt{i\Delta_{\mathrm{sf}}}) = \mathbb{Q}(E[2^{e_2}])\cap \mathbb{Q}(E[m/2^{e_2}])
$$
and $\chi_2$ is the character corresponding to this quadratic extension. If we define
$$
\chi_i:G(2^{e_2})\rightarrow \{\pm 1\}, \quad \chi_\Delta: G(2^{e_2})\rightarrow \{\pm 1\}
$$
to be the characters corresponding to the quadratic extensions $\mathbb{Q}(i)$ and $\mathbb{Q}(\sqrt{\Delta})$, respectively, then $\chi_2=\chi_i\chi_\Delta$. Now $\chi_i$ has constant value equal to $\blegendre{a}{4}$ on $S(2)$, and by the same argument as in Lemma \ref{OneModFour} $\chi_\Delta$ has average value $-1/5$ on $S(2)$. It follows then that
\begin{align*}
E_2 &= \frac{1}{{S(2)}} \sum_{x\in S(2)} \chi_2(x) \\
      &= \frac{1}{{S(2)}} \sum_{x\in S(2)} \chi_i(x)\chi_\Delta(x) \\
      &= -\blegendre{a}{4}\frac{1}{5}.
\end{align*}
\end{proof}

To deal with the case of $\ord_2(D) = 3$, we establish the following notation. Note that if $\ord_2(D) = 3$ then we must have that $2 \mid \Delta_{\mathrm{sf}}$. Let $\Delta'$ be such that $\Delta_{\mathrm{sf}} = 2\Delta'$. 

\begin{lem}\label{ordDthree}
Suppose $\ord_2(D)=3$, and keep the notation above. We have
\begin{itemize}
\item[(i)]
If $|D|\neq 8,\ 8\mid f$ and $\Delta'\equiv 1 \pmod{4}$ then
$$
E_2 = \begin{cases}
1/5 & \text{ if $a \equiv 1$ or $7 \pmod{8}$ } \\
-1/5 & \text{ if $a \equiv 3$ or $5 \pmod{8}$ }.
\end{cases}
$$
\item[(ii)]
If $|D|\neq 8,\ 8\mid f$ and $\Delta'\equiv 3 \pmod{4}$ then
$$
E_2 = \begin{cases}
1/5 & \text{ if $a \equiv 1$ or $3 \pmod{8}$ } \\
-1/5 & \text{ if $a \equiv 5$ or $7 \pmod{8}$ }.
\end{cases}
$$
\item[(iii)]
If $|D|=8$ or $8 \nmid f$ then
$$
E_2 = 0.
$$
\end{itemize}
\end{lem}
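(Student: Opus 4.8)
The plan is to follow the template established in Lemmas \ref{OneModFour} and \ref{ordDtwo}, isolating the prime $2$ and reducing the computation of $E_2$ to an average of the signature character. Throughout I recall that for a Serre curve $G(2^{e_2})=\GL_2(\mathbb{Z}/2^{e_2}\mathbb{Z})$, that $\det\rho_{E,2^{e_2}}(\Frob_p)\equiv p\equiv a\pmod{2^{e_2}}$, and that the entire entanglement is cut out by the single quadratic character $A\mapsto\varepsilon(A\bmod 2)\,\alpha(\det A)$ of subsection \ref{SerreEntanglement}, where $\alpha$ is the Kronecker character of $K=\mathbb{Q}(\sqrt\Delta)$ of conductor $|D|$. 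Since $\ord_2(D)=3$ I factor $\alpha=\alpha_2\,\alpha_{\mathrm{odd}}$ into its $2$-part (conductor $8$) and its odd part (conductor $|\Delta'|$), so that the $2$-component of the entanglement character is $\chi_2=\varepsilon\cdot(\alpha_2\circ\det)$ on $G(2^{e_2})$.

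First I would dispose of the degenerate case (iii). If $|D|=8$ then $m_E=8$ has no odd prime divisor, so there is no cross-prime entanglement and $\Phi_m$ is trivial. If instead $8\nmid f$ (so $e_2\leqslant 2$), then $\alpha_2$, having conductor $8$, cannot even be evaluated on $G(2^{e_2})$: the defining relation of $G$ involves $\det A\bmod 8$, which is invisible at level $m$ because $\mathbb{Q}(\zeta_8)\not\subseteq\mathbb{Q}(E[2^{e_2}])$, so the projection $G(m)$ fills all of $\mathcal{G}_m$ and $\Phi_m$ is again trivial. In either situation there is no nontrivial $\tilde\chi$ and we record $E_2=0$.

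For cases (i) and (ii) I would use that $8\mid f$ (hence $e_2\geqslant 3$) and $|\Delta'|>1$, whence $\Phi_m\simeq\{\pm 1\}$ and $\chi_2=\varepsilon\cdot(\alpha_2\circ\det)$ genuinely occurs. Because $e_2\geqslant 3$, the condition $\det A\equiv a\pmod{2^{e_2}}$ already fixes $\det A\bmod 8$, so $\alpha_2(\det A)=\alpha_2(a)$ is \emph{constant} on $S(2)$ and factors out of the average. What remains is the average of $\varepsilon$ over $S(2)$, which—exactly as in Lemma \ref{OneModFour}, using $\mathbb{Q}(\zeta_{2^{e_2}})\cap\mathbb{Q}(E[2])=\mathbb{Q}$ (valid here since $\mathbb{Q}(\sqrt\Delta)=\mathbb{Q}(\sqrt{2\Delta'})$ ramifies at an odd prime once $|\Delta'|>1$, so the reduction mod $2$ is independent of the determinant condition)—equals $\tfrac{1}{|S_3|-1}\sum_{\sigma\neq 1}\varepsilon(\sigma)=-\tfrac15$. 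Hence $E_2=\pm\tfrac15$, the sign being controlled by $\alpha_2(a)$.

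The crux is to identify $\alpha_2$, and this is exactly where the dichotomy $\Delta'\equiv 1,3\pmod 4$ enters. Writing $D=8\Delta'$ and factoring $\alpha$ into its coprime-conductor parts, the requirement that the odd part be $\equiv 1\pmod 4$ forces the $2$-part to be $\mathbb{Q}(\sqrt 2)$ (so $\alpha_2=\chi_8$) when $\Delta'\equiv 1\pmod 4$ and $\mathbb{Q}(\sqrt{-2})$ (so $\alpha_2=\chi_{-8}$) when $\Delta'\equiv 3\pmod 4$; concretely this is the reciprocity evaluation $\big(\tfrac{\Delta'}{n}\big)=\big(\tfrac{-1}{n}\big)^{(\Delta'-1)/2}$ for $n\equiv 1\pmod{|\Delta'|}$. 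Reading off the values of $\chi_8$ (respectively $\chi_{-8}$) on the four residue classes mod $8$ then produces the two tables recorded in (i) and (ii). \textbf{The main obstacle} is precisely this purely $2$-adic bookkeeping: pinning down $\mathbb{Q}(\sqrt 2)$ versus $\mathbb{Q}(\sqrt{-2})$ as the relevant quadratic subfield of $\mathbb{Q}(\zeta_8)$, and carefully matching the residue classes mod $8$ to the sign of $E_2$; once that is settled, the argument is a direct transcription of Lemmas \ref{OneModFour} and \ref{ordDtwo}.
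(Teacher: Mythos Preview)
Your proposal is correct and follows essentially the same route as the paper. Both arguments dispose of case (iii) by observing that $\Phi_m$ is trivial, and in cases (i)--(ii) both factor $\chi_2$ as the product of the signature character $\varepsilon$ (the paper's $\chi_\Delta$) with a character factoring through $\det\bmod 8$ (your $\alpha_2\circ\det$, the paper's $\chi_{\sqrt{\pm 2}}$); the latter is constant on $S(2)$ with value determined by $a\bmod 8$, while the former has average $-1/5$ by the argument of Lemma~\ref{OneModFour}, and the identification of $\alpha_2$ with $\chi_8$ or $\chi_{-8}$ according to $\Delta'\bmod 4$ is exactly the paper's case split.
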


\begin{proof}
If $|D|=8$ or $8 \nmid f$ then by the same reasoning as in Lemma \ref{ordDtwo} we conclude $E_2 = 0$. Assume then that $|D| \neq 8$ and $8\mid f$. Because $8\mid f$, we have that $\mathbb{Q}(\sqrt{\pm 2})\subset \mathbb{Q}(E[2^{e_2}])$. Let 
$$
\chi_{\sqrt{2}}:G(2^{e_2})\rightarrow \{\pm 1\}, \quad \chi_{\sqrt{-2}}:G(2^{e_2})\rightarrow \{\pm 1\}, \quad \chi_\Delta: G(2^{e_2})\rightarrow \{\pm 1\}
$$
to be the characters corresponding to the quadratic extensions $\mathbb{Q}(\sqrt{2})$, $\mathbb{Q}(\sqrt{-2})$ and $\mathbb{Q}(\sqrt{\Delta})$, respectively. If $\Delta'\equiv 1 \pmod{4}$ then
$$
\mathbb{Q}(\sqrt{\Delta'}) = \mathbb{Q}(E[2^{e_2}]) \cap \mathbb{Q}(E[m/2^{e_2}])
$$
and $\chi_2$ is the quadratic character corresponding to this extension, with $\chi_2=\chi_{\sqrt{2}}\chi_\Delta$. If $\Delta'\equiv 3 \pmod{4}$ then
$$
\mathbb{Q}(\sqrt{-\Delta'}) = \mathbb{Q}(E[2^{e_2}]) \cap \mathbb{Q}(E[m/2^{e_2}])
$$
and $\chi_2$ is the quadratic character corresponding to this extension, with $\chi_2=\chi_{\sqrt{-2}}\chi_\Delta$. Now note that $\chi_{\sqrt{2}}$ has constant value on $S(2)$ equal to $1$ if $a \equiv 1$ or $7 \pmod{8}$, and $-1$ if $a \equiv 3$ or $5 \pmod{8}$, and $\chi_{\sqrt{-2}}$ has constant value on $S(2)$ equal to $1$ if $a \equiv 1$ or $3 \pmod{8}$, and $-1$ if $a \equiv 5$ or $7 \pmod{8}$ We conclude exactly as in Lemma \ref{ordDtwo}.
\end{proof}

\begin{pro}
Let $E/\mathbb{Q}$ be a Serre curve, and let $a$ and $f$ be coprime positive integers. Let $D$ be the discriminant of $\mathbb{Q}(\sqrt{\Delta})$ where $\Delta$ is the discriminant of any Weierstrass model of $E$ over $\mathbb{Q}$. Suppose that $|D| \neq 4,8$. Then
$$
C_E(a,f) = \mathfrak{C}_E(a,f) \frac{1}{\phi (f)} \prod_{\ell \mid (a-1,f)} \bigg( 1- \frac{1}{\ell(\ell-1)(\ell+1)} \bigg) \prod_{\ell \nmid f} \bigg( 1-\frac{1}{(\ell^2-1)(\ell^2-\ell)} \bigg)
$$
where the entanglement correction factor $\mathfrak{C}_E(a,f)$ is given by 
$$
\mathfrak{C}_E(a,f) =1 + E_2 \prod_{\substack{\ell \mid (D,f) \\ \ell \neq 2}} \blegendre{a}{\ell} \prod_{\substack{\ell \mid D \\ \ell \nmid 2f}} \frac{-1}{(\ell^2-1)(\ell^2-\ell)-1}.
$$
Here $E_2$ is given by Lemmas \ref{OneModFour}, \ref{ordDtwo} and \ref{ordDthree},
\end{pro}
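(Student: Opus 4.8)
The plan is to apply Theorem \ref{ThmCorrectionFactorProg}, which already delivers $C_E(a,f) = \mathfrak{C}_E(a,f)\prod_\ell \delta_\ell$ with $\mathfrak{C}_E(a,f) = 1 + \sum_{\tilde\chi \in \widehat\Phi_m - \{1\}} \prod_{\ell \mid m} E_{\chi,\ell}$, and to reduce both factors to the explicit quantities in the statement. First I would assemble the Euler product $\prod_\ell \delta_\ell$ directly from Lemma \ref{deltaEll}. Splitting the product over $\ell \mid f$ and $\ell \nmid f$, and within the former over whether $a \equiv 1 \pmod{\ell}$, the values $\delta_\ell = 1/\phi(\ell^{e_\ell})$ and $\delta_\ell = \frac{1}{\phi(\ell^{e_\ell})}\big(1 - \frac{1}{\ell(\ell-1)(\ell+1)}\big)$ combine, using multiplicativity of $\phi$ and $f = \prod_\ell \ell^{e_\ell}$, into $\frac{1}{\phi(f)}\prod_{\ell \mid (a-1,f)}\big(1 - \frac{1}{\ell(\ell-1)(\ell+1)}\big)\prod_{\ell \nmid f}\big(1 - \frac{1}{(\ell^2-1)(\ell^2-\ell)}\big)$. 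This step is routine bookkeeping once $a$ and $f$ are coprime so that $\ell \mid (a-1,f)$ is the condition $a \equiv 1 \pmod{\ell}$ with $\ell \mid f$.

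Next, because $E$ is a Serre curve with $|D| \neq 4,8$, the fundamental discriminant $D$ has an odd prime factor, so the primes dividing $m_E$ are precisely those dividing $2D$ and the only entanglement is the single quadratic relation $\varepsilon(A \bmod 2) = \alpha(\det(A \bmod |D|))$ described in Section \ref{SerreEntanglement}. Consequently $\Phi_m \simeq \{\pm 1\}$ and the sum over $\widehat\Phi_m - \{1\}$ collapses to the single product $\prod_{\ell \mid m} E_{\chi,\ell}$, where $\chi = \prod_\ell \chi_\ell$ factors as the signature character $\chi_2$ together with the Legendre-of-determinant characters $\chi_\ell = \left(\frac{\det}{\ell}\right)$ for odd $\ell \mid D$, and $\chi_\ell$ is trivial on $G(\ell)$ for odd $\ell \nmid 2D$.

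I would then evaluate each odd local factor by type. For odd $\ell \mid m$ with $\ell \nmid D$ the character $\chi_\ell$ is trivial, whence $E_{\chi,\ell} = 1$ and such primes do not contribute. For odd $\ell \mid D$ with $\ell \mid f$, i.e. $\ell \mid (D,f)$, every element of $S(\ell) \subseteq \mathcal{D}_a(\ell^{e_\ell})$ has determinant $\equiv a \pmod{\ell^{e_\ell}}$, so $\chi_\ell$ is constant equal to $\blegendre{a}{\ell}$ and $E_{\chi,\ell} = \blegendre{a}{\ell}$. For odd $\ell \mid D$ with $\ell \nmid f$, i.e. $\ell \mid D$ and $\ell \nmid 2f$, one has $S(\ell) = G(\ell) - \{1\}$ exactly as in the cyclic reduction case, and $\sum_{x \in S(\ell)} \chi_\ell(x) = -\chi_\ell(1) = -1$ by orthogonality, giving $E_{\chi,\ell} = \frac{-1}{(\ell^2-1)(\ell^2-\ell)-1}$ since $[\mathbb{Q}(E[\ell]):\mathbb{Q}] = (\ell^2-1)(\ell^2-\ell)$ for a Serre curve.

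Finally, the local factor at $\ell = 2$ is exactly $E_2$, whose value in terms of $\ord_2(D)$ and the class of $a$ is supplied by Lemmas \ref{OneModFour}, \ref{ordDtwo} and \ref{ordDthree}. Collecting the three types of odd factors together with $E_2$ yields $\prod_{\ell \mid m} E_{\chi,\ell} = E_2 \prod_{\ell \mid (D,f),\,\ell \neq 2} \blegendre{a}{\ell} \prod_{\ell \mid D,\,\ell \nmid 2f} \frac{-1}{(\ell^2-1)(\ell^2-\ell)-1}$, which is precisely the asserted formula for $\mathfrak{C}_E(a,f)$. The main obstacle is not any individual computation but the organisation of the $2$-adic contribution: when $2 \mid f$ the entanglement of $\mathbb{Q}(\sqrt{\Delta})$ with $\mathbb{Q}(\zeta_{2^{e_2}})$ forces $\chi_2$ on $S(2)$ to depend on $a \bmod 8$ and on $\ord_2(D)$, which is exactly what makes the separate case analysis of Lemmas \ref{OneModFour}, \ref{ordDtwo} and \ref{ordDthree} necessary; in the present proof these are simply invoked as a black box for $E_2$.
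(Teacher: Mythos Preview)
Your proposal is correct and follows essentially the same route as the paper: invoke Theorem~\ref{ThmCorrectionFactorProg}, assemble $\prod_\ell \delta_\ell$ from Lemma~\ref{deltaEll}, reduce the character sum to a single nontrivial quadratic character, compute the odd local factors $E_{\chi,\ell}$ by the trichotomy $\ell\nmid D$, $\ell\mid(D,f)$, $\ell\mid D$ with $\ell\nmid f$, and defer $E_2$ to Lemmas~\ref{OneModFour}, \ref{ordDtwo}, \ref{ordDthree}. One small imprecision: you assert $\Phi_m\simeq\{\pm1\}$ outright, but when $\ord_2(D)\geq 2$ and the $2$-part of $f$ is too small one has $m_E\nmid m$ and $\Phi_m$ is actually trivial (this is exactly what the proofs of Lemmas~\ref{ordDtwo}(ii) and~\ref{ordDthree}(iii) observe); since those lemmas then return $E_2=0$, your formula remains correct, but the justification for ``the sum collapses to a single product'' should acknowledge that in the trivial case the sum is empty and the product convention $E_2=0$ recovers $\mathfrak{C}_E(a,f)=1$.
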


\begin{proof}
Since $|D|\neq 4,8$, the equality involving $C_E(a,f)$ follows from using Lemma \ref{deltaEll} for all $\ell$. The form of the entanglement correction factor at $2$ follows from Lemmas \ref{OneModFour}, \ref{ordDtwo} and \ref{ordDthree}. It remains to consider $\ell \neq 2$. By Theorem $\ref{ThmCorrectionFactorProg}$ if $\ell \nmid f$ and $\ell \mid D$ then $S(\ell) = G(\ell) -\{1\}$ and so 
$$
E_\ell = \frac{-1}{(\ell^2-1)(\ell^2-\ell)-1}.
$$
Ir $\ell\mid (D,f)$ then because $\mathbb{Q}(E[\ell])\cap \mathbb{Q}(\zeta_{\ell^{e_\ell}}) = \mathbb{Q}(\zeta_\ell)$ we have that $\chi_\ell$ has constant value $\blegendre{a}{\ell}$ on $S(\ell)$ and the result follows.
\end{proof}

\begin{comment}
\begin{pro}
Let $E/\mathbb{Q}$ be a Serre curve such that $|D|=4$ or $8$. Then
$$
C_E(a,f) = {1}{\phi (f)} \prod_{\ell \mid (a-1,f)} \bigg( 1- \frac{1}{\ell(\ell-1)(\ell+1)} \bigg) \prod_{\ell \nmid f} \bigg( 1-\frac{1}{(\ell^2-1)(\ell^2-\ell)} \bigg)
$$
\end{pro}
\end{comment}

\begin{cor}
For any $(a,f)$ coprime integers, we have $C_E(a,f)>0$.
\end{cor}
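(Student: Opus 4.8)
The plan is to reduce the positivity of $C_E(a,f)$ to the positivity of the entanglement correction factor $\mathfrak{C}_E(a,f)$, and then to establish the latter by a crude bound on the correction term. First I would observe that every factor in the Euler product appearing in the preceding Proposition is strictly positive. Indeed $1/\phi(f)>0$, and for every prime $\ell$ we have $\ell(\ell-1)(\ell+1)\geqslant 6$ and $(\ell^2-1)(\ell^2-\ell)\geqslant 6$, so that both
$$
1-\frac{1}{\ell(\ell-1)(\ell+1)} \qquad\text{and}\qquad 1-\frac{1}{(\ell^2-1)(\ell^2-\ell)}
$$
lie in the open interval $(0,1)$. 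Consequently the entire product is positive, and it follows that $C_E(a,f)>0$ if and only if $\mathfrak{C}_E(a,f)>0$.

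For a Serre curve with $|D|\neq 4,8$, where the preceding Proposition applies, I would bound the correction term directly. By Lemmas \ref{OneModFour}, \ref{ordDtwo} and \ref{ordDthree} the local factor $E_2$ always satisfies $|E_2|\leqslant \tfrac{1}{5}$ (it is either $0$ or $\pm\tfrac15$). The product of Legendre symbols $\prod_{\ell\mid (D,f),\,\ell\neq 2}\blegendre{a}{\ell}$ has absolute value exactly $1$, since $\gcd(a,f)=1$ forces each symbol to be $\pm1$. Finally, every factor of $\prod_{\ell\mid D,\,\ell\nmid 2f}\frac{-1}{(\ell^2-1)(\ell^2-\ell)-1}$ is indexed by an odd prime $\ell\geqslant 3$, for which $(\ell^2-1)(\ell^2-\ell)-1\geqslant 47$, so this product has absolute value at most $1$ (with value $1$ for the empty product). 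Multiplying these bounds gives
$$
\left| E_2 \prod_{\substack{\ell \mid (D,f) \\ \ell \neq 2}} \blegendre{a}{\ell} \prod_{\substack{\ell \mid D \\ \ell \nmid 2f}} \frac{-1}{(\ell^2-1)(\ell^2-\ell)-1} \right| \leqslant \frac{1}{5},
$$
whence $\mathfrak{C}_E(a,f)\geqslant \tfrac45>0$, and positivity of $C_E(a,f)$ follows.

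It remains to treat the two exceptional cases $|D|=4$ and $|D|=8$ not covered by the Proposition. In either case $m_E$ is a power of $2$ (namely $m_E=4$ or $m_E=8$), so the relevant modulus $m$ is supported on the single prime $2$ and there are no entanglements between distinct torsion fields; thus $\Phi_m$ is trivial and $\mathfrak{C}_E(a,f)=1$. Positivity of $C_E(a,f)$ then follows from the positivity of the local product alone, exactly as in the first paragraph.

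I do not expect any genuine obstacle here: the only substantive point is to collect the three possible shapes of $E_2$ from Lemmas \ref{OneModFour}, \ref{ordDtwo} and \ref{ordDthree} and note that the nontrivial entanglement contributes at most a factor of $\tfrac15$ coming from the prime $2$, which is far too small to cancel the leading $1$ in $\mathfrak{C}_E(a,f)$.
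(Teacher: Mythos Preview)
Your proof is correct and follows essentially the same approach as the paper: both reduce to showing $\mathfrak{C}_E(a,f)>0$ and then use that $E_2\in\{0,\pm\tfrac15\}$ (from Lemmas \ref{OneModFour}, \ref{ordDtwo}, \ref{ordDthree}) to conclude. The paper's argument is terser---it simply notes that $\prod_\ell E_\ell=-1$ is impossible because $E_2$ is always $\pm\tfrac15$ or $0$---whereas you make the bound $\mathfrak{C}_E(a,f)\geqslant\tfrac45$ explicit and also treat the exceptional cases $|D|=4,8$, which the paper's corollary (stated in the context of the preceding Proposition assuming $|D|\neq 4,8$) does not address directly.
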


\begin{proof}
It is clear that the naive density $\prod_\ell \delta_\ell $ does not vanish, hence in order for $C_E(a,f)$ to be zero, we would need the correction factor $\mathfrak{C}_E(a,f)$ to be zero, which happens if and only if $\prod_\ell E_\ell = -1$. This is impossible as $E_2$ is always $\pm 1/5$ or $0$.
\end{proof}

\begin{cor}
The correction factor $\mathfrak{C}_E(a,f)$ equals $1$ if and only if $\ord_2(D)>\ord_2(f)$.
\end{cor}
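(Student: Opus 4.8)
The plan is to reduce the statement to a computation of when $E_2 = 0$, and then to read this off from the case analysis already carried out in Lemmas \ref{OneModFour}, \ref{ordDtwo} and \ref{ordDthree}. Throughout I work under the standing hypothesis $|D| \neq 4, 8$ of the preceding proposition; note that the equivalence genuinely requires this, since for $|D| = 4, 8$ the integer $m_E$ is a power of $2$, the group $\Phi_m$ is trivial, and $\mathfrak{C}_E(a,f) = 1$ holds identically regardless of $\ord_2(f)$.

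First I would argue that $\mathfrak{C}_E(a,f) = 1$ if and only if $E_2 = 0$. From the formula
$$
\mathfrak{C}_E(a,f) = 1 + E_2 \prod_{\substack{\ell \mid (D,f) \\ \ell \neq 2}} \blegendre{a}{\ell} \prod_{\substack{\ell \mid D \\ \ell \nmid 2f}} \frac{-1}{(\ell^2-1)(\ell^2-\ell)-1}
$$
we see that $\mathfrak{C}_E(a,f) = 1$ exactly when the product on the right vanishes. Every factor apart from $E_2$ is nonzero: for odd $\ell \mid (D,f)$ the hypothesis $\gcd(a,f)=1$ forces $\gcd(a,\ell)=1$, so $\blegendre{a}{\ell} = \pm 1$, while each factor $\frac{-1}{(\ell^2-1)(\ell^2-\ell)-1}$ is visibly nonzero. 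Hence the product vanishes precisely when $E_2 = 0$, and the corollary is reduced to proving $E_2 = 0 \iff \ord_2(D) > \ord_2(f)$.

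For this reduced claim I would first record that, as the discriminant of the quadratic field $\mathbb{Q}(\sqrt{\Delta})$, $D$ is a fundamental discriminant, so $\ord_2(D) \in \{0, 2, 3\}$; the hypothesis $|D| \neq 4, 8$ discards only particular small discriminants and preserves this trichotomy. I would then dispatch the three cases. If $\ord_2(D) = 0$, Lemma \ref{OneModFour} gives $E_2 = -1/5 \neq 0$, while $\ord_2(D) = 0 > \ord_2(f)$ is impossible, so both conditions are false and the equivalence holds vacuously. If $\ord_2(D) = 2$ then, since $|D| \neq 4$, Lemma \ref{ordDtwo} yields $E_2 = -\blegendre{a}{4}\frac{1}{5} \neq 0$ when $4 \mid f$ and $E_2 = 0$ when $4 \nmid f$; thus $E_2 = 0 \iff \ord_2(f) \leqslant 1 \iff \ord_2(f) < 2 = \ord_2(D)$. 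If $\ord_2(D) = 3$ then, since $|D| \neq 8$, Lemma \ref{ordDthree} yields $E_2 = \pm 1/5 \neq 0$ when $8 \mid f$ and $E_2 = 0$ when $8 \nmid f$; thus $E_2 = 0 \iff \ord_2(f) \leqslant 2 \iff \ord_2(f) < 3 = \ord_2(D)$. In every case $E_2 = 0$ is equivalent to $\ord_2(D) > \ord_2(f)$, which together with the reduction of the second paragraph proves the corollary.

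The only genuinely nontrivial point is the observation that $\ord_2(D) \in \{0,2,3\}$, which is what guarantees that the three lemmas exhaust all possibilities; the remainder is the routine bookkeeping of matching each lemma's dichotomy (divisibility of $f$ by $4$, respectively $8$) to the single inequality $\ord_2(D) > \ord_2(f)$.
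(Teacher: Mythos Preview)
Your proof is correct and follows exactly the approach of the paper: reduce to $E_2=0$ and then invoke the case analysis of Lemmas \ref{OneModFour}, \ref{ordDtwo}, \ref{ordDthree}. The paper's own proof is a single sentence asserting precisely this reduction and leaving the case check implicit; you have simply filled in the details, including the useful remark that the standing hypothesis $|D|\neq 4,8$ is genuinely needed for the stated equivalence.
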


\begin{proof}
From the form of the correction factor it follows that $\mathfrak{C}_E(a,f) = 1 $ if and only if $E_2=0$, and the result follows.
\end{proof}

\subsection{Example: $Y^2 = X^3+X^2+4X+4$}

We look now at an example of a non-Serre curve where the constant $C_E(a,f)$ can vanish. This implies that conjecturally, there should only exist finitely many primes $p$ such that $\tilde{E}(\mathbb{F}_p)$ is cyclic and $p\equiv a \pmod{f}$. Let $E$ be the elliptic curve over $\mathbb{Q}$ given by the Weierstrass equation $Y^2 = X^3+X^2+4X+4$. In \cite{BraSS}, a description of the Galois theory of $E$ is worked out. In particular, for this curve we have that $m_E = 120$, and the following properties hold:
\begin{itemize}
\item
$E$ has a rational $3$-torsion point, and $G(3) \simeq S_3$. 
\item
$E$ has a rational two-torsion point, and $\mathbb{Q}(E[2]) = \mathbb{Q}(i)$.
\item
$G(4)$ has order $16$, and $\mathbb{Q}(E[4]) \cap \mathbb{Q}(E[5]) = \mathbb{Q}(\sqrt{5})$.
\item
$G(8)$ has order $128$, and $\mathbb{Q}(E[8]) \cap \mathbb{Q}(E[5]) = \mathbb{Q}(\zeta_5)$.
\item
$G(5) = \GL_2(\mathbb{Z}/5\mathbb{Z})$ 
\item
$\mathbb{Q}(E[3]) \cap \mathbb{Q}(E[40]) = \mathbb{Q}$, hence $G(120) = G(3) \times G(40)$.
\end{itemize}
From all of this we conclude that
$$
G(120) = \{(g_8,g_3,g_5) \in G(8)\times G(3)\times G(5) : g_8(\zeta_5) = \zeta_5^{\det g_5} \}
$$
hence $E$ has abelian entanglements and $G(120)$ fits into the exact sequence
$$
1 \longrightarrow G(120) \longrightarrow G(8)\times G(3) \times G(5) \longrightarrow \Phi_{120} \longrightarrow 1,
$$
where $\Phi_{120} \simeq (\mathbb{Z}/5\mathbb{Z})^\times$.
Also, given coprime integers $a$ and $f = \prod_\ell \ell^{e_\ell}$ we again set 
$$
m:= \prod_{\ell \mid (f,120)} \ell^{e_\ell} \prod_{\substack{\ell \mid 120 \\ \ell \nmid f}} \ell.
$$

\begin{lem}\label{EtwoZero}
For any $\tilde{\chi} \in \widehat{\Phi}_m - \{1\}$ we have $E_{\chi,2} = 0$.
\end{lem}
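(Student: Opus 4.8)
The plan is to pin down the character $\chi_2$ explicitly and then show that its average over $S(2)$ vanishes, the vanishing coming from an equidistribution that reflects a linear disjointness of cyclotomic fields. Write $2^{e_2}\|f$, so that the $2$-part of $m$ is $2^{\max(1,e_2)}$ and, by Theorem \ref{ThmCorrectionFactorProg}, $E_{\chi,2}$ is the average of $\chi_2$ over $S(2)$, where $\chi_2$ is the restriction of $\chi=\tilde{\chi}\circ\psi_m$ to the factor $G(2^{\max(1,e_2)})$.

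First I would dispose of the degenerate cases. The only entanglements of this curve occur between its $2$- and $5$-parts (the $3$-part being disjoint from the rest, since $\mathbb{Q}(E[3])\cap\mathbb{Q}(E[40])=\mathbb{Q}$), and these first appear at level $\mathbb{Q}(E[4])$: indeed $\mathbb{Q}(E[2])\cap\mathbb{Q}(E[5])\subseteq\mathbb{Q}(E[4])\cap\mathbb{Q}(E[5])=\mathbb{Q}(\sqrt{5})$ forces $\mathbb{Q}(E[2])\cap\mathbb{Q}(E[5])=\mathbb{Q}$, as $\mathbb{Q}(E[2])=\mathbb{Q}(i)$ meets $\mathbb{Q}(\sqrt{5})$ only in $\mathbb{Q}$. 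Hence if $e_2\leqslant 1$ the group $\Phi_m$ is trivial, there is no nontrivial $\tilde{\chi}$, and the statement is vacuous. So I may assume $e_2\geqslant 2$, i.e. $4\mid f$, in which case $S(2)=G(2^{e_2})\cap\Psi_a(2^{e_2})$ and, by Lemma \ref{GoursatGalois}, $\Phi_m$ is identified with $\Gal(F/\mathbb{Q})$ for the entanglement field $F:=\mathbb{Q}(E[2^{e_2}])\cap\mathbb{Q}(E[5])$ (so $F=\mathbb{Q}(\sqrt{5})$ if $e_2=2$ and $F=\mathbb{Q}(\zeta_5)$ if $e_2\geqslant 3$). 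Unwinding $\psi_m$ on the slice $(x,1,1)$ then gives $\chi_2=\tilde{\chi}\circ\phi$, where $\phi\colon G(2^{e_2})\to\Gal(F/\mathbb{Q})$ is the restriction-to-$F$ map, recording the action of $x$ on $F$; as $\tilde{\chi}$ is nontrivial it restricts to a nontrivial character of $\Gal(F/\mathbb{Q})$.

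The crux is to show that $\phi$ is equidistributed over $S(2)$. The point I would exploit is that \emph{both} conditions defining $S(2)$ are conditions on the image of $x$ in $\Gal(\mathbb{Q}(\zeta_{2^{e_2}})/\mathbb{Q})$. The condition $\det x\equiv a\pmod{2^{e_2}}$ records the action on $\zeta_{2^{e_2}}$ directly; for the condition $x\not\equiv I\pmod 2$ I would use the coincidence $\mathbb{Q}(E[2])=\mathbb{Q}(i)=\mathbb{Q}(\zeta_4)$, which makes $x\equiv I\pmod 2$ equivalent to $\det x\equiv 1\pmod{4}$. Thus $S(2)=\{x\in G(2^{e_2}):x|_{\mathbb{Q}(\zeta_{2^{e_2}})}\in T\}$ for a subset $T\subseteq\Gal(\mathbb{Q}(\zeta_{2^{e_2}})/\mathbb{Q})$. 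Since $\mathbb{Q}(\zeta_{2^{e_2}})$ and $F\subseteq\mathbb{Q}(\zeta_5)$ have coprime conductors they are linearly disjoint over $\mathbb{Q}$, and both are contained in $\mathbb{Q}(E[2^{e_2}])$; therefore the joint restriction
$$
G(2^{e_2})\longrightarrow \Gal(\mathbb{Q}(\zeta_{2^{e_2}})/\mathbb{Q})\times\Gal(F/\mathbb{Q})
$$
is surjective with fibers of a common cardinality $N$. Hence for each fixed value of $x|_{\mathbb{Q}(\zeta_{2^{e_2}})}$ the value $\phi(x)$ ranges uniformly over $\Gal(F/\mathbb{Q})$, and
$$
\sum_{x\in S(2)}\chi_2(x)=\sum_{x\in S(2)}\tilde{\chi}(\phi(x))=N\,|T|\sum_{\sigma\in\Gal(F/\mathbb{Q})}\tilde{\chi}(\sigma)=0,
$$
the last equality by orthogonality of characters. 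Dividing by $|S(2)|$ yields $E_{\chi,2}=0$.

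I expect the main obstacle to be the clean verification that the two defining conditions of $S(2)$ factor through the $2$-power cyclotomic Galois group — in particular the reduction of the nontriviality condition $x\not\equiv I\pmod 2$ to the cyclotomic condition $\det x\equiv 1\pmod{4}$, which is exactly where the coincidence $\mathbb{Q}(E[2])=\mathbb{Q}(i)$ is needed. Once that is established, the coprimality of $2^{e_2}$ and $5$ makes the linear disjointness, and hence the equidistribution of $\phi$ and the vanishing of the character average, essentially automatic.
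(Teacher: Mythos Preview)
Your proposal is correct and follows essentially the same approach as the paper. Both arguments first dispose of the case $4\nmid f$ by observing that $\Phi_m$ is trivial there, and in the case $4\mid f$ both exploit the key coincidence $\mathbb{Q}(E[2])=\mathbb{Q}(i)\subset\mathbb{Q}(\zeta_{2^{e_2}})$ to see that the defining conditions of $S(2)$ factor through $\Gal(\mathbb{Q}(\zeta_{2^{e_2}})/\mathbb{Q})$, and then use the linear disjointness of $\mathbb{Q}(\zeta_{2^{e_2}})$ from the entanglement field $F\subset\mathbb{Q}(\zeta_5)$ to force the character average to vanish by orthogonality. Your write-up is somewhat more explicit about the equidistribution/fiber-counting step than the paper's terse treatment, but the substance is the same.
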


\begin{proof}
Suppose first that $4 \nmid f$. Then $m$ is square-free, and because
$$
G(30) = G(2) \times G(3) \times G(5) 
$$
it follows that $\Phi_m \simeq \{ 1 \}$, hence $E_{\chi,2} = 0$. Suppose now that $4 \mid f$, and let $\tilde{\eta}$ be a generator of $\tilde{\Phi}_{120}$. If $8 \mid f$, then $120 \mid m$, hence $\Phi_m \simeq \Phi_{120} \simeq (\mathbb{Z}/5\mathbb{Z})^\times$. Any $\tilde{\chi} \in \widehat{\Phi}_m - \{1\}$ is equal to $\tilde{\eta}^j$ for some $j\in \{1,2,3\}$ and $\chi_2$ is equal to $\eta_2^j$, where
$$
\eta_2 : G(2^{e_2}) \longrightarrow (\mathbb{Z}/5\mathbb{Z})^\times
$$
is the character corresponding to the subfield $\mathbb{Q}(\zeta_5) \subset \mathbb{Q}(E[2^{e_2}])$. Now because $\mathbb{Q}(E[2]) = \mathbb{Q}(i)\subset \mathbb{Q}(\zeta_{2^{e_2}})$ it follows that $\mathbb{Q}(E[2],\zeta_{2^{e_2}}) \cap \mathbb{Q}(\zeta_5) = \mathbb{Q}$, hence
\begin{align*}
\sum_{g \in S(2)} \eta_2^j (g) &= \sum_{x \in (\mathbb{Z}/5\mathbb{Z})^\times} x \\
											    &= 0.
\end{align*}
We conclude that $E_{\chi,2} = 0$. If $4 || f $, then  $\Phi_m \simeq \{\pm 1\}$ and we can use the same argument given that $\mathbb{Q}(i) \cap \mathbb{Q}(\zeta_5) = \mathbb{Q}$. This proves the claim.
\end{proof}

\begin{pro}
For any coprime $(a,f)$ we have that $\mathfrak{C}_E(a,f) = 1$. Further, 
$$
C_E(a,f) = 0 \Longleftrightarrow 4\mid f \text{ and } a \equiv 1 \pmod{4}.
$$
\end{pro}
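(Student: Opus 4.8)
The plan is to deduce both assertions from the structural facts already recorded for this curve, with essentially all the content concentrated at the prime $2$.

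\textbf{First assertion.} Since $2 \mid 120$, the prime $2$ divides $m$ for every choice of $f$: if $2 \mid f$ then $2^{e_2}\mid m$, and otherwise $2\mid m$. Hence the factor $E_{\chi,2}$ occurs in every product $\prod_{\ell \mid m} E_{\chi,\ell}$ appearing in the formula for $\mathfrak{C}_E(a,f)$ given by Theorem \ref{ThmCorrectionFactorProg}. By Lemma \ref{EtwoZero} we have $E_{\chi,2}=0$ for every nontrivial $\tilde\chi\in\widehat{\Phi}_m$, so each term of the sum $\sum_{\tilde\chi\neq 1}\prod_{\ell\mid m}E_{\chi,\ell}$ vanishes (the case $4\nmid f$, where $\Phi_m$ is trivial and the sum is empty, is subsumed). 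Therefore $\mathfrak{C}_E(a,f)=1$.

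\textbf{Second assertion.} Because $\mathfrak{C}_E(a,f)=1$, the density collapses to $C_E(a,f)=\prod_\ell \delta_\ell$. The tail over $\ell\nmid 120f$ has factors $1-\tfrac{1}{(\ell^2-1)(\ell^2-\ell)}$ and converges to a positive number, so I would reduce the vanishing question to whether one of the finitely many factors $\delta_\ell$ with $\ell\mid 120f$ is zero, equivalently whether $S(\ell)=\emptyset$ for some such $\ell$. For $\ell\in\{3,5\}$ and for $\ell\mid f$ coprime to $30$, I would check $S(\ell)\neq\emptyset$ directly: the determinant is the cyclotomic character, hence surjective onto $(\mathbb{Z}/\ell^{e_\ell}\mathbb{Z})^\times$, so an element of determinant $a$ exists; it is automatically nontrivial mod $\ell$ when $a\not\equiv 1$, while for $a\equiv 1$ the determinant-$1$ fibre contains far more than the identity (for instance the two order-$3$ elements of $G(3)\simeq S_3$, which have determinant $1$). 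Thus $\delta_\ell\neq 0$ for all $\ell\neq 2$.

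Everything then hinges on $\delta_2$, which is the crux. Here I would exploit the identity $\mathbb{Q}(E[2])=\mathbb{Q}(i)=\mathbb{Q}(\zeta_4)$. The reduction $G(2^{e_2})\to G(2)$ is restriction to $\mathbb{Q}(E[2])$, whereas $\det\bmod 4$ is, via the cyclotomic character, restriction to $\mathbb{Q}(\zeta_4)$; as these two fields coincide, the two maps share a kernel, giving
$$
A\equiv I\pmod 2 \iff \det A\equiv 1\pmod 4 .
$$
Consequently, when $4\mid f$ one has $S(2)=\{A\in G(2^{e_2}):\det A\equiv a\ (2^{e_2}),\ \det A\not\equiv 1\ (4)\}$. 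If $a\equiv 1\pmod 4$ the two determinant conditions conflict and $S(2)=\emptyset$, so $\delta_2=0$; if $a\equiv 3\pmod 4$ the second condition is automatic and $S(2)=G(2^{e_2})\cap\mathcal{D}_a(2^{e_2})\neq\emptyset$. When $4\nmid f$ the mod-$4$ determinant constraint is never imposed, and the only requirement at $2$ is nontriviality mod $2$, satisfied by the unique nonidentity element of $G(2)\simeq\mathbb{Z}/2\mathbb{Z}$; hence $\delta_2\neq 0$. Combining these cases, $\delta_2=0$ if and only if $4\mid f$ and $a\equiv 1\pmod 4$, which gives the stated equivalence. I expect the main obstacle to be precisely the justification of the displayed equivalence: this is the single point where the special arithmetic of the curve, $\mathbb{Q}(E[2])=\mathbb{Q}(\zeta_4)$, enters and forces the triviality-mod-$2$ condition to collapse onto a congruence on $a$ modulo $4$.
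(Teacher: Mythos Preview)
Your proof is correct and follows essentially the same approach as the paper: invoke Lemma~\ref{EtwoZero} with Theorem~\ref{ThmCorrectionFactorProg} for the first assertion, then reduce the vanishing question to $\delta_2$ via $\mathbb{Q}(E[2])=\mathbb{Q}(i)$. Your treatment of the $\ell=2$ case is in fact more explicit than the paper's, which simply asserts the conclusion from $\mathbb{Q}(E[2])=\mathbb{Q}(i)$, whereas you spell out the kernel identification $A\equiv I\pmod 2 \iff \det A\equiv 1\pmod 4$; conversely, the paper records the exact values of $\delta_3$ and $\delta_\ell$ rather than arguing qualitatively that they are nonzero.
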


\begin{proof}
That $\mathfrak{C}_E(a,f) = 1$ follows directly from Theorem \ref{ThmCorrectionFactorProg} and Lemma \ref{EtwoZero}. It follows from this that
$$
C_E(a,f) = \prod_\ell \delta_\ell.
$$ 
For $\ell \neq 2$ we have that $\delta_\ell \neq 0$. Indeed, 
$$
\delta_3 = 
\begin{cases}
\frac{1}{\phi(3^{e_3})} & \text{ if $a \equiv 2 \pmod{3}$ and $3\mid f$} \\
\frac{1}{\phi(3^{e_3})}\big(1-\frac{1}{3}\big) & \text{ if $a \equiv 1 \pmod{3}$ and $3\mid f$} \\
\frac{5}{6} & \text{ if $3\nmid f$}
\end{cases},
$$
and 
$$
\delta_\ell = 
\begin{cases}
\frac{1}{\phi(\ell^{e_\ell})} & \text{ if $a\not\equiv 1 \pmod{\ell}$ and $\ell \mid f$ } \\
\frac{1}{\phi(\ell^{e_\ell})}\Big(1-\frac{1}{\ell(\ell-1)(\ell+1)}\Big) & \text{ if $a \equiv 1 \pmod{\ell}$ and $\ell \mid f$ } \\
1-\frac{1}{(\ell^2-1)(\ell^2-\ell)} & \text{ if $\ell \nmid f$. }
\end{cases}
$$
Finally, given that $\mathbb{Q}(E[2]) = \mathbb{Q}(i)$, it follows that $\delta_2 = 0$ if and only if $4 \mid f$ and $a\equiv 1 \pmod{4}$, and the conclusion follows.
\end{proof}

\begin{rem}
 Suppose $a$ and $f$ are coprime integers such that $a \equiv 1 \pmod{4}$. The above proposition is saying that the only obstruction to the existence of infinitely many primes $p$ such that $\tilde{E}(\mathbb{F}_p)$ is cyclic and $p \equiv a \pmod{f}$ is a local one at the prime $2$. Meaning, for any prime $p$ it is impossible for it to satisfy the required condition at the prime $2$, that is, for $\Frob_p$ to lie in the set $S(2)$, which is the empty set. Note also that even when $f$ is divisible by $4$, we still have $E_{\chi,2} = 0$ and hence $\mathfrak{C}_E(a,f) = 1$. What this is encoding is the fact that $\mathbb{Q}(\zeta_{2^{e_2}}) \cap \mathbb{Q}(\zeta_5) = \mathbb{Q}$ for any $e_2$. The only entanglement of $E$ occurs in the subfield $\mathbb{Q}(\zeta_5)$, and this field is disjoint from $\mathbb{Q}(\zeta_{2^\infty})$.

\end{rem}

\subsection{Example: $Y^2 +XY + Y = X^3-X^2-91X-310$}

So far we have only considered examples where the constant $C_E(a,f)$ either does not vanish, or vanishes because there is a condition at some prime $\ell$ which cannot be satisfied. Another interesting possibility is when all $\delta_\ell$ are non-zero, yet the constant $C_E(a,f)$ still vanishes. This occurs if and only if the entanglement correction factor $\mathfrak{C}_E(a,f)$ vanishes and its expression as a product of local correction factors makes it easy to determine when this happens. The entanglement correction factor being zero means there is an obstruction coming from the entanglement fields which prevent there being infinitely many primes $p$ satisfying the imposed conditions. We will now analyse an example when this occurs.

Consider the elliptic curve  $E$ over $\mathbb{Q}$ given by Weierstrass equation $Y^2 +XY + Y = X^3-X^2-91X-310$. The discriminant of our Weierstrass model is $\Delta = 17$. This curve has one rational torsion point of order $2$ and $\mathbb{Q}(E[2]) = \mathbb{Q}(\sqrt{17})$. In fact, machine computation shows that $m = 34$, where $m$ is the square-free part of $m_E$, and
$$
G(34) = \{(g_2,g_{17}) \in G(2)\times \GL_2(\mathbb{Z}/17\mathbb{Z}) : \varepsilon(g_2) = \theta_{17}\circ \det(g_{17}) \}
$$
where as usual $\varepsilon$ denotes the signature map and $\theta_{17}: (\mathbb{Z}/17\mathbb{Z})^* \rightarrow \{\pm1\}$ denotes the unique quadratic character of $(\mathbb{Z}/17\mathbb{Z})^*$.

If we let $D$ denote the discriminant of $\mathbb{Q}(\sqrt{\Delta})$, then $D=17 \equiv 1 \pmod{4}$, hence by a similar argument to Lemma \ref{deltaEll} we obtain that
$$
\prod_\ell \delta_\ell = \frac{1}{2}\frac{1}{\phi(f)} \prod_{\substack{\ell \mid (a-1,f) \\ \ell \neq 2}}\bigg(1- \frac{1}{\ell(\ell-1)(\ell+1)} \bigg) \prod_{\substack{\ell \nmid f \\ \ell \neq 2}} \bigg(1- \frac{1}{(\ell^2-\ell)(\ell^2-1)} \bigg)
$$
which is non-zero for all $a$ and $f$. By Theorem \ref{ThmCorrectionFactorProg} we have that
$$
C_E(a,f) = \mathfrak{C}_E(a,f) \frac{1}{2}\frac{1}{\phi(f)} \prod_{\substack{\ell \mid (a-1,f) \\ \ell \neq 2}}\bigg(1- \frac{1}{\ell(\ell-1)(\ell+1)} \bigg) \prod_{\substack{\ell \nmid f \\ \ell \neq 2}} \bigg(1- \frac{1}{(\ell^2-\ell)(\ell^2-1)} \bigg)
$$
with
$$
\mathfrak{E}_E(a,f) = 1 + \prod_{\ell \mid 34} E_\ell.
$$
We conclude then the following.
\begin{pro}

For the above elliptic curve we have that $C_E(a,f) = 0$ if and only if $17 \mid f$ and $a$ is a quadratic residue modulo $17$.
\end{pro}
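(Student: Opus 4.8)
The plan is to reduce the vanishing of $C_E(a,f)$ to that of the entanglement correction factor and then evaluate the latter explicitly. Since the explicit Euler product appearing in the formula for $C_E(a,f)$ is nonzero for all $a$ and $f$ (as noted in the text), we have $C_E(a,f)=0$ if and only if $\mathfrak{C}_E(a,f)=0$. Because the only entanglement of $E$ among coprime torsion fields is $\mathbb{Q}(E[2])\cap\mathbb{Q}(E[17])=\mathbb{Q}(\sqrt{17})$, and since both $2\mid m$ and $17\mid m$ for every choice of $(a,f)$, the group $\Phi_m$ is isomorphic to $\{\pm1\}$ and carries a single nontrivial character $\tilde\chi$. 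By Theorem \ref{ThmCorrectionFactorProg} this gives $\mathfrak{C}_E(a,f)=1+E_2 E_{17}$, where $\chi=\chi_2\chi_{17}$ with $\chi_2=\varepsilon$ and $\chi_{17}=\theta_{17}\circ\det$, each viewed on the relevant prime-power component of $\mathcal{G}_m$ via reduction. The task thus becomes the computation of the two local averages $E_2$ and $E_{17}$.

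First I would show that $E_2=-1$ for every $(a,f)$. The point specific to this curve is that $\mathbb{Q}(E[2])=\mathbb{Q}(\sqrt{17})$ has degree $2$, so $G(2)\cong\mathbb{Z}/2\mathbb{Z}$ is generated by a transposition $\tau\in\GL_2(\mathbb{Z}/2\mathbb{Z})\cong S_3$ with $\varepsilon(\tau)=-1$. If $2\nmid f$ then $S(2)=G(2)-\{1\}=\{\tau\}$, so $E_2=\varepsilon(\tau)=-1$. If $2\mid f$ then every $A\in S(2)=G(2^{e_2})\cap\Psi_a(2^{e_2})$ satisfies $A\not\equiv I\pmod 2$, which in the order-two group $G(2)$ forces $A\bmod 2=\tau$; hence $\chi_2(A)=\varepsilon(\tau)=-1$ is constant on $S(2)$ and $E_2=-1$ again. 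Nonemptiness of $S(2)$ here follows from the linear disjointness of $\mathbb{Q}(\sqrt{17})$ and $\mathbb{Q}(\zeta_{2^{e_2}})$ (the latter contains only $\mathbb{Q}(i),\mathbb{Q}(\sqrt2),\mathbb{Q}(\sqrt{-2})$ as quadratic subfields), which makes $(\varepsilon,\det)\colon G(2^{e_2})\to\{\pm1\}\times(\mathbb{Z}/2^{e_2}\mathbb{Z})^\times$ surjective.

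Next I would compute $E_{17}$ in the two cases, which is where the stated dichotomy originates. If $17\nmid f$ then $S(17)=G(17)-\{1\}=\GL_2(\mathbb{Z}/17\mathbb{Z})-\{I\}$; since $\chi_{17}=\theta_{17}\circ\det$ is a nontrivial character of $\GL_2(\mathbb{Z}/17\mathbb{Z})$, its sum over the full group vanishes, so $\sum_{x\in S(17)}\chi_{17}(x)=-\chi_{17}(I)=-1$ and $E_{17}=-1/\big((17^2-1)(17^2-17)-1\big)$, a small nonzero number different from $1$. If instead $17\mid f$ then every $A\in S(17)$ has $\det A\equiv a\pmod{17}$, so $\chi_{17}(A)=\theta_{17}(a)=\legendre{a}{17}$ is constant on $S(17)$ and $E_{17}=\legendre{a}{17}$; here $(a,f)=1$ guarantees $17\nmid a$ and $S(17)\neq\emptyset$. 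Combining with $E_2=-1$ gives $\mathfrak{C}_E(a,f)=1-E_{17}$, which vanishes exactly when $17\mid f$ and $\legendre{a}{17}=1$, that is, when $17\mid f$ and $a$ is a quadratic residue modulo $17$; in the case $17\nmid f$ one has $1-E_{17}>1\neq 0$. This is the claimed equivalence.

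The step I expect to be the main obstacle is not the final arithmetic but the bookkeeping of the characters $\chi_2$ and $\chi_{17}$ across prime-power levels when $2\mid f$ or $17\mid f$: one must verify that imposing the determinant (Frobenius-in-$a$) condition introduces no spurious cyclotomic entanglement that alters $\chi_2$, and that $\chi_2$ genuinely factors through reduction modulo $2$. Both are controlled by the single fact that $\mathbb{Q}(\sqrt{17})$ is linearly disjoint from every cyclotomic $2$-power field and coincides with $\mathbb{Q}(E[2])$, together with Lemma \ref{squareFreeM}, which ensures that passing from $m$ to any coprime multiple leaves $\Phi_m$ and the local factors unchanged.
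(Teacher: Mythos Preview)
Your proof is correct and follows essentially the same approach as the paper: reduce to $\mathfrak{C}_E(a,f)=1+E_2E_{17}=0$, show $E_2=-1$ in all cases, and compute $E_{17}$ according to whether $17\mid f$. The only difference is cosmetic: the paper dispatches $E_2=-1$ by invoking ``the same argument as in Lemma \ref{OneModFour}'', whereas you give the direct argument using $|G(2)|=2$ so that the unique nontrivial element of $G(2)$ is an odd permutation; both routes rest on $\chi_2$ factoring through $G(2)$ and on $\mathbb{Q}(\sqrt{17})\cap\mathbb{Q}(\zeta_{2^{e_2}})=\mathbb{Q}$.
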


\begin{proof}
The naive density $\prod_\ell \delta_\ell$ is non-vanishing, hence $C_E(a,f) = 0$ if and only if $\mathfrak{C}_E(a,f) = 0$. Using the same argument as in Lemma \ref{OneModFour}, we deduce $E_2 = -1$ for all $a,f$. We have then that
$$
\mathfrak{C}_E(a,f) = 0 \Longleftrightarrow E_{17} = 1.
$$
If $17\nmid f$ then $E_{17} = -1/78335$. If $17\mid f $ then $E_{17} = \blegendre{a}{17}$ and the conclusion follows.
\end{proof}

\begin{rem}
Note that if $17 \mid f$ and $a$ is a quadratic residue mod $17$, then for any prime $p \equiv a \pmod f$ we have that $p$ splits in $\mathbb{Q}(\sqrt{17}) = \mathbb{Q}(E[2])$, so $\Frob_p$ would not satisfy the condition at the prime $2$. The obstruction to the existence of infinitely many primes $p$ such that $\tilde{E}(\mathbb{F}_p)$ is cyclic and $p \equiv a \pmod{f}$ is precisely the entanglement between the $2$ and $17$ torsion fields. The above proposition is saying that this the only obstruction that exists.
\end{rem}

\section{Koblitz's conjecture}\label{KobConj}
In \cite{kob}, N. Koblitz made a conjecture on the asymptotic behaviour of the number of primes $p$ for which the cardinality of the group $\tilde{E}(\mathbb{F}_p)$ is prime. In this section we use our character sum method to give a description of the constants appearing in this asymptotic. 

\begin{conj}[Koblitz]
Let $E/\mathbb{Q}$ be a non-CM curve and let $\Delta$ be the discriminant of any Weierstrass model of $E$ over $\mathbb{Q}$. Suppose that $E$ is not $\mathbb{Q}$-isogenous to a curve with non-trivial $\mathbb{Q}$-torsion. Then 
$$
|\{\textrm{primes } p\leqslant x : p\nmid \Delta, |\tilde{E}(\mathbb{F}_p)|\textrm{ is prime} \} | \sim C_E \frac{x}{(\log x)^2}
$$
as $x\rightarrow \infty$ where $C_E$ is an explicit positive constant. 
\end{conj}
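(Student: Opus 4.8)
This is Koblitz's conjecture, which is open; the plan below explains the sieve framework in which it naturally sits, indicates how the character-sum apparatus of this paper supplies the constant $C_E$, and identifies the essential barrier. Write $N_p := |\tilde{E}(\mathbb{F}_p)| = p+1-a_p$, so that by Hasse $N_p$ is an integer of size $p + O(\sqrt{p})$. The first step is to recognise that, for a prime $\ell$, the divisibility $\ell \mid N_p$ is a Frobenius condition: since $N_p \equiv \det(I - \rho_\ell(\Frob_p)) \pmod{\ell}$, one has $\ell \mid N_p$ exactly when $\rho_\ell(\Frob_p)$ lies in the subset $T(\ell) := \{A \in G(\ell) : \det(A - I) \equiv 0 \pmod{\ell}\}$, i.e. when $\Frob_p$ has eigenvalue $1$ on $E[\ell]$. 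Thus, after discarding $p \mid \Delta$ and finitely many small exceptions, ``$N_p$ is prime'' is the assertion that $\rho_\ell(\Frob_p) \notin T(\ell)$ for every prime $\ell \leqslant \sqrt{N_p}$. This is precisely a sieve problem whose sifting conditions are governed by the torsion fields $\mathbb{Q}(E[\ell])$.

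First I would set up the sieve. By inclusion--exclusion the count is a sum over squarefree $n$ of $\mu(n)$ times the number of $p \leqslant x$ with $n \mid N_p$, and by the Chebotarev density theorem the density of such $p$ equals $|T(n) \cap G(n)|/|G(n)|$, where $T(n) = \prod_{\ell \mid n} T(\ell)$ under the inclusion $G(n) \hookrightarrow \prod_{\ell \mid n} G(\ell)$. Here the entanglement analysis of Sections \ref{AbEntang}--\ref{CurvesAbEntang} enters directly: for $E$ with abelian entanglements, Theorem \ref{ThmGoodFrobeniusCalc} evaluates each $|T(n) \cap G(n)|/|G(n)|$ as the naive product $\prod_{\ell \mid n} |T(\ell)|/|G(\ell)|$ corrected by the character sum over $\widehat{\Phi}_n$. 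Assembling the main term, the double logarithm emerges as expected: one factor $x/\log x$ counts the primes $p$, and a second factor $1/\log x$ is the sieve's ``probability'' that a number of size $\asymp p$ survives the sifting, producing a main term of shape $C_E\, x/(\log x)^2$ in which $C_E$ is an Euler product whose $\ell$-factor compares the true local survival probability $1 - |T(\ell)|/|G(\ell)|$ against the naive $1 - 1/\ell$, carrying the same entanglement correction $\mathfrak{C}_E$ supplied by Theorem \ref{ThmGoodFrobeniusCalc}. Checking that this product converges and that its value agrees with the constant produced by the character-sum method is the routine part.

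The hard part is twofold and, by present methods, insurmountable. The first difficulty is the level of distribution: controlling $\sum_{n \leqslant D} \bigl|\pi(x;n) - \tfrac{|T(n) \cap G(n)|}{|G(n)|}\mathrm{Li}(x)\bigr|$ for $D$ as large as a power of $x$ demands error terms in Chebotarev's theorem for the fields $\mathbb{Q}(E[n])$ that are uniform in $n$; this is exactly where GRH for the Dedekind zeta functions of the division fields would be invoked, as in the conditional work of Miri--Murty, Steuding--Weng, Cojocaru, and Zywina, and it suffices to obtain an \emph{upper} bound of the correct order $O(x/(\log x)^2)$. The second, fatal, difficulty is the parity problem of sieve theory: a sieve cannot by itself distinguish integers with an even number of prime factors from those with an odd number, and so cannot isolate the values $N_p$ that are prime (exactly one prime factor) so as to furnish an asymptotic \emph{lower} bound. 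This is the identical obstruction that keeps the twin-prime and Hardy--Littlewood conjectures open, and Koblitz's conjecture is of exactly this type. I would therefore not expect the asymptotic itself to be provable here; the realistic deliverable is the conditional upper bound of the right order together with the identification of the constant, which is precisely what the character-sum description of $C_E$ contributes.
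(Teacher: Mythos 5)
You were asked about a statement that is itself an open conjecture, and the paper does not prove it either: Section \ref{KobConj} explicitly notes that for this problem ``there are not even conditional results,'' and what the paper actually supplies is a heuristic derivation of the conjectural constant, in Zywina's refined form (Conjecture \ref{RefinedConj}). So your decision to decline a proof and instead isolate the constant and the obstructions is the right call, and your local analysis is correct: $\ell \mid N_p$ if and only if $\det(I-\rho_\ell(\Frob_p))\equiv 0 \pmod{\ell}$, so the survival density at $\ell$ is $1-|T(\ell)|/|G(\ell)|$, which matches the paper's $\delta_\ell = |S_t(\ell)|/|G(\ell)|$ with $t=1$, since $S_1(\ell)$ is exactly the complement of your $T(\ell)$ in $G(\ell)$. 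Where you genuinely differ from the paper is the heuristic scaffolding: you frame the count as a sieve, with inclusion--exclusion over squarefree $n$, Chebotarev densities $|T(n)\cap G(n)|/|G(n)|$ evaluated by Theorem \ref{ThmGoodFrobeniusCalc}, a GRH-quality level of distribution for the upper bound, and the parity problem as the fatal barrier to the asymptotic; the paper instead follows Koblitz and Zywina in a Cram\'er-model heuristic, assigning $|\tilde{E}(\mathbb{F}_p)|/t$ the ``probability'' $1/(\log(p+1)-\log t)$ of being prime, multiplied by the correction $\prod_\ell \delta_\ell/(1-1/\ell)$, with all entanglements absorbed in a single application of Theorem \ref{ThmGoodFrobeniusCalc} at a modulus $m$ built from $m_E$ and $t$, yielding $C_{E,t}=\mathfrak{C}_{E,t}\prod_\ell \delta_\ell/(1-1/\ell)$ as in (\ref{KoblitzEntanglements}). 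The two routes produce the same constant; your sieve framing makes precise what a proof would require and why only a conditional upper bound is realistic, while the paper's probabilistic framing is lighter, directly motivates the normalisation by $1-1/\ell$ and the $x/(\log x)^2$ shape, and handles general $t$, where the conditions live at the non-squarefree level $\ell^{e_\ell+1}$ for $\ell\mid t$ --- a case your squarefree inclusion--exclusion would need modifying to accommodate. One caution for your writeup: as the paper stresses following Zywina, Koblitz's original constant is not always correct, so the constant must be the entanglement-corrected one; your $\mathfrak{C}_E$ obtained from the character sum does exactly this, and the positivity claimed in the statement is tied to the hypothesis that $E$ is not $\mathbb{Q}$-isogenous to a curve with non-trivial $\mathbb{Q}$-torsion, a point your proposal leaves unaddressed.
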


In \cite{ZywKob}, Zywina shows that the description of the constant $C_E$ given by Koblitz is not always correct, and he gives a corrected description of the constant along with providing several interesting examples and numerical evidence for the refined conjecture. In particular the constant described by Zywina is not necessarily positive. The reason the original constant is not always correct is that it does not take into account that divisibility conditions modulo distinct primes need not be independent. Put another way, it could occur that there are non-trivial intersections between distinct $\ell$-power torsion fields of $E$. The following is the refined Koblitz conjecture given by Zywina, which here we state restricted to non-CM curves over $\mathbb{Q}$. 

\begin{conj}\label{RefinedConj}
Let $E/\mathbb{Q}$ be a non-CM elliptic curve of discriminant $\Delta$, and let $t$ be a positive integer. Then there is an explicit constant $C_{E,t}\geqslant 0$ such that
$$
|\{\textrm{primes } p\leqslant x : p\nmid \Delta, |\tilde{E}(\mathbb{F}_p)|/t\textrm{ is prime} \} | \sim C_{E,t} \frac{x}{(\log x)^2}
$$
as $x\rightarrow \infty$. 
\end{conj}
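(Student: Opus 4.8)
The plan is to set this up as a sieve problem and to cleanly separate the two assertions in the statement: the existence of an \emph{explicit nonnegative} constant $C_{E,t}$, which the machinery developed here delivers, and the \emph{asymptotic} $\sim C_{E,t}\,x/(\log x)^2$, which is the genuinely hard (and, as remarked above, open) half. First I would translate the arithmetic condition into Frobenius data. Writing $n_p := |\tilde{E}(\mathbb{F}_p)| = p+1-a_p$, one has $\ell \mid n_p$ precisely when $\det(\rho_\ell(\Frob_p) - I) \equiv 0 \pmod{\ell}$, i.e. when $\rho_\ell(\Frob_p)$ has eigenvalue $1$. The requirement that $n_p/t$ be prime is then the conjunction of a divisibility condition $t \mid n_p$ (a congruence condition on $\Frob_p$ modulo the prime powers dividing $t$) with the condition that $n_p/t$ have no prime factor below $\sqrt{n_p/t}$. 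The natural device is an Eratosthenes--Legendre sieve applied to the sequence $\{n_p/t : p \leqslant x\}$: for each prime $\ell \nmid t$ set $S(\ell) := \{g \in G(\ell) : \det(g-I) \not\equiv 0\}$, suitably modified at $\ell \mid t$ to encode $t \mid n_p$, so that the surviving Frobenius classes are exactly those with the correct local behaviour, and the naive density of primes passing all local conditions is $\prod_\ell \delta_\ell$ with $\delta_\ell = |S(\ell)|/|G(\ell^{\alpha})|$.

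Next I would pass from the naive product to the true constant using the character-sum machinery. For $E$ with abelian entanglements, Theorem \ref{ThmGoodFrobeniusCalc} applies verbatim to these sets $S(\ell)$ and expresses $|\mathcal{S}_m \cap G(m)|/|G(m)|$ as $\prod_\ell \delta_\ell$ times an entanglement correction $\mathfrak{C}_{E,t} = 1 + \sum_{\tilde{\chi}\neq 1}\prod_{\ell\mid m} E_{\chi,\ell}$. Taking the limit over increasing sieve levels, exactly as in the cyclic-reduction case of Section \ref{CyclicRedEC} where convergence is guaranteed by the $1+\mathcal{O}(\ell^{-4})$ behaviour of the generic local factors, defines $C_{E,t}$ as an explicit Euler product times a finite character-sum correction. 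Nonnegativity $C_{E,t}\geqslant 0$ is then immediate: $C_{E,t}$ is a limit of the quantities $|\mathcal{S}_{md}\cap G(md)|/|G(md)|$, each of which is a ratio of cardinalities and hence $\geqslant 0$. This discharges the ``explicit constant $C_{E,t}\geqslant 0$'' part of the statement and, by inspection of the local factors, should match the constant of Zywina in \cite{ZywKob}.

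The asymptotic itself is where the argument cannot be completed with present technology, and I expect this to be the decisive obstacle rather than a step to be pushed through. After the sieve decomposition, a Selberg or Brun upper-bound sieve yields $N(x) \ll C_{E,t}\,x/(\log x)^2$ of the correct order, using effective Chebotarev bounds for the densities of the local conditions at each $\ell$ together with the usual truncation of the sieve level. The matching lower bound --- equivalently, the production of infinitely many $p$ with $n_p/t$ prime --- founders on two intertwined difficulties. One needs a level-of-distribution statement of Bombieri--Vinogradov type for the traces $a_p$, uniform across the sieving moduli $\ell$, that is far stronger than anything presently known even under GRH; and one must overcome the parity obstruction intrinsic to detecting primes in a thin sequence. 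This is the exact analogue of the twin-prime barrier: in the cyclic-reduction problem GRH suffices because a single non-splitting condition is accessible to an inclusion--exclusion argument whose error terms can be summed, whereas here the prime-value condition on $n_p/t$ demands a genuine prime-detection lower bound. Consequently the most one can rigorously establish by this route is the explicit formula for $C_{E,t}$ together with the correct-order upper bound, which is precisely why the displayed asymptotic is stated as a conjecture.
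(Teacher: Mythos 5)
You are right that the asymptotic itself is not provable by current methods, and in fact the paper never attempts it: the statement is Zywina's refined Koblitz conjecture, and Section \ref{KobConj} offers only a Cram\'er-model heuristic together with a character-sum evaluation of the constant. Your overall framing --- translate ``$|\tilde{E}(\mathbb{F}_p)|/t$ prime'' into Frobenius conditions via $\det(I-\rho_{\ell}(\Frob_p))$, define local sets $S_t(\ell)$, feed them into Theorem \ref{ThmGoodFrobeniusCalc}, and concede the prime-detection lower bound as the open half --- is the same as the paper's, and your remarks on parity and level of distribution are reasonable commentary beyond what the paper says.

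There is, however, a genuine defect in your construction of the constant: the limit you define is zero. For generic $\ell$ (where $G(\ell)=\GL_2(\mathbb{Z}/\ell\mathbb{Z})$ and $\ell\nmid t$) one computes $|S_t(\ell)^c| = \ell^2+(\ell-2)(\ell^2+\ell)$, so
$$
\delta_\ell \;=\; 1-\frac{\ell^2+(\ell-2)(\ell^2+\ell)}{(\ell^2-\ell)(\ell^2-1)} \;=\; 1-\frac{1}{\ell}+\mathcal{O}(\ell^{-2}),
$$
and hence $\prod_{\ell\leqslant z}\delta_\ell \to 0$ like $1/\log z$. This is \emph{not} the $1+\mathcal{O}(\ell^{-4})$ situation of the cyclic-reduction problem, where the local condition excludes only a single element of $G(\ell)$; here the excluded set has density about $1/\ell$, so the quantities $|\mathcal{S}_{md}\cap G(md)|/|G(md)|$ tend to $0$ as $d\to\infty$, your ``nonnegativity is immediate'' is vacuously true of the zero constant, and nothing can match Zywina. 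The step you omitted --- and it is the step that makes the heuristic coherent --- is the Mertens-type normalization dictated by the Cram\'er factor $1/\log(n_p/t)$: since $1/\log n$ already encodes the average density of primes, only the \emph{ratio} of the local density of the sequence $\{n_p/t\}$ to the local density $1-1/\ell$ of random integers invertible modulo $\ell$ may enter the singular series. The paper accordingly sets
$$
C_{E,t} \;=\; \Big(\prod_{\ell\mid m}\frac{1}{1-1/\ell}\Big)\cdot\frac{|\mathcal{S}_m\cap G(m)|}{|G(m)|}\cdot\prod_{\ell\nmid m}\frac{\delta_\ell}{1-1/\ell},
$$
where the normalized factors are $1+\mathcal{O}(\ell^{-2})$ (not $\mathcal{O}(\ell^{-4})$), and only then applies Theorem \ref{ThmGoodFrobeniusCalc} to extract $C_{E,t}=\mathfrak{C}_{E,t}\prod_\ell \delta_\ell/(1-1/\ell)$. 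A second, smaller imprecision: at $\ell\mid t$ the condition to encode is not merely $t\mid n_p$ but also that $n_p/t$ be invertible modulo $\ell$, which forces working at level $\ell^{e_\ell+1}$, as in the paper's $S_t(\ell)=G(\ell^{e_\ell+1})\cap\Psi_t(\ell^{e_\ell+1})$; your ``suitably modified'' glosses over exactly the part of the definition that determines the local factor at those primes.
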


The condition on $p$ that $|\tilde{E}(\mathbb{F}_p)|/t$ be prime can be given as a splitting condition in the various $\ell$-torsion fields, so the character sum method we have developed again seems well suited to compute $C_{E,t}$.  In his paper Zywina computes the constants $C_{E,t}$ via a different method than the one we use here, both in the CM and non-CM cases. Here we will restrict ourselves to non-CM curves with abelian entanglements over the rationals. 

For each prime power $\ell^\alpha$, define
$$
\Psi_t(\ell^\alpha) := \left \{ A \in \GL_2(\mathbb{Z}/\ell^\alpha\mathbb{Z}) :\ \det{(I-A)} \in t \cdot \big (\mathbb{Z}/\ell^\alpha\mathbb{Z}\big)^\times \right \}.
$$
For a prime $p \nmid N_E \ell$ note that $\tilde{E}(\mathbb{F}_p)/t$ is invertible modulo $\ell^\alpha/(\ell^\alpha,t)$ if and only if $\rho_{\ell^\alpha}(\Frob_p) \in G(\ell^\alpha) \cap \Psi_t(\ell^\alpha)$. Suppose that $t$ has prime factorisation $t = \prod_\ell \ell^{e_\ell}$. With this in mind, define the set of `good' Frobenius elements to be
$$
S_t(\ell) = 
\begin{cases}
G(\ell^{e_\ell+1}) \cap \Psi_t(\ell^{e_\ell+1}) & \text{ if $\ell \mid t$ } \\
G(\ell) \cap \Psi_t(\ell) & \text{ if $\ell \nmid t$ }.
\end{cases}
$$

\begin{comment}
We have then that $\tilde{E}(\mathbb{F}_p)/t$ is prime if and only if for every $\ell \nmid p$ the following holds:
\begin{itemize}
\item[(i)]
$\rho_\ell (\Frob_p) \in S(\ell)$ if $\ell \nmid t$,
\item[(ii)]
$\rho_{\ell^{e_\ell+1}}(\Frob_p) \in S(\ell)$ if $\ell \mid t$.
\end{itemize}
To see this, note first that that $\tilde{E}(\mathbb{F}_p)/t$ is an integer if and only if $\rho_{\ell^{e_\ell}}(\Frob_p) \in G(\ell^{e_\ell}) \cap \Psi_t(\ell^{e_\ell})$ for all $\ell \mid t$. In addition, $\tilde{E}(\mathbb{F}_p)/t$ is a prime number if and only if it is also invertible modulo all primes $\ell < \tilde{E}(\mathbb{F}_p)/t$.
\end{comment}

We now give a description of the constant $C_{E,t}$ in terms of our sets $S_t(\ell)$ as well as a crude heuristic of justifying it. This heuristic follows the same lines as that of Koblitz and Zywina. The key argument relies on the Cramer's model which asserts that, roughly speaking, the primes behave as if every random integer $n$ is prime with probability $1/\log n$. If the sequence $\{ |\tilde{E}(\mathbb{F}_p)|/t\}_{p \nmid N_E}$ were assumed to behave like random integers, then the proability that $|\tilde{E}(\mathbb{F}_p)|/t$ is prime would be
$$
\frac{1}{\log\big(|\tilde{E}(\mathbb{F}_p)|/t\big)} \approx \frac{1}{\log(p+1) - \log t}.
$$
The last approximation uses the fact that by Hasse's bound, $\tilde{E}(\mathbb{F}_p)$ is close to $p+1$. 

It is not true however, that the $|\tilde{E}(\mathbb{F}_p)|/t$ behave like random integers with respect to congruences, and in order to get a better approximation we need to take these congruences into account. If we fix a prime $\ell$, then for all but finitely many $p$. if $|\tilde{E}(\mathbb{F}_p)|/t$ is prime then it is invertible modulo $\ell$. If $\ell$ does not divide $t$, then by Chebotarev, the density of primes $p\nmid N_E$ such that $\tilde{E}(\mathbb{F}_p)/t$ is invertible modulo $\ell$ is $|S_t(\ell)|/|G(\ell)|$. If $\ell \mid t$, then similarly the density of primes $p\nmid N_E$ such that $\tilde{E}(\mathbb{F}_p)$ is divisible by $\ell^{e_\ell}$ \emph{and} $\tilde{E}(\mathbb{F}_p)/t$ is invertible modulo $\ell$ equals $|S_t(\ell)|/|G(\ell^{e_\ell+1})|$. Meanwhile the density of natural numbers that are invertible mod $\ell$ is $(1-1/\ell)$. If we let $d$ be a square-free integer coprime to $t$, then 
$$
\prod_{\ell \mid td} \frac{1}{1-1/\ell}\prod_{\ell\mid t} \frac{|S_t(\ell)|}{|G(\ell^{e_\ell+1})|}  \prod_{\ell \mid d} \frac{|S_t(\ell)|}{|G(\ell)|} \cdot \frac{1}{\log(p+1) - \log t}
$$
should constitute a better approximation to the probability that $|\tilde{E}(\mathbb{F}_p)|/t$ is prime, as it takes into account the congruences modulo all primes $\ell \mid td$. Taking into account all congruences amounts to letting $d$ tend to infinity, hence this model suggests that for a randomly chosen $p$, $|\tilde{E}(\mathbb{F}_p)|/t$ is prime with probability
$$
\prod_\ell \frac{\delta_\ell}{1-1/\ell} \cdot \frac{1}{\log(p+1) - \log t}
$$
where
$$
\delta_\ell = 
\begin{cases}
|S_t(\ell)|/|G(\ell)| & \text{ if $\ell \nmid t$ } \\
|S_t(\ell)|/|G(\ell^{e_\ell+1})| & \text{ if $\ell \mid t$ }. \\
\end{cases}
$$

This is the constant that was given by Koblitz with $t=1$ and later refined by Zywina. The problem that still remained with the approximation given by Koblitz, is that while it does take into account congruences modulo $\ell$, is assumes that divisibility conditions modulo distinct primes are independent. In order to deal with this we take a similar approach as in the previous sections. That is, we let 
$$
m:= \prod_{\ell \mid t} \ell^{e_\ell+1} \prod_{\substack{\ell \mid m_E \\ \ell \nmid t}} \ell
$$
and for each square-free $d$ coprime to $m$, let
$$
\mathcal{S}_{md} := \prod_{\ell \mid md} S_t(\ell), \quad \mathcal{G}_{md} := \prod_{\ell \mid t} G(\ell^{e_\ell+1}) \prod_{\substack{\ell \mid md \\ \ell \nmid t}} G(\ell).
$$
By Corollary \ref{CorEquivEntang}
$$
G(md) \leqslant \mathcal{G}_{md}
$$
has abelian entanglements, hence we have an exact sequence
$$
1 \longrightarrow G(md) \longrightarrow \mathcal{G}_{md} \xrightarrow{\phantom{a}\psi_{md}\phantom{b}} \Phi_{md} \longrightarrow 1
$$
for some abelian group $\Phi_{md}$. By (\ref{ConstantChi}) we have that $\Phi_{md} \simeq \Phi_m$ for any square-free $d$ coprime to $m$. Note now that $|\mathcal{S}_{md}\cap G(md)|/|G(md)|$ is the density of $p$ for which $|\tilde{E}(\mathbb{F}_p)|/t$ is an integer and invertible modulo $md$, hence by letting $d$ tend to infinity over the square free integers coprime to $m$, the refined constant is
\begin{align*}
C_{E,t} &= \lim_{d\rightarrow \infty} \frac{|\mathcal{S}_{md}\cap G(md)|/|G(md)|}{1-1/\ell} \\
			 &= \left( \prod_{\ell\mid m} \frac{1}{1-1/\ell} \right ) \cdot \frac{|\mathcal{S}_m\cap G(m)|}{|G(m)|} \prod_{\ell \nmid m} \frac{\delta_\ell}{1-1/\ell}.
\end{align*}
It follows by the prime number theorem that the expected number of primes $p$ such that $|\tilde{E}(\mathbb{F}_p)|/t$ is prime is asymptotic to $C_{E,t}\cdot  x/(\log x)^2$.

Applying Theorem \ref{ThmGoodFrobeniusCalc} with $m$ defined as above we obtain
\begin{equation}\label{KoblitzEntanglements}
C_{E,t} = \mathfrak{C}_{E,t} \prod_\ell \frac{\delta_\ell}{1-1/\ell} 
\end{equation}
where the entanglement correction factor $\mathfrak{C}_{E,t}$ is given by
$$
\mathfrak{C}_{E,t} = 1+\sum_{\tilde{\chi}\in \widehat{\Phi}_m-\{1\}}\prod_{\ell | m} E_{\chi,\ell}.
$$

\subsection{Serre curves}
In this section we compute the constants $C_{E,1}$ in Conjecture \ref{RefinedConj} for Serre curves. This will amount to finding the average value of various quadratic characters on $S(\ell)$. In the case of Serre curves, the sets $S(\ell)$ are particularly easy to treat.

\begin{pro}
Let $E/\mathbb{Q}$ be a Serre curve. Let $D$ be the discriminant of $\mathbb{Q}(\sqrt{\Delta})$ where $\Delta$ is the discriminant of any Weierstrass model of $E$ over $\mathbb{Q}$. Then
$$
C_{E,1} = \mathfrak{C}_{E,1} \prod_{\ell}\bigg(1-\frac{\ell^2-\ell-1}{(\ell-1)^3(\ell+1)} \bigg )
$$
where the entanglement correction factor $\mathfrak{C}_{E,1}$ is given by
$$
\mathfrak{C}_{E,1} =
\begin{cases}
1 & \text{ if $D\equiv 0 \pmod{4}$ } \\
1+ \displaystyle{\prod_{\ell \mid D} \frac{1}{\ell^3-2\ell^2-\ell+3}} & \text{ if $D\equiv 1 \pmod{4}$ }
\end{cases}
$$
\end{pro}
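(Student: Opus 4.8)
The plan is to treat the Euler product and the correction factor $\mathfrak{C}_{E,1}$ separately, starting from (\ref{KoblitzEntanglements}) and using that for a Serre curve $G(\ell) = \GL_2(\mathbb{Z}/\ell\mathbb{Z})$ for every prime $\ell$. With $t=1$ the relevant local set is $S_1(\ell) = \{A \in \GL_2(\mathbb{Z}/\ell\mathbb{Z}) : \det(I-A) \in (\mathbb{Z}/\ell\mathbb{Z})^\times\}$, i.e. the matrices not admitting $1$ as an eigenvalue. First I would count its complement: a matrix in $\GL_2(\mathbb{F}_\ell)$ has $1$ as an eigenvalue iff it fixes a nonzero vector, and stratifying by the fixed space (the single matrix $A=I$, versus those whose fixed space is exactly one of the $\ell+1$ lines, each line contributing $\ell(\ell-1)-1$) gives $1 + (\ell+1)(\ell(\ell-1)-1) = \ell^3 - 2\ell$ such matrices. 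Hence $|S_1(\ell)| = \ell(\ell-1)^2(\ell+1) - (\ell^3-2\ell) = \ell(\ell^3 - 2\ell^2 - \ell + 3)$, and dividing by $|\GL_2(\mathbb{F}_\ell)|$ and then by $1-1/\ell$ yields the claimed factor $1 - (\ell^2-\ell-1)/((\ell-1)^3(\ell+1))$ after a routine simplification (indeed $(\ell-1)^3(\ell+1) - (\ell^2-\ell-1) = \ell(\ell^3 - 2\ell^2 - \ell + 3)$).

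For $\mathfrak{C}_{E,1}$ I would reuse the description of Serre-curve entanglements from Proposition \ref{SerreCurveFirstCase}, noting that with $t=1$ the integer $m$ of Section \ref{KobConj} is again the square-free part $\prod_{\ell \mid m_E}\ell$. If $D \equiv 0 \pmod 4$ then $m_E = |D|$ is divisible by $4$, the group $\Phi_m$ is trivial, the sum over $\widehat{\Phi}_m - \{1\}$ is empty, and $\mathfrak{C}_{E,1} = 1$. If $D \equiv 1 \pmod 4$ then $m_E = 2|D|$ is square-free, $\Phi_m \cong \{\pm 1\}$, and the unique nontrivial character is $\chi = \prod_{\ell \mid m_E}\chi_\ell$ with $\chi_2 = \varepsilon$ the signature map on $\GL_2(\mathbb{F}_2)$ and $\chi_\ell = \blegendre{\det}{\ell}$ for odd $\ell \mid D$; thus $\mathfrak{C}_{E,1} = 1 + E_{\chi,2}\prod_{\ell \mid D} E_{\chi,\ell}$.

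The substance of the proof is the evaluation of the local averages $E_{\chi,\ell} = |S_1(\ell)|^{-1}\sum_{A \in S_1(\ell)} \chi_\ell(A)$. For odd $\ell \mid D$ I would track the determinant on $S_1(\ell)$: the map $\det$ fibers $\GL_2(\mathbb{F}_\ell)$ into classes of equal size $|\SL_2(\mathbb{F}_\ell)| = \ell^3 - \ell$, while the matrices with eigenvalue $1$ and determinant $d$ form the single conjugacy class of $\mathrm{diag}(1,d)$, of size $\ell(\ell+1)$, when $d \neq 1$, and number $\ell^2$ when $d=1$ (the identity together with one unipotent class). Writing $g(d)$ for this last count and $f(d) = (\ell^3 - \ell) - g(d)$ for the number of elements of $S_1(\ell)$ of determinant $d$, the orthogonality $\sum_d \blegendre{d}{\ell} = 0$ kills the constant-in-$d$ contributions and leaves $\sum_d \blegendre{d}{\ell} f(d) = -\sum_d \blegendre{d}{\ell} g(d) = \ell$, whence $E_{\chi,\ell} = \ell/|S_1(\ell)| = 1/(\ell^3 - 2\ell^2 - \ell + 3)$. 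For $\ell = 2$ a direct inspection of $\GL_2(\mathbb{F}_2) \cong S_3$ shows that $S_1(2) = \{A : \tr A = 1\}$ is precisely the pair of order-$3$ elements, which are the $3$-cycles and hence even, so $\varepsilon$ is $+1$ on both and $E_{\chi,2} = 1$. Substituting into the formula above gives $\mathfrak{C}_{E,1} = 1 + \prod_{\ell \mid D} 1/(\ell^3 - 2\ell^2 - \ell + 3)$.

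I expect the conjugacy-class bookkeeping behind the determinant-weighted count for odd $\ell$ to be the main obstacle: one must verify that the eigenvalue-$1$ matrices of a fixed determinant $d \neq 1$ genuinely form one class of the stated size and that $d = 1$ is the only anomalous fiber. Once the counts $g(d)$ are pinned down, the cancellation forced by $\sum_d \blegendre{d}{\ell} = 0$ makes the remainder mechanical, and the special prime $\ell = 2$ is dispatched by hand.
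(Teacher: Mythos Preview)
Your proposal is correct and follows essentially the same route as the paper: both count the complement of $S_1(\ell)$ via the eigenvalue-$1$ stratification (the paper cites Lang's conjugacy-class table while you stratify by fixed line, giving the same $\ell^3-2\ell$), both compute the character average for odd $\ell\mid D$ by summing $\blegendre{\lambda}{\ell}$ over eigenvalue pairs $(1,\lambda)$ to get $\sum_{S_1(\ell)}\chi_\ell=\ell$, and both dispatch $\ell=2$ by direct inspection of the two order-$3$ matrices. The only cosmetic difference is your $f(d),g(d)$ packaging versus the paper's direct sum over conjugacy classes.
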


\begin{proof}
We begin by noting that, for Serre curves, 
$$
S_1(\ell) = \left \{ A \in \GL_2(\mathbb{Z}/\ell\mathbb{Z}) :\ \det{(I-A)} \in \big (\mathbb{Z}/\ell\mathbb{Z}\big)^\times \right \}.
$$
We have then that
\begin{align*}
\delta_\ell &= \frac{|S_1(\ell)|}{|G(\ell)|}  \\
				 &= 1-\frac{|S_1(\ell)^c|}{|\GL_2(\mathbb{Z}/\ell\mathbb{Z})|}
\end{align*}
where $S_1(\ell)^c = \left \{ A \in \GL_2(\mathbb{Z}/\ell\mathbb{Z}) :\ \det{(I-A)}=0 \right \}$. Thus $S_1(\ell)^c$ consists of those matrices whose eigenvalues are $1$ and $\lambda$ for some $\lambda \in (\mathbb{Z}/\ell\mathbb{Z})^\times$. It follows from Table 12.4 in \S 12, Chapter XVIII of \cite{LangAlgebra}, that there are $\ell^2$ elements of $\GL_2(\mathbb{Z}/\ell\mathbb{Z})$ with both eigenvalues equal to $1$, and $\ell^2+\ell$ elements with eigenvalues $1$ and $\lambda\neq 1$. We obtain then that $|S_1(\ell)^c| = \ell^2 + (\ell-2)(\ell^2+\ell)$, hence we have that
$$
\delta_\ell = 1- \frac{\ell^2 + (\ell-2)(\ell^2+\ell)}{(\ell^2-\ell)(\ell^2-1)}
$$
and a calculation yields that
$$
\frac{\delta_\ell}{1-1/\ell} = 1-\frac{\ell^2-\ell-1}{(\ell-1)^3(\ell+1)}.
$$
From (\ref{KoblitzEntanglements}) it rests only to compute $\mathfrak{C}_{E,1}$. Because $t=1$, $m$ equals the square-free part of $m_E$, and we may proceed just as in the proof of Proposition \ref{SerreCurveFirstCase}. That is, when $D\equiv 0 \pmod{4}$ then $\mathfrak{C}_{E,1} = 1$. If $ D\equiv 1 \pmod{4}$,  then for  each $\ell \mid 2D$ it suffices to compute the average value of $\chi_\ell$ on $S_1(\ell)$. 

Note that since the $\chi_\ell$ are non-trivial, then $\sum_{x\in G(\ell)} \chi_\ell (x) = 0$. For $\ell>2$ recall that $\chi_\ell = \blegendre{\det}{\ell}$, hence given an element $x\in S_1(\ell)^c$ with eigenvalues $1$ and $\lambda$, we have that $\chi_\ell(x) = \blegendre{\lambda}{\ell}$. There are an equal number of squares and non-squares in $(\mathbb{Z}/\ell\mathbb{Z})^\times$, so we conclude then 
\begin{align*}
\sum_{x\in S_1(\ell)} \chi_\ell(x) &= - \sum_{x\in S_1(\ell)^c} \chi_\ell(x) \\
													&= - \bigg( \ell^2 \blegendre{1}{\ell} + (\ell^2+\ell)\sum_{\substack{\lambda \in (\mathbb{Z}/\ell\mathbb{Z})^\times \\ \ell \neq 1}} \blegendre{\lambda}{\ell} \bigg ) \\
													&= - \big( \ell^2 - (\ell^2+\ell) \big) \\
													&= \ell.
\end{align*}
From this we obtain 
\begin{align*}
E_\ell &= \frac{\ell}{|G(\ell)| - |S_1(\ell)|} \\
		 &= \frac{\ell}{(\ell^2-\ell)(\ell^2-1) - (\ell^2+\ell)(\ell-2) - \ell^2}\\
		 &= \frac{1}{\ell^3-2\ell^2-\ell+3}.
\end{align*}
For $\ell = 2$ one can directly compute $S_1(2)$. It consists of the $2$ matrices \begin{tiny} $\begin{pmatrix} 1 & 1 \\ 1 & 0 \end{pmatrix}$\end{tiny} and \begin{tiny} $\begin{pmatrix} 0 & 1\\ 1 & 1 \end{pmatrix}$ \end{tiny}  both of which have order $3$ and hence are even permutations. Since $\chi_2$ is the signature character we conclude $E_2 = 1$, and this completes the proof.
\end{proof}

\section{Elliptic curves without abelian entanglements}\label{NonAbEntang}

As we have seen, the character sum method we have developed for the study of conjectural constants can only be applied to the class of elliptic curves with abelian entanglements. However this condition does not seem very restrictive, given that at least `most' elliptic curves over $\mathbb{Q}$ satisfy this property, and in fact it is not even clear whether or not there exists an elliptic curve which does not. In this final section we show that there does indeed exist at least one infinite family of curves which do not satisfy the abelian entanglements property. The character sum method as we have developed it cannot be applied to the curves in this family, however we will see that with some additional restrictions it still can be. 

The infinite family we exhibit is studied in \cite{BrauTorsion}. We sketch the construction here. For more details please refer to the aforementioned paper. It is parametrised by a modular curve $X_H$ of level $6$, which we now describe. Let $\mathcal{N}\subset\GL_2(\mathbb{Z}/3\mathbb{Z})$ be the subgroup generated by the set
$$
\left \{ \begin{pmatrix} 0 & 2 \\ 1 & 0 \end{pmatrix},\ \begin{pmatrix} 1 & 2 \\ 2 & 2 \end{pmatrix},\ \begin{pmatrix} 2 & 0 \\ 0 & 2 \end{pmatrix} \right \}.
$$
Then $\mathcal{N}$ is the unique index $6$ normal subgroup of $\GL_2(\mathbb{Z}/3\mathbb{Z})$, which fits into the exact sequence
$$
1 \longrightarrow \mathcal{N} \longrightarrow \GL_2(\mathbb{Z}/3\mathbb{Z}) \xrightarrow{\phantom{a}\theta\phantom{b}} \GL_2(\mathbb{Z}/2\mathbb{Z}) \longrightarrow 1
$$
for some non-canonical map $\theta:\GL(\mathbb{Z}/3\mathbb{Z}) \rightarrow \GL_2(\mathbb{Z}/2\mathbb{Z})$. Let 
$$
H:= \{ (g_2,g_3) \in \GL_2(\mathbb{Z}/2\mathbb{Z}) \times \GL_2(\mathbb{Z}/3\mathbb{Z}) : g_2 = \theta(g_3) \}  \subset \GL_2(\mathbb{Z}/6\mathbb{Z})
$$
be the graph of $\theta$, viewed as a subgroup of $\GL_2(\mathbb{Z}/6\mathbb{Z})$ via the Chinese Remainder Theorem.

It follows that for every elliptic curve $E$ over $\mathbb{Q}$,
$$
j(E) \in j(X_H(\mathbb{Q})) \; \Longleftrightarrow \; E \simeq_{\overline{\mathbb{Q}}} E' \; \text{ for some } E' \text{ over } \mathbb{Q} \text{ for which } \mathbb{Q}(E'[2]) \subset \mathbb{Q}(E'[3]).
$$
The curve $X_H$ is seen to have genus $0$ and one cusp, which must then be defined over $\mathbb{Q}$, thus we have $X_H\simeq \mathbb{P}^1$. The following theorem gives an explicit model for $X_H$. We omit the proof, which can be found in \cite{BrauTorsion}.

\begin{thm}
There is a uniformiser at the cusp
$
t : X_H \longrightarrow \mathbb{P}^1
$
with the property that
$$
j = 2^{10} 3^3 t^3 (1 - 4t^3),
$$
where $j : X_H \longrightarrow X(1) \simeq \mathbb{P}^1$ is the usual $j$-map.  
\end{thm}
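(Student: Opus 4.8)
The plan is to determine the rational map $j\colon X_H\to X(1)\simeq\mathbb{P}^1$ completely from its degree and ramification, and then to read the constants off an explicit model. First note that $-I\in H$: reducing mod $3$, $-I$ becomes $2I$, which is one of the listed generators of $\mathcal{N}=\ker\theta$, so $(-I,-I)=(\theta(-I),-I)\in H$. Since moreover $\det H=(\mathbb{Z}/6\mathbb{Z})^\times$, the degree of $j$ is $[\GL_2(\mathbb{Z}/6\mathbb{Z}):H]=288/48=6$, equivalently $[\SL_2(\mathbb{Z}/6\mathbb{Z}):H\cap\SL_2(\mathbb{Z}/6\mathbb{Z})]=144/24=6$.

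The key structural point is that $H$ has a single nontrivial overgroup. Applying Goursat's Lemma (Lemma~\ref{Goursat}) to $H\leqslant\GL_2(\mathbb{Z}/2\mathbb{Z})\times\GL_2(\mathbb{Z}/3\mathbb{Z})$ gives $N_1=H\cap(\GL_2(\mathbb{Z}/2\mathbb{Z})\times 1)=1$ and $N_2=1\times\mathcal{N}$, so the Goursat quotient is $\GL_2(\mathbb{Z}/2\mathbb{Z})\cong\GL_2(\mathbb{Z}/3\mathbb{Z})/\mathcal{N}\cong S_3$ and $H$ is the graph of the associated isomorphism. Overgroups of $H$ then correspond to normal subgroups of $S_3$, the only proper nontrivial one ($A_3$) producing a single intermediate group $K$ with $[\GL_2(\mathbb{Z}/6\mathbb{Z}):K]=2$ and $[K:H]=3$. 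Explicitly $K=\ker\chi$ for the quadratic character $\chi(g_2,g_3)=\mathrm{sgn}(g_2)\,\legendre{\det g_3}{3}$, using that $\SL_2(\mathbb{Z}/3\mathbb{Z})$ is the unique index-$2$ subgroup of $\GL_2(\mathbb{Z}/3\mathbb{Z})$. Evaluating $\chi$ on the standard inertia generators of $\SL_2(\mathbb{Z})$ over $\infty$, $1728$, $0$, namely the parabolic $T$, the order-$2$ element $S$, and the order-$3$ element $ST$ (which reduce mod $2$ to a transposition, a transposition, and a $3$-cycle), gives $\chi(T)=\chi(S)=-1$ and $\chi(ST)=+1$. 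Hence $X_K\to X(1)$ is the genus-$0$ double cover branched exactly over $\{1728,\infty\}$, so on a coordinate $s$ with the cusp at $s=\infty$ we may write $j-1728=\kappa s^2$.

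Now I would propagate this through the degree-$3$ cover $X_H\to X_K$. Over the order-$2$ point $j=1728$ all ramification indices of $X_H\to X(1)$ divide $2$; since $\chi(S)=-1$ means $X_K$ has a single, ramified point over $1728$ (the point $s=0$), the map $X_H\to X_K$ must be unramified there, giving the composite type $(2,2,2)$ over $1728$. The single cusp forces $(6)$ over $\infty$, and Riemann--Hurwitz then forces $(3,1,1,1)$ over $0$. In particular $X_H\to X_K$ is a degree-$3$ map of projective lines totally ramified over two points, so on a coordinate $t$ with the cusp at $t=\infty$ and the ramification-$3$ point above $j=0$ at $t=0$ we have $s=s_*+c't^3$ with $\kappa s_*^2=-1728$. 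Substituting,
\[
j=1728+\kappa(s_*+c't^3)^2=\kappa c'^2\,t^3\Big(t^3+\tfrac{2s_*}{c'}\Big)=c_6\,t^3(t^3+2s_0),\qquad c_6s_0^2=\kappa s_*^2=-1728,
\]
which is exactly the shape of the claimed identity (equivalently $j-1728=c_6(t^3+s_0)^2$). The residual freedom $t\mapsto\lambda t$ acts by $(c_6,s_0)\mapsto(\lambda^6c_6,\lambda^{-3}s_0)$ and preserves $c_6s_0^2$; taking $\lambda^3=-8s_0$ normalises to $(c_6,s_0)=(-2^{12}3^3,-\tfrac18)$ and yields $j=-2^{12}3^3\,t^3(t^3-\tfrac14)=2^{10}3^3\,t^3(1-4t^3)$.

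The hard part is securing these \emph{exact} constants over $\mathbb{Q}$: the normalising rescale requires $-8s_0$ to be a cube in $\mathbb{Q}^\times$ for a given rational model, which is not formal. This is where I would invoke the explicit rational family $E_t/\mathbb{Q}$ of \cite{BrauTorsion}, whose members satisfy $\mathbb{Q}(E_t[2])\subset\mathbb{Q}(E_t[3])$. Computing $j(E_t)$ directly from a Weierstrass model produces a rational function of $t$ of degree $6$; since $\deg j=6$, the classifying map $\mathbb{P}^1_t\to X_H$ has degree $1$, so $t$ is a uniformiser and simultaneously $j=j(E_t)$. The structural argument above guarantees that this function has the form $c_6t^3(t^3+2s_0)$ with $c_6s_0^2=-1728$, so only the verification of the two constants remains, completing the identification $j=2^{10}3^3\,t^3(1-4t^3)$.
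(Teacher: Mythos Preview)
The paper does not prove this statement: immediately after the theorem it writes ``We omit the proof, which can be found in \cite{BrauTorsion}.'' So there is no in-paper argument to compare your proposal against.

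That said, your argument is sound and considerably more informative than what the paper offers. The ramification analysis via the intermediate index-$2$ curve $X_K$ is correct: the identification of $K$ as the kernel of $\chi(g_2,g_3)=\varepsilon(g_2)\legendre{\det g_3}{3}$, the branching of $X_K\to X(1)$ over $\{1728,\infty\}$, and the resulting cycle types $(6)$, $(2,2,2)$, $(3,1,1,1)$ for $X_H\to X(1)$ over $\infty,1728,0$ all check out, and the deduction $j=c_6\,t^3(t^3+2s_0)$ with $c_6s_0^2=-1728$ follows cleanly. You are also right that this structural argument determines $j$ only up to the $\mathbb{G}_m$-action $t\mapsto\lambda t$, and that pinning down a $\mathbb{Q}$-rational uniformiser needs external input. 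Your proposed fix --- take the explicit one-parameter family $E_t/\mathbb{Q}$ from \cite{BrauTorsion}, compute $j(E_t)$ from its Weierstrass model, observe it has degree~$6$ in $t$ so that the classifying map $\mathbb{P}^1_t\to X_H$ is an isomorphism, and read off the constants --- is exactly what is required, and is presumably close to what \cite{BrauTorsion} actually does. In fact that final step alone already proves the theorem; your ramification argument is a pleasant addition explaining \emph{why} the $j$-map necessarily has the shape $t^3(1-4t^3)$ rather than merely verifying it.
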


Let us take one example from this family. Consider the curve $E/\mathbb{Q}$ given by minimal Weierstrass equation $Y^2 = X^3 - 63504X + 6223392$. This curve has $j(E) = -2^{10} 3^4$, as well as $\Delta = -2^4 3^{11} 7^6$. Machine computation shows that $G(\ell) = \GL_2(\mathbb{Z}/\ell\mathbb{Z})$ and $\mathbb{Q}(E[2]) \subset \mathbb{Q}(E[3])$. We also have that $\mathbb{Q}(\sqrt{\Delta}) = \mathbb{Q}(\sqrt{-3})$, which is what we expect since the maximal abelian extension inside $\mathbb{Q}(E[3])$ is precisely $\mathbb{Q}(\sqrt{-3})$. 

Suppose we wish to compute the conjectural density of primes $p$ such that $\tilde{E}(\mathbb{F}_p)$ is cyclic. As we have seen, the naive density of this is $\prod_\ell \delta_\ell$, however a correction factor is needed. As the only critical primes are $2,3$ and $7$, the density we are looking for is 
$$
C_E = \frac{|G(42)\cap \mathcal{S}_{42}|}{G(42)|} \prod_{\ell \neq 2,3,7} \delta_\ell,
$$
where we are using the notation of Section \ref{CyclicRedEC}. Now $\GL_2(\mathbb{Z}/3\mathbb{Z})$ and $\GL_2(\mathbb{Z}/7 \mathbb{Z})$ have no common simple non-abelian quotients, hence any entanglement between the fields $\mathbb{Q}(E[3])$ and $\mathbb{Q}(E[7])$ would have to contain a non-trivial abelian subfield. However the maximal abelian extensions of $\mathbb{Q}(E[3])$ and $\mathbb{Q}(E[7])$ are $\mathbb{Q}(\zeta_3)$ and $\mathbb{Q}(\zeta_7)$, hence we conclude $\mathbb{Q}(E[3]) \cap \mathbb{Q}(E[7]) = \mathbb{Q}$. This implies that $G(42) = G(6) \times G(7)$, hence
$$
C_E = \frac{|G(6)\cap \mathcal{S}_{6}|}{|G(6)|} \prod_{\ell \neq 2,3} \delta_\ell,
$$
Finally, note that because $G(6) = G(3)$ and $G(2)$ is a quotient of $G(6)$, then
$$
\frac{|G(6)\cap \mathcal{S}_{6}|}{|G(6)|} = \frac{|S(2)|}{|G(2)|}.
$$
Using machine computation we find that the observed density of primes $p\leqslant 100000000$ is $0.831069$ while our computation yields
\begin{align*}
C_E &= \prod_{\ell \neq 3} \delta_\ell \\
	   &\approx 0.831066.
\end{align*}

As mentioned in the introduction, another natural question which arises from this is whether one can one classify the triples $(E,m_1,m_2)$ with $E$ an elliptic curve over $\mathbb{Q}$ and $m_1,m_2$ a pair of coprime integers for which the entanglement field $\mathbb{Q}(E[m_1]) \cap \mathbb{Q}(E[m_2])$ is non-abelian over $\mathbb{Q}$. We are not sure if any other families exist, however one systematic way one could possibly rule out other examples is via the following steps.
\begin{itemize}
\item[(i)]
Classify the non-abelian groups which arise as common quotients of subgroups $H_{m_1}$ and $H_{m_2}$, where $H_{m_i} \subset \GL_2(\mathbb{Z}/m_i \mathbb{Z})$ and $\det(H_{m_i}) = (\mathbb{Z}/m_i\mathbb{Z})^\times$ for $i=1,2$.
\item[(ii)]
For each example in step (i), compute the genus of the associated modular curve.
\item[(iii)]
For each modular curve in step (ii), decide whether or not it has any rational points. 
\end{itemize}

For each of these families of curves it would also be of interest to find a systematic way to compute their entanglement correction factors. For the family we have described here this is easy to do because one of the torsion fields is fully contained in another one. It may occur however, at least in theory, that a curve could have many non-abelian intersections between various of its torsion fields. However it seems unlikely many examples of this type exist.

\bibliographystyle{amsalpha}
\bibliography{charSumsBib.bib}

\end{document}